\definecolor{tocolor}{rgb}{.1,.1,.1}
\definecolor{urlcolor}{rgb}{.2,.2,.6}
\definecolor{linkcolor}{rgb}{.1,.1,.5}
\definecolor{citecolor}{rgb}{.4,.2,.1}
\newcommandx{\thdef}[2]{
	\newaliascnt{#1}{theorem}  
	\newtheorem{#1}[#1]{#2}
	\aliascntresetthe{#1}  
	\newtheorem*{#1*}{#2}
	\expandafter\newcommand\expandafter{\csname #1autorefname\endcsname}{#2}
}
\newtheorem*{rep@theorem}{\rep@title}
\newcommand{\newreptheorem}[2]{%
\newenvironment{rep#1}[1]{%
 \def\rep@title{#2 \ref{##1}}%
 \begin{rep@theorem}}%
 {\end{rep@theorem}}}
\newtheorem{theorem}{Theorem}[section]
\theoremstyle{definition}
\theoremstyle{remark}
\theoremstyle{remark}
\newcommand{\defn}[1]{{\bf\emph{#1}}} 
\newcommand{\set}[2]{\left\{#1\,\middle |\ #2\right\}}
\newcommand{\rbrac}[1]{\left(#1\right)} 
\newcommand{\sbrac}[1]{\left[#1\right]} 
\newcommand{\cbrac}[1]{\left\{#1\right\}} 
\newcommand{\abrac}[1]{\left\langle#1\right\rangle} 
\newcommandx{\liebrac}[2][1=]{\sbrac{#2}_{\shf{#1}}} 
\newcommandx{\courbrac}[2][1=]{\sbrac{#2}_{\gtshf{#1}}} 
\newcommandx{\inprod}[2][1=]{\abrac{#2}_{\spc{#1}}} 
\newcommand{\mapdef}[5]{
	\begin{array}{ccccc}
	#1 &:& #2 &\to& #3 \\
		&&  #4 &\mapsto& #5
	\end{array}
}
\newcommandx{\fn}[2][2=]{#1\ifthenelse{\equal{#2}{}}{}{\!\rbrac{{#2}}}} 
\newcommandx{\id}[2][2=]{\fn{{\rm id}_{#1}}[#2]} 
\newcommand{\complex}{\mathbb{C}}
\newcommand{\integer}{\mathbb{Z}}
\renewcommand{\natural}{\mathbb{N}}
\newcommand{\ext}[2][\bullet]{\spc{\Lambda}^{#1}{#2}} 
\newcommand{\sym}[2][\bullet]{\spc{S}^{#1}{#2}} 
\newcommandx{\End}[2][1=]{\fn{\spc{End}_{#1}}[#2]}
\newcommandx{\Aut}[2][1=]{\fn{\spc{Aut}_{#1}}[#2]}
\newcommandx{\Hom}[2][1=]{\fn{\spc{Hom}_{#1}}[#2]}
\newcommandx{\image}[1]{\fn{\spc{img}}[#1]}
\renewcommandx{\ker}[1]{\fn{\spc{ker}}[#1]}
\newcommandx{\rank}[1]{\fn{\mathrm{rank}}[#1]}
\newcommandx{\ann}[1]{\fn{\spc{ann}}[\spc{#1}]} 
\newcommand{\insertshfarg}[1]{\ifthenelse{\equal{#1}{\spc{}}}{}{\!\rbrac{#1}}} 
\newcommand{\spc}[1]{\mathsf{#1}} 
\newcommandx{\shf}[4][1=,2=,4=]{\mathcal{#3}^{#1}_{#2}\insertshfarg{\spc{#4}}} 
\newcommandx{\strc}[2][2=]{\shf[][\spc{#1}]{O}[#2]} 
\newcommandx{\smooth}[3][1=\infty,3=]{\shf[#1][\spc{#2}]{C}[#3]}
\newcommandx{\tshf}[2][2=]{\shf[][\spc{#1}]{T}[#2]} 
\newcommandx{\cotshf}[2][2=]{\shf[*][\spc{#1}]{T}[#2]} 
\newcommandx{\gtshf}[2][2=]{\shf[][\spc{\negthickspace #1}]{T\negmedspace\negmedspace\negmedspace T}[#2]} 
\newcommandx{\conorm}[2][2=]{\shf[][\spc{#1}]{N}[#2]}
\newcommandx{\forms}[3][1=\bullet,3=]{\shf[#1][\spc{#2}]{\mathrm{\Omega}}[#3]} 
\newcommandx{\smoothforms}[3][1=\bullet,3=]{\shf[#1][\spc{#2},C^\infty]{\mathrm{\Omega}}[#3]} 
\newcommandx{\mvect}[2][1=\bullet]{ \ext[#1]{\tshf{#2}} }
\newcommandx{\der}[3][1=\bullet,3=]{\shf[#1][#2]{\mathscr{X}}[#3]} 
\newcommandx{\aforms}[3][1=\bullet,3=]{\shf[#1][\shf{#2}]{\mathrm{\Omega}}[#3]} 
\newcommandx{\Supp}[2][1=]{\fn{\spc{Supp}_{#1}}[\shf{#2}]} 
\newcommandx{\Ann}[2][1=]{\fn{\mathrm{Ann}_{#1}}[\shf{#2}]} 
\newcommandx{\tb}[2][1=]{\spc{T}_{\!#1}\spc{#2}} 
\newcommandx{\ctb}[2][1=]{\spc{T}_{\!#1}^*\spc{#2}} 
\newcommandx{\htb}[2][1=]{\spc{T}_{#1}^{1,0}\spc{#2}} 
\newcommandx{\atb}[2][1=]{\spc{T}_{#1}^{0,1}\spc{#2}} 
\newcommandx{\Jet}[3][1=]{\spc{J}^{#2}_{#1}#3}
\newcommandx{\sJet}[3][1=,2=]{\shf[#1][#2]{J}\!#3} 
\renewcommandx{\d}[2][1=,2=]{\fn{d_{\shf{\!#1}}}[#2]} 
\newcommandx{\hook}[2][2=]{\fn{i_{#1}}[#2]} 
\newcommandx{\del}[2][1=,2=]{\fn{\partial_{#1}}[#2]} 
\newcommandx{\delbar}[2][1=,2=]{\fn{\overline\partial_{#1}}[#2]} 
\newcommandx{\cvrnt}[3][1=,3=]{\nabla_{\negmedspace#1}^{#3}#2} 
\newcommandx{\lie}[2][2=]{\fn{\mathscr{L}_{#1}}[#2]} 
\newcommandx{\alie}[3][3=]{\fn{\mathscr{L}_{#2}^{\shf{#1}}}[#3]} 
\newcommandx{\curv}[2][2=]{\fn{\fn{R}[#1]}[#2]} 
\newcommand{\insertrel}[1]{\ifthenelse{\equal{#1}{}}{}{,#1}}
\newcommand{\insertbullet}[1]{\ifthenelse{\equal{#1}{.}}{\bullet}{#1}}
\newcommandx{\hlgy}[3][1=\bullet,3=]{\spc{H}_{#1}^{#3}\!\rbrac{{#2}}} 
\newcommandx{\hlgymap}[4][1=\bullet,3=,4=]{\spc{H}_{#1}^{#4}\fn{#2}[#3]} 
\newcommandx{\cohlgy}[3][1=\bullet,3=]{\spc{H}^{#1}_{#3}\!\rbrac{{#2}}} 
\newcommandx{\chow}[3][1=\bullet,3=]{\spc{A}^{#1}_{#3}\!\rbrac{{#2}}} 
\newcommandx{\cohlgymap}[4][1=\bullet,3=,4=]{\spc{H}^{#1}_{#4}\fn{#2}[#3]} 
\newcommandx{\Ext}[3][1=\bullet,3=]{\fn{\spc{Ext}^{#1}_{#3}}[{#2}]}
\newcommandx{\sChern}[2][1=\bullet]{\fn{\spc{Chern}^{#1}}[\shf{#2}]} 
\newcommandx{\chern}[2][1=]{\fn{c_{#1}}[#2]} 
\newcommandx{\ch}[2][1=]{\fn{\mathrm{ch}_{#1}}[{#2}]} 
\newcommandx{\Chern}[2][1=\bullet]{\fn{\spc{Chern}^{#1}}[\spc{#2}]} 
\newcommandx{\sPont}[2][1=\bullet]{\fn{\spc{Pont}^{#1}}[\shf{#2}]} 
\newcommandx{\Pont}[2][1=\bullet]{\fn{\spc{Pont}^{#1}}[\spc{#2}]} 
\newcommandx{\rk}[2][1=]{\fn{\spc{rk}_{\le#1}}[#2]}
\newcommand{\X}{\spc{X}}
\newcommand{\Y}{\spc{Y}}
\newcommand{\U}{\spc{U}}
\newcommand{\Z}{\spc{Z}}
\newcommand{\Prj}[1]{\mathbb{P}^{{#1}}}
\newcommand{\Aff}[1]{\mathbb{A}^{{#1}}}
\newcommand{\Spec}{\spc{Spec}}
\newcommandx{\Sec}[2][1=]{\spc{Sec}_{#1}\!\rbrac{#2}}
\newcommandx{\Div}[2][1=]{\spc{Div}_{#1}\!\rbrac{#2}}
\newcommand{\acX}{\can_\X^{-1}}
\newcommand{\can}{\mathcal{\omega}}
\newcommand{\V}{\spc{V}} 
\newcommand{\W}{\spc{W}}
\newcommand{\g}{\mathfrak{g}} 
\newcommand{\sA}{\mathcal{A}} 
\newcommand{\E}{\spc{E}} 
\newcommand{\sE}{\mathcal{E}} 
\newcommand{\sL}{\mathcal{L}}
\newcommandx{\sF}{\mathcal{F}}
\newcommandx{\sFd}[2][2=]{\shf[\vee][#1]{F}[#2]}
\newcommandx{\N}{\spc{N}}
\newcommandx{\sN}[3][2=,3=]{\shf[#2][#1]{N}[#3]}
\newcommandx{\sK}[2][2=]{\shf[][#1]{K}[#2]}
\newcommandx{\sI}[1]{\mathcal{I}_{#1}}
\newcommandx{\sJ}[2][2=]{\shf[][#1]{J}[#2]}
\newcommand{\ps}{\sigma}
\newcommand{\pss}{\sigma^\sharp}
\newcommandx{\ips}[1][1=]{\iota_{\ps^{#1}}}
\newcommandx{\D}[3][1=,2=]{\spc{D}_{#1}^{#2}(#3)}
\newcommandx{\AD}[3][1=,2=]{\D[#1][#2]{\ifthenelse{\equal{#2}{}}{\ps_{#3}}{#3}}}
\newcommand{\res}[2]{\mathrm{Res}^{#1}(#2)}
\newcommand{\mres}[2]{\mathrm{Res}^{#1}_{mod}(#2)}
\newcommand{\Sing}[1]{\spc{Sing}(#1)}
\newcommandx{\ati}[2][1=]{\alpha_{#1}(#2)}
\newcommand{\bvb}[1]{\spc{\Lambda}^2\tb{#1}} 
\newcommand{\cvf}[1]{\partial_{#1}}
\newcommand{\sPois}[1]{\fn{\mathcal{P}ois}[#1]} 
\newcommand{\sHam}[1]{\fn{\mathcal{H}am}[#1]} 
\newcommand{\sImg}[1]{\fn{\mathcal{I}mg}[#1]}
\newcommand{\sKer}[1]{\fn{\mathcal{K}er}[#1]}
\newcommandx{\sHom}[2][1=]{\fn{\mathcal{H}om_{#1}}[#2]}
\newcommand{\sEnd}[1]{\fn{\mathcal{E}nd}[#1]}
\newcommandx{\sExt}[3][1=\bullet,3=]{\fn{\mathcal{E}xt^{#1}_{#3}}[#2]}
\newcommand{\codim}[1]{\fn{ {\rm codim} } [#1] }
\newcommand{\tr}{\mathrm{Tr}}
\newcommand{\mainthm}{Let $(\X,\ps)$ be a connected Fano Poisson manifold of dimension $2n$.  Then either $\D[2n-2]{\sigma} = \X$ or $\D[2n-2]{\sigma}$ is a non-empty hypersurface in $\X$.  Moreover, $\D[2n-4]{\sigma}$ is non-empty and has at least one component of dimension $\ge 2n-3$.}
\newcommand{\singthm}{Suppose that $\X$ has even dimension $2n$ and that $\ps \in \cohlgy[0]{\X,\ext[2]{\tshf{\X}}}$ is a generically symplectic Poisson structure.  Let $\W$ be the singular locus of the degeneracy hypersurface $\D[2n-2]{\ps}$.  If $\W$ is nonempty, then every component of $\W$ has dimension $\ge 2n-3$.  Moreover, if $\dim \W = 2n-3$, then $\W$ is a Gorenstein scheme with dualizing sheaf $\can_\X^{-1}|_\W$, and its fundamental class is given by
$$
[\W] = c_1c_2-c_3,
$$
where $c_j$ is the $j^{th}$ Chern class of $\X$.}
\begin{document}
\title{\vspace{-3em}Poisson modules and degeneracy loci}
\author{Marco Gualtieri\footnote{University of Toronto;  \href{mailto:mgualt@math.toronto.edu}{mgualt@math.toronto.edu}} \and Brent Pym\footnote{University of Toronto;  \href{mailto:bpym@math.toronto.edu}{bpym@math.toronto.edu}}}
\maketitle

\begin{abstract}
In this paper, we study the interplay between modules and sub-objects in holomorphic Poisson geometry.  In particular, we define a new notion of ``residue'' for a Poisson module, analogous to the Poincar\'e residue of a meromorphic volume form.  Of particular interest is the interaction between the residues of the canonical line bundle of a Poisson manifold and its degeneracy loci---where the rank of the Poisson structure drops.  As an application, we provide new evidence in favour of Bondal's conjecture that the rank $\le 2k$ locus of a Fano Poisson manifold always has dimension $\ge 2k+1$.  In particular, we show that the conjecture holds for Fano fourfolds.  We also apply our techniques to a family of Poisson structures defined by Fe{\u\i}gin and Odesski{\u\i}, where the degeneracy loci are given by the secant varieties of elliptic normal curves.
\end{abstract}

\tableofcontents

\section{Introduction}
\label{sec:intro}
Let $\X$ be a complex manifold equipped with a holomorphic Poisson structure, which may be viewed as a bivector field $\ps$ with vanishing Schouten bracket 
\begin{align}
[\ps,\ps] = 0. \label{eqn:schouten}
\end{align}
When $\ps$ is non-degenerate, its inverse defines a holomorphic symplectic form.

In this paper, we shall study the geometry of the \defn{degeneracy locus}
$$
\D[2k]{\ps} = \set{x \in \X}{\rank{ \pss_x : {\ctb[x]{X}} \to {\tb[x]{X}}} \le 2k },
$$
where the rank of the Poisson tensor, which is necessarily even, is bounded above by $2k$.  This locus may also be described as the zero set of the section
$$
\ps^{k+1} = \underbrace{\ps \wedge \cdots \wedge \ps}_{k+1{\rm\ times}} \in \cohlgy[0]{\X,\ext[2k+2]{\tshf{\X}}}.
$$

The degeneracy loci of vector bundle morphisms have been well studied in algebraic geometry.  For an overview of the subject, we refer the reader to Chapter 14 in \cite{Fulton1998} and the references therein (particularly \cite{Harris1984a} and \cite{Jozefiak1981}).  Of relevance to us here is the observation that, for a rank-$r$ holomorphic vector bundle $\E$ on $\X$, and a section $\rho \in \cohlgy[0]{\X,\ext[2]{\E}}$, the degeneracy locus $\D[2k]{\rho}$ has codimension at most ${r-2k \choose 2}$ in $\X$, provided it is nonempty.  When $\rho$ is generic in an appropriate sense, this upper bound is actually an equality.  Furthermore, if
$$
{r-2k \choose 2} \le \dim \X
$$
and if $\ext[2]{\E}$ is an ample vector bundle,
then $\D[2k]{\rho}$ is non-empty.

In light of this observation, it is perhaps somewhat surprising that Bondal has made the following
\begin{conjecture}[\cite{Bondal1993}]\label{con:bondal}
Let $(\X,\ps)$ be a connected Fano Poisson manifold\footnote{A \defn{Fano manifold} is a compact complex manifold for which the anti-canonical line bundle $\can_\X^{-1}=\det\tshf{\X}$ is ample.}.  Then for each integer $k \ge 0$ such that $2k < \dim \X$, the degeneracy locus $\D[2k]{\ps}$ is non-empty and has a component of dimension $\ge 2k+1$.
\end{conjecture}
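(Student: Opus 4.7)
The conjecture asserts both non-emptiness of $\D[2k]{\ps}$ and the existence of a component of dimension at least $2k+1$ for every $k$ with $2k < \dim \X = 2n$. Since the expected codimension of the rank $\le 2k$ locus of a generic alternating section of $\ext[2]{\E}$ is $\binom{2n-2k}{2}$, which grows quadratically and far exceeds the conjectural codimension $2n-2k-1$, any proof must use the Poisson condition $[\ps,\ps] = 0$ essentially, not merely the existence of $\ps$ as a bivector.

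My plan is an induction in decreasing $k$. The base case $k = n-1$ is immediate: $\ps^n$ is a section of $\can_\X^{-1}$, which is ample by hypothesis, so its zero locus $\D[2n-2]{\ps}$ is either all of $\X$ or a nonempty hypersurface, and hence has a component of dimension $2n-1$. For the inductive step from $\D[2k+2]{\ps}$ to $\D[2k]{\ps}$, the key observation is that $\D[2k]{\ps}$ coincides, in the appropriate scheme-theoretic sense, with the singular locus of $\D[2k+2]{\ps}$, so the problem reduces to showing that this singular locus has codimension at most $2$ in $\D[2k+2]{\ps}$ and is nonempty.

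To force this codimension bound I would exploit the Poisson module structure on $\can_\X$ coming from the divergence of $\ps$. On the open locus where $\ps$ is symplectic, $\ps^n$ trivializes $\can_\X^{-1}$, and its inverse defines a canonical meromorphic section of $\can_\X$ with polar divisor $\D[2n-2]{\ps}$; taking a Poincar\'e-type residue produces a Poisson module on the hypersurface whose own degeneracy is controlled by $\D[2n-4]{\ps}$. Iterating such residue constructions should propagate the ampleness of $\can_\X^{-1}$ down the chain of degeneracy strata. For $k = n-2$ one can then identify the fundamental class of the singular locus of $\D[2n-2]{\ps}$ as $c_1 c_2 - c_3$ by a Chern class computation on the Jacobian ideal of $\ps^n$, and deduce both nonemptiness and the dimension bound from Fano positivity of the Chern classes together with Fulton-Lazarsfeld-type results on ample vector bundles.

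The main obstacle is carrying out the inductive step for small $k$. When $k = n-1$ or $k = n-2$ the relevant ambient space is $\X$ or a single hypersurface, on which Chern class positivity and ordinary residue theory apply cleanly. For general $k$, however, the intermediate degeneracy loci $\D[2k+2]{\ps}$ are genuinely singular, and one must both construct suitable Poisson modules on these singular subvarieties and track their residues through the singularities to extract a usable fundamental class for $\D[2k]{\ps}$. Establishing the correct Gorenstein or dualizing structure at each step, and maintaining enough positivity to guarantee non-triviality of the resulting Chern-class expressions, is the principal difficulty, and is presumably why the full conjecture remains open beyond the top two strata.
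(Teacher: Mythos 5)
There is a genuine gap: the statement you are asked to prove is Bondal's conjecture in full, and your proposal does not prove it --- nor does the paper, which only establishes the top two strata (\autoref{thm:bondal}), i.e.\ the cases $k=n-1$ and $k=n-2$ of a $2n$-dimensional Fano, and hence the conjecture in dimension four. Your sketch covers essentially those same two cases and then openly concedes that the inductive step for general $k$ is not carried out; an acknowledged missing step of this size is not a proof strategy but a restatement of why the problem is open. Beyond that, the pivot of your induction is incorrect as stated: it is not true that $\D[2k]{\ps}$ coincides scheme-theoretically with the singular locus of $\D[2k+2]{\ps}$. The paper proves only the inclusion $\D[2k-2]{\ps} \subset \D[2k]{\ps}_{sing}$ (\autoref{thm:rank-sing}); the scheme-theoretic identification it actually establishes (\autoref{thm:adapt-sing}) is $\D[2n-2]{\ps}_{sing} = \AD[2n-2][mod]{\ps}$, the degeneracy locus of the \emph{canonical Poisson module}, which in general sits strictly between $\D[2n-4]{\ps}$ and $\D[2n-2]{\ps}$ (cf.\ \autoref{ex:euler-planes}, where the inclusions are strict).

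Consequently your sketch is missing the paper's key mechanism even for the case it does prove: after identifying $\Y_{sing}$ with $\AD[2n-2][mod]{\ps}$ and bounding its dimension by $2n-3$ via the Buchsbaum--Eisenbud/Pfaffian structure (\autoref{cor:adapt-codim3}), one still has to descend from the module degeneracy locus to the genuine Poisson degeneracy locus $\D[2n-4]{\ps}$. The paper does this with a Bott-type vanishing theorem for \emph{adapted} Poisson modules (\autoref{thm:bottvan}), applied along a decreasing chain of strong Poisson subschemes together with the ampleness of $\can_\X^{-1}$ (\autoref{cor:adapt-residue}, \autoref{cor:ample-degen}); nothing in your residue-iteration picture supplies this step, and your appeal to Fulton--Lazarsfeld-type nonemptiness results does not apply here, since the expected codimension $\binom{2n-2k}{2}$ typically exceeds $\dim\X$ --- which is precisely why the conjecture is surprising. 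Finally, ``iterating residue constructions down the chain of strata'' is exactly where the known techniques break down: the lower strata are singular, $\can_\X^{-1}$ is no longer the normal bundle (or even an adapted module) along them, and the paper offers no induction past the second stratum; your proposal does not overcome this obstruction either.
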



This conjecture implies, for example, that the zero locus of a Poisson tensor on any Fano manifold must contain a curve.  In dimension three, this conjecture is already a departure from the generic situation: $\bvb{X}$ has rank three, so a generic bivector field should only vanish at isolated points.  The effect is even more dramatic in dimension four, where $\bvb{X}$ has rank six, and so we expect that the generic section is non-vanishing.  Of course, Poisson structures are far from generic sections; they satisfy the nonlinear partial differential equation \eqref{eqn:schouten}, and it is this ``integrability'' which leads to larger degeneracy loci.  

In 1997, Polishchuk provided some evidence in favour of Bondal's conjecture:

\begin{theorem}[{\cite[Corollary 9.2]{Polishchuk1997}}]\label{thm:polish}
Let $(\X,\ps)$ be a connected Fano Poisson manifold.  If the Poisson structure generically has rank $2k$, then the degeneracy locus $\D[2k-2]{\ps}$ is non-empty and has a component of dimension $\ge 2k-1$.
\end{theorem}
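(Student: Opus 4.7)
The main observation is that, although $\ps^k\in\cohlgy[0]{\X,\ext[2k]{\tshf{\X}}}$ is a priori a section of a bundle of rank $\binom{2n}{2k}$, the Jacobi identity forces it to take values in a natural line subbundle.  Consequently its zero scheme---which coincides with $\D[2k-2]{\ps}$---is cut out by a single equation, and once it is shown to be non-empty, every component is a divisor, of complex dimension $2n-1\ge 2k-1$.  The whole content of the theorem therefore reduces to the non-emptiness of $\D[2k-2]{\ps}$, and this is where the Fano hypothesis should enter.

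I would introduce the sub-bundle as follows.  Set $\sD:=\pss(\cotshf{\X})\subseteq\tshf{\X}$ and let $\tilde\sD$ denote its saturation, a reflexive subsheaf of generic rank $2k$.  The Schouten vanishing $[\ps,\ps]=0$ is precisely the Jacobi identity for the induced bracket on functions, which is equivalent to the statement that $\tilde\sD$ is closed under the Lie bracket on the open set where it is locally free.  Because the image of $\pss$ is contained in $\tilde\sD$, the bivector $\ps$ itself lies in $\ext[2]{\tilde\sD}$, and therefore $\ps^k\in\ext[2k]{\tilde\sD}=\det\tilde\sD$, a rank-one reflexive subsheaf of $\ext[2k]{\tshf{\X}}$ (and a line bundle outside a codimension-$\ge 2$ subset of $\X$).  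From the short exact sequence $0\to\tilde\sD\to\tshf{\X}\to\mathcal{N}\to 0$ one reads off the isomorphism $\det\tilde\sD\otimes\det\mathcal{N}\cong\can_\X^{-1}$.  A point $x$ lies in $\D[2k-2]{\ps}$ precisely when the rank of $\pss_x$ is less than $2k$, which is the same as the vanishing of $\ps^k(x)$ in $\det\tilde\sD$.

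Suppose now, for a contradiction, that $\ps^k$ is nowhere-vanishing.  Then $\ps$ has constant rank $2k$ throughout $\X$; the subsheaf $\tilde\sD=\sD$ is a genuine rank-$2k$ sub-bundle of $\tshf{\X}$; and involutivity promotes $\sD$ to a regular holomorphic foliation of $\X$ whose leaves are $2k$-dimensional symplectic manifolds.  The nowhere-vanishing section $\ps^k$ trivializes $\det\sD$, and the exact sequence above then yields $\chern[1]{\mathcal{N}}=\chern[1]{\tshf{\X}}$.

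To finish, I would invoke Bott's vanishing theorem for holomorphic foliations: for a regular foliation $\sD\subseteq\tshf{\X}$ with normal bundle $\mathcal{N}$ of rank $q=2n-2k$, every polynomial of weighted degree $>q$ in the Chern classes $\chern[i]{\mathcal{N}}$ vanishes in $H^{*}(\X,\complex)$.  Applied to $\chern[1]{\mathcal{N}}^{q+1}$ this gives $\chern[1]{\tshf{\X}}^{2n-2k+1}=0$, and hence, since $k\ge 1$,
\[
\chern[1]{\tshf{\X}}^{2n}\;=\;\chern[1]{\tshf{\X}}^{2n-2k+1}\cdot\chern[1]{\tshf{\X}}^{2k-1}\;=\;0.
\]
This contradicts the ampleness of $\can_\X^{-1}$, which forces the top self-intersection to be strictly positive.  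The step I expect to require the most care in a clean write-up is the behaviour of $\tilde\sD$ across its codimension-$\ge 2$ singular locus, in particular ensuring that the line-bundle identification $\det\tilde\sD\cong\can_\X^{-1}\otimes(\det\mathcal{N})^{-1}$ is rigorously valid globally on $\X$; the residue and Poisson-module techniques developed elsewhere in the paper presumably handle this.
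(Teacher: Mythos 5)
There is a genuine gap, and it is in the very first reduction. You claim that because $\ps^k$ takes values in the rank-one saturated subsheaf $\det\tilde\sD\subset\ext[2k]{\tshf{\X}}$, its zero scheme --- ``which coincides with $\D[2k-2]{\ps}$'' --- is cut out by one equation, so that every component is a divisor and only non-emptiness remains to be proved. This conflates two different vanishing loci. The degeneracy locus $\D[2k-2]{\ps}$ is by definition the vanishing of $\ps^k$ evaluated fibrewise in $\ext[2k]{\tshf{\X}}$; the vanishing of $\ps^k$ as a section of the line bundle $(\ext[2k]{\tilde\sD})^{\vee\vee}$ is a different (smaller) scheme, because the inclusion $\det\tilde\sD\hookrightarrow\ext[2k]{\tshf{\X}}$ itself degenerates exactly along $\D[2k-2]{\ps}$: the section can be non-vanishing in the line bundle while the bivector drops rank. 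The paper's cone example illustrates this phenomenon ($\ps$ non-vanishing in $\der[2]{\X}$ yet $\D[0]{\ps}\neq\varnothing$), and the Fe{\u\i}gin--Odesski{\u\i} structure of generic rank $2$ on $\Prj{3}$ is a direct counterexample to your conclusion: there $\D[0]{\ps}$ is an elliptic curve together with four isolated points, of codimension $2$ and $3$, not a divisor. Indeed the theorem only asserts a component of dimension $\ge 2k-1$, which is in general far below $\dim\X-1$, precisely because $\D[2k-2]{\ps}$ need not be divisorial. Your closing worry about the codimension-two singular set of $\tilde\sD$ is not the real difficulty; the difficulty is that the line-subsheaf picture simply does not compute $\D[2k-2]{\ps}$.

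What survives is your contradiction argument, which is exactly Polishchuk's non-emptiness step: if $\D[2k-2]{\ps}=\varnothing$ the symplectic distribution is a regular foliation, $\ps^k$ trivializes its determinant, so $\chern[1]{\conorm{}^{}}=\chern[1]{\tshf{\X}}$ for the normal bundle, and Bott vanishing kills $\chern[1]{\tshf{\X}}^{\dim\X}$, contradicting ampleness of $\can_\X^{-1}$. But to get the dimension bound you must run this on the open set $\U=\X\setminus\D[2k-2]{\ps}$, where the foliation is regular: Bott gives $\chern[1]{\can_\X^{-1}}^{\dim\X-2k+1}|_\U=0$, and the long exact sequence in Borel--Moore homology then shows that the nonzero class $\chern[1]{\can_\X^{-1}}^{\dim\X-2k+1}\cap[\X]$, which lies in degree $2(2k-1)$, is pushed forward from $\D[2k-2]{\ps}$; since $\hlgy[j]{\Z}=0$ for $j>2\dim\Z$, this forces a component of dimension $\ge 2k-1$. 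This support argument is the mechanism the paper itself abstracts in \autoref{cor:adapt-residue}, and it is what replaces your false divisoriality claim.
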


In fact, Polishchuk gave the proof in the odd-dimensional ``non-degenerate'' case in which the dimension of $\X$ is $2k+1$, but as Beauville observes in \cite{Beauville2010}, his proof extends easily to the more general case stated here.  It follows immediately that Bondal's conjecture holds for Poisson structures whose rank is $\le 2$ everywhere, and, in particular, for Fano varieties of dimension $\le 3$.

The basic tool used in Polishchuk's proof is Bott's vanishing theorem for the characteristic classes of the normal bundle to a nonsingular foliation \cite{Bott1972}.  Polishchuk applies this theorem to the anti-canonical line bundle $\can_\X^{-1} = \det \tshf{X}$ of $\X$ in order to conclude that an appropriate power of its first Chern class vanishes on $\X \setminus \D[2k-2]{\ps}$.  This vanishing would contradict the ampleness of $\can_\X^{-1}$ unless $\D[2k-2]{\ps}$ had sufficiently large dimension.  The crucial observation that allows the application of Bott's theorem is that $\can_\X^{-1}$ is the determinant of the normal bundle to the $2k$-dimensional symplectic leaves of $\ps$.  One might like to prove the full conjecture now by repeating the argument on $\D[2k-2]{\ps}$.  However, $\omega_\X^{-1}$ can no longer be identified with the appropriate normal bundle, and so the theorem does not apply.  Moreover, $\D[2k-1]{\ps}$ is not Fano, so one cannot apply an inductive argument, either.

In this paper, we provide further evidence for the conjecture in the even-dimensional case:
\begin{reptheorem}{thm:bondal}
\mainthm
\end{reptheorem}
In particular, this result shows that Bondal's \autoref{con:bondal} holds for Fano varieties of dimension four.  The statement about $\D[2n-2]{\ps}$ is straightforward: this locus is the zero set of the section
$$
\ps^n\in \cohlgy[0]{\X,\det\tshf{\X}} = \cohlgy[0]{\X,\can_\X^{-1}},
$$
so it is an anticanonical hypersurface in $\X$ unless $\ps^n = 0$, in which case $\D[2n-2]{\ps}$ is all of $\X$.  If $\D[2n-2]{\ps}$ is an irreducible hypersurface, it is a Calabi-Yau variety which turns out to be highly singular.  We expect similar behaviour from the lower-rank degeneracy loci of $\ps$.

Showing that $\D[2n-4]{\ps}$ has a component of dimension $\ge 2n-3$ is the difficult part of the theorem.  The main idea of the proof is to exploit the fact that, while the restriction of $\can_\X^{-1}$ to the degeneracy locus $\D[2n-2]{\ps}$ is not the normal bundle to the symplectic leaves, it is a \defn{Poisson module}---that is, it carries a flat ``Poisson connection''.  While Bott's theorem does not apply directly to Poisson modules, we show that there is a nonempty Poisson subvariety of $\D[2n-2]{\ps}$ where the theorem does apply.  By bounding the dimension of this subvariety, we are able to bound the dimension of $\D[2n-4]{\ps}$, as in Polishchuk's proof. 

In order to obtain an appropriate analogue of Bott's theorem, we must develop several aspects of the geometry of Poisson modules which, to our knowledge, have yet to appear in the literature.  In particular, we associate to any Poisson line bundle a collection of closed Poisson subvarieties, indexed by even integers $2k$.  These subvarieties have the useful property that the Poisson module is flat along their symplectic leaves of dimension $2k$.  As a result, one may apply Bott's vanishing theorem to the Poisson module on these subvarieties.

We also construct a multivector field of degree $2k+1$ on the degeneracy locus $\D[2k]{\ps}$ which is canonically associated to a Poisson line bundle.  We call these tensors the \defn{residues} of the module because they are direct analogues of the Poincar\'e residue of a meromorphic form.  They are also closely related to the classes constructed by Brylinski and Zuckerman in \cite{Brylinski1999}.  Of course, the residue can only be nonzero if $\dim \D[2k]{\ps} \ge 2k+1$, and so the fact that these tensors are nonzero in many examples is, in a sense, a local explanation for the dimensions occurring in \autoref{con:bondal}.

The natural Poisson module structure on the canonical bundle $\can_\X$ is of particular importance.  We derive a simple formula for the residues of this module in terms of the tensors $\ps^{k+1}$ which define the degeneracy loci.  Using this formula, we show that the divisor on which a generically symplectic Poisson structure degenerates is highly singular:
\begin{reptheorem}{thm:adapt-sing}
\singthm
\end{reptheorem}

Bailey~\cite{Bailey2012} has recently shown that any generalized complex manifold is locally isomorphic to a product of a real symplectic manifold with a holomorphic Poisson manifold.  It follows that the type-change loci of a generalized complex manifold are locally described by the degeneracy loci of a holomorphic Poisson structure.  Our theorem therefore puts constraints on the structure of these type-change loci.

The paper is organized as follows.  We begin in \autoref{sec:basics} with a review of Poisson geometry.  In \autoref{sec:subvar}, we discuss Poisson subvarieties, paying special attention to the ``strong'' Poisson subvarieties.  In \autoref{sec:mod-geom}, we recall and develop the basic theory of Poisson modules.  Most notably, we define the residues of a Poisson line bundle.  \autoref{sec:resolve} introduces the degeneracy loci of a Poisson module, and in \autoref{sec:char-class}, we explain how these degeneracy loci may be used to obtain Bott-type vanishing theorems for Poisson modules.  In \autoref{sec:degen}, we apply the theory to the canonical module, obtaining formulae for its residues in terms of the Poisson tensor itself.  We end in \autoref{sec:elliptic} with a discussion of a family of Poisson structures introduced by Fe{\u\i}gin and Odesski{\u\i}~\cite{Feigin1989,Feigin1998} related to elliptic normal curves.  In these examples, the residues of the canonical module are all non-trivial and \autoref{thm:adapt-sing} has consequences for the secant varieties of elliptic normal curves of odd degree.

\begin{remark}
Due to the importance of singular spaces in this topic, we work in the category of schemes.  We invite the reader unfamiliar with schemes to think of them as manifolds; we hope that much of the paper will still be accessible.\qed
\end{remark}
\noindent
\emph{Notation}: Throughout this paper, the word ``scheme'' shall mean a separated scheme of finite type over the complex numbers $\complex$.  If $\X$ is a scheme, we denote by $\strc{\X}$ its structure sheaf.  If $\sE$ is a sheaf of abelian groups on $\X$, we denote by $\cohlgy{\X,\sE}$ the cohomology of $\X$ with values in $\sE$.  If $\sE$ is an $\strc{X}$-module we denote by $\sE^\vee = \sHom[\strc{X}]{\sE,\strc{X}}$ its dual.

We write $\forms[1]{X} = \forms[1]{{\X/\complex}}$ for the sheaf of differentials on $\X$ and $\tshf{X} =(\forms[1]{X})^\vee$ for the tangent sheaf of $\complex$-linear derivations $\strc{X} \to \strc{X}$.

For $k \in \natural$, we denote by $\forms[k]{X} = \ext[k]{\forms[1]{X}}$ the sheaf of $k$-forms, and by $\der[k]{\X} = (\forms[k]{X})^\vee$ the $\strc{X}$-module of alternating $k$-multilinear forms on $\forms[1]{X}$.  More generally, if $\sE$ is an $\strc{X}$-module, we set $\der[k]{\X}(\sE) = \sHom[\strc{X}]{{\forms[k]{X},\sE}}.$  Notice that, being the dual of a coherent $\strc{X}$-module, $\der[k]{\X}$ is always reflexive.

The direct sum $\der{\X} = \bigoplus_{k \ge 0} \der[k]{\X}$ comes equipped with the usual wedge product of alternating forms, making it into a graded-commutative algebra.  Similarly, $\der{\X}(\sE) = \bigoplus_{k\ge 0}\der[k]{\X}(\sE)$ is a graded $\der{\X}$-module.  When $\X$ is smooth, $\forms[1]{X}$ is locally-free and hence $\der{\X} \cong \mvect{X}$ is simply the exterior algebra of the tangent sheaf.  However, the situation is more delicate when $\X$ is singular.  In this case, we still have $\der[1]{\X} = \tshf{X}$ by definition, but the obvious map $\mvect{\X} \to \der{\X}$ need not be an isomorphism in higher degrees.  In order to avoid confusion with the existing literature on Poisson manifolds, we refer to sections of $\der[k]{\X}$ as \defn{$k$-derivations} and reserve the terminology ``multivector fields'' for sections of $\mvect{\X}$.   It is $\der{\X}$ that plays the more central role in this paper.

Notice that even when $\X$ is singular, the operations of Lie derivative, contraction with forms, Schouten bracket, etc., familiar from the the theory of multivector fields on smooth manifolds carry over to $\der{\X}$ simply because it is the dual of the differential graded algebra $(\forms{\X},d)$.
\\ \ \\
\emph{Acknowledgements}: We are grateful to Arnaud Beauville, Ragnar-Olaf Buchweitz, William Graham, Nigel Hitchin, David Morrison, Alexander Odesski\u\i, Alexander Polishchuk and Mike Roth for many helpful conversations.  This research was supported by an NSERC Canada Graduate Scholarship (Doctoral), an NSERC Discovery Grant and an Ontario ERA.
\pagebreak
\section{Review of Poisson geometry}
\label{sec:basics}
We begin with a brief review of Poisson geometry in the algebraic setting, and refer to Polishchuk's paper~\cite{Polishchuk1997} for a more thorough introduction.    One significant difference from the smooth setting is that the usual identification between Poisson brackets and bivector fields, which depends upon the morphism 
\begin{equation}\label{eq:bivto2der}
\mvect[2]{\X}\to \der[2]{\X}
\end{equation}
between bivectors and biderivations being an isomorphism, need no longer hold.
\begin{definition}
A \defn{Poisson scheme} is a pair $(\X,\ps)$, where $\X$ is a scheme and $\ps \in \cohlgy[0]{\X,\der[2]{\X}}$ is a $2$-derivation such that the $\complex$-bilinear morphism
$$
\mapdef{\cbrac{\cdot,\cdot}}{\strc{\X} \times \strc{\X}}{\strc{\X}}
								{(f,g)}{\ps(df\wedge dg)}
$$
defines a Lie algebra structure on $\strc{\X}$.  This Lie bracket is the \defn{Poisson bracket}.  We denote by $\pss : \forms[1]{X} \to \tshf{X}$ the $\strc{X}$-linear \defn{anchor map} defined by
$$
\pss(\alpha)(\beta) = \ps(\alpha\wedge\beta)
$$
for all $\alpha,\beta \in \forms[1]{X}$.  We say that $(\X,\ps)$ is a \defn{smooth} Poisson scheme if the underlying scheme $\X$ is smooth.

\begin{remark}
We reserve the term ``regular'' for a stronger notion than smoothness; see \autoref{defn:sing-pois}.\qed
\end{remark}

\end{definition}
\begin{example}\label{ex:cone}
Consider the quotient $\X=\Aff{2}/\integer_2$ of $\Aff{2} =\Spec(\complex[x,y])$ by the equivalence relation $(x,y)\sim (-x,-y)$.  The map $\complex[u,v,w] \to \complex[x,y]$ sending $u,v,w$ to $ x^2,xy,y^2$ gives an isomorphism from $\X$ to the quadric cone $uw=v^2$ in $\Aff{3}$, a surface with an ordinary double point singularity.  The Poisson structure $\cvf{x}\wedge\cvf{y}$ on $\Aff{2}$ descends to $\X$ as a $2$-derivation, but not as a bivector field, because the morphism~\eqref{eq:bivto2der}, an inclusion in this case, has image generated by $\{x\cvf{x}\wedge x\cvf{y},\ x\cvf{x}\wedge y\cvf{y},\ y\cvf{x}\wedge y\cvf{y}\}$.  Therefore, the morphism has a torsion cokernel, which is supported at the singular point and generated by the image of the Poisson tensor.\qed
\end{example}
\begin{definition}
Let $(\X,\sigma)$ and $(\Y,\eta)$ be Poisson schemes, with corresponding brackets $\cbrac{\cdot,\cdot}_\X$ and $\cbrac{\cdot,\cdot}_\Y$.  A morphism $f : \X \to \Y$ is a \defn{Poisson morphism} if it preserves the Poisson brackets, ie.~the pull-back morphism $f^* : \strc{Y} \to f_*\strc{X}$ satisfies
$$
f^*\cbrac{g,h}_\Y = \cbrac{f^*g,f^*h}_\X
$$
for all $g,h\in\strc{Y}$.
\end{definition}
As in symplectic geometry, a Poisson structure has a distinguished subsheaf of its infinitesimal symmetries which is generated by functions, as follows.
\begin{definition}
A vector field $Z \in \tshf{X}$ is \defn{Hamiltonian} with respect to $\ps$ if there exists a function $f \in \strc{X}$ such that $Z = \pss(df)$.  We denote by 
$$
\sHam{\ps} = \sImg{{\pss \circ d : \strc{X} \to \tshf{X}}}
$$
the $\complex$-linear subsheaf generated by (locally-defined) Hamiltonian vector fields.

A vector field $Z \in \tshf{X}$ is \defn{Poisson} with respect to $\ps$ if it is an infinitesimal symmetry of $\ps$, ie. $\lie{Z}\ps = 0$.  We denote by
$$
\sPois{\ps} \subset \tshf{X}
$$
the $\complex$-linear subsheaf of (locally-defined) Poisson vector fields. 
\end{definition}
It follows from the Jacobi identity that Hamiltonian vector fields are Poisson:
$$
\sHam{\ps} \subset \sPois{\ps}.
$$
The discrepancy between these sheaves will be of interest to us in~\autoref{sec:mod-geom}.

Since $\ps^\sharp \circ d : (\strc{X},\cbrac{\cdot,\cdot}) \to (\tshf{X},[\cdot,\cdot])$ is a Lie algebra homomorphism, the $\strc{X}$-submodule $\sImg{\ps^\sharp} \subset \tshf{X}$ is closed under the Lie bracket of vector fields.  Therefore, it defines a (singular) foliation of the complex analytic space associated to $\X$---the well-known foliation by symplectic leaves.  However, it is important to note that these symplectic leaves may not define algebraic subschemes of $\X$; after all, they are solutions of a differential equation.

\section{Poisson subschemes}
\label{sec:subvar}
Let $(\X,\ps)$ be a Poisson scheme.  We say the Poisson scheme $(\Y,\eta)$ is a \defn{Poisson subscheme} of $(\X,\ps)$ if $\Y$ is a subscheme of $\X$ and the embedding $i : \Y \to \X$ is a Poisson morphism.  An open embedding clearly defines a Poisson subscheme in a unique way.
A closed subscheme of $\X$, however, need not admit a compatible Poisson structure.  When it does, this Poisson structure is unique:
\begin{proposition}\label{prop:subscheme}
Let $(\X,\ps)$ be a Poisson scheme, and let $\Y$ be a closed subscheme with embedding $i : \Y \to \X$ and ideal sheaf $\sI{\Y}$.  Then the following are equivalent:
\begin{enumerate}
\item $\Y$ admits the structure of a Poisson subscheme
\item $\sI{\Y}$ is a sheaf of Poisson ideals, ie. $\cbrac{\sI{\Y},\strc{X}} \subset \sI{\Y}$
\item $Z(\sI{\Y}) \subset \sI{\Y}$ for all $Z \in \sHam{\ps}$
\end{enumerate}
In this case, the induced Poisson structure on $\Y$ is unique and denoted $\ps|_\Y$.
\end{proposition}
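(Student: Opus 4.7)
The plan is to prove the chain $(3) \Leftrightarrow (2) \Rightarrow (1) \Rightarrow (2)$, handling uniqueness along the way. The reasoning is essentially a translation between three formulations of the same compatibility condition, so the core content is algebraic and local.

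First I would dispatch $(2) \Leftrightarrow (3)$, which is essentially formal. By definition $\sHam{\ps}$ is the $\complex$-linear subsheaf locally generated by Hamiltonian vector fields $X_f = \pss(df) = \{f,\cdot\}$ for $f \in \strc{X}$, so the condition $Z(\sI{\Y}) \subset \sI{\Y}$ for every Hamiltonian $Z$ amounts to $\{f,\sI{\Y}\} \subset \sI{\Y}$ for all local $f \in \strc{X}$, which by skew-symmetry of the bracket is precisely (2).

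Next I would handle $(1) \Rightarrow (2)$: if $i : \Y \hookrightarrow \X$ is a Poisson morphism, then for any $f \in \sI{\Y}$ and $g \in \strc{X}$ one has $i^*\{f,g\}_\X = \{i^*f, i^*g\}_\Y = \{0, i^*g\}_\Y = 0$, so $\{f,g\}_\X \in \sI{\Y}$. The harder direction $(2) \Rightarrow (1)$ requires actually producing a $2$-derivation $\eta \in \cohlgy[0]{\Y,\der[2]{\Y}}$ inducing the bracket. The Poisson ideal condition ensures that the $\complex$-bilinear bracket $\{\cdot,\cdot\}_\X$ descends to a Lie bracket on $\strc{Y} = \strc{X}/\sI{\Y}$ satisfying Leibniz, so it remains to recognize this descended bracket as coming from a section of $\der[2]{\Y}$. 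For this I would work with the pullback $i^*\forms[2]{\X}$, which receives a map $i^*\ps : i^*\forms[2]{\X} \to \strc{Y}$ from the $2$-derivation $\ps$. The surjection $i^*\forms[1]{\X} \twoheadrightarrow \forms[1]{\Y}$ has kernel generated, as an $\strc{Y}$-module, by differentials $df$ of local sections $f \in \sI{\Y}$; consequently the kernel of the induced surjection $i^*\forms[2]{\X} \twoheadrightarrow \forms[2]{\Y}$ is generated by wedges $df \wedge \alpha$ with $f \in \sI{\Y}$. On such an element we have $i^*\ps(df \wedge \alpha) = -i^*(\pss(\alpha)(f))$, and condition (2) applied to $\alpha = dg$ (together with $\strc{X}$-linearity in $\alpha$) forces this to lie in $\sI{\Y}$, hence to vanish after restriction. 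Thus $i^*\ps$ factors through $\forms[2]{\Y}$, defining the desired $\eta$, and by construction $\eta(d\bar f \wedge d\bar g) = \{\bar f, \bar g\}_\Y$.

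Finally, uniqueness of $\eta$ is essentially automatic: since $\forms[1]{\Y}$ is locally generated as an $\strc{Y}$-module by exact differentials $d\bar f$, any $\strc{Y}$-bilinear alternating pairing $\forms[1]{\Y} \times \forms[1]{\Y} \to \strc{Y}$ is determined by its values on such generators, and these values are fixed by the bracket.

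The main technical obstacle is the construction of $\eta$ in $(2) \Rightarrow (1)$, which is genuinely subtle in the singular case: because the morphism \eqref{eq:bivto2der} need not be an isomorphism on $\Y$, one cannot simply invoke an identification with bivectors, and one must instead argue directly at the level of duals of Kähler differentials and verify that the relevant factorization through $\forms[2]{\Y}$ exists. Everything else is routine bookkeeping with the Leibniz and Jacobi identities.
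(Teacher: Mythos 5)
Your proof is correct and follows essentially the same route as the paper: (2)$\Leftrightarrow$(3) via the relation $\pss(df)(g)=\cbrac{f,g}$, and (1)$\Leftrightarrow$(2) by descending the bracket to $\strc{Y}=\strc{X}/\sI{\Y}$. The paper's proof is just a terse version of this; your explicit factorization of $i^*\ps$ through $\forms[2]{\Y}$ to produce the $2$-derivation, and the uniqueness remark, simply fill in details the paper leaves implicit.
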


\begin{proof}
The equivalence of 2) and 3) follows from the relation between the Poisson bracket and the morphism $\pss : \forms[1]{X} \to \tshf{X}$.  Meanwhile, condition 2) is exactly what is required for $\cbrac{\cdot,\cdot}$ to descend to a Poisson bracket on $\strc{Y} = \strc{X}/\sI{\Y}$.
\end{proof}

It will be useful for us to consider a special class of Poisson subschemes:
\begin{definition}
Let $(\X,\ps)$ be a Poisson scheme.  A closed subscheme $\Y$ of $\X$ is a \defn{strong Poisson subscheme} if its ideal sheaf $\sI{\Y}$ is preserved by all Poisson vector fields, ie. if
$$
Z(\sI{\Y}) \subset \sI{\Y}
$$
for all $Z \in \sPois{\ps}$.
\end{definition}

Note that the condition of being a strong Poisson subscheme is a local one.  In general, it is not sufficient to check that $\Y$ is preserved by all global Poisson vector fields $Z \in \cohlgy[0]{\X,\sPois{\ps}}$.  Clearly, every strong Poisson subscheme of $(\X,\ps)$ is a Poisson subscheme.  However, the converse need not hold in general, as the following example shows:
\begin{example}
Let $\Aff{3} = \Spec(\complex[x,y,z])$ be affine three-space.  The bivector field
$$
\ps = \cvf{x} \wedge \cvf{y}
$$
is Poisson, and the image of $\ps^\sharp$ is spanned by $\cvf{x}$ and $\cvf{y}$.  The $z=0$ plane $\W \subset \Aff{3}$ is therefore preserved by all Hamiltonian vector fields, and hence it is a Poisson subvariety.  (Indeed, it is a symplectic leaf.)  The vector field $\cvf{z}$ is clearly a Poisson vector field, but it does not preserve $\W$.  Hence $\W$ is a Poisson subscheme which is not strong.\qed
\end{example}

\begin{lemma}\label{lem:strg-subvar}
The reduced scheme $\X_{red}$ of a Poisson scheme $(\X,\ps)$ and all its irreducible components are strong Poisson subvarieties.  Similarly, the singular locus $\X_{sing}$ of $\X$ is a strong Poisson subscheme.
\end{lemma}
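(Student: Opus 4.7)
The plan is to prove the stronger claim that each of the three ideal sheaves in question is preserved by \emph{every} $\complex$-linear derivation $Z \in \tshf{\X}$, and not merely by Poisson vector fields in $\sPois{\ps}$. In other words, the Poisson hypothesis plays no role: one verifies that the subschemes $\X_{red}$, the irreducible components, and $\X_{sing}$ are intrinsic enough invariants of $\X$ that they are preserved by the formal infinitesimal automorphism
$$
\mathrm{id} + \epsilon Z \colon \strc{\X}[\epsilon]/(\epsilon^2) \to \strc{\X}[\epsilon]/(\epsilon^2)
$$
associated to any derivation $Z$. Extracting the coefficient of $\epsilon$ then immediately gives the desired inclusion $Z(\sI{\Y}) \subseteq \sI{\Y}$.

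For $\X_{red}$, the ideal sheaf is the nilradical $\sqrt{0} \subseteq \strc{\X}$. Since the property of being nilpotent is preserved by any $\complex$-algebra automorphism, the formal flow $\mathrm{id}+\epsilon Z$ sends $\sqrt{0}$ into itself, and one reads off $Z(\sqrt{0}) \subseteq \sqrt{0}$. Equivalently, iterating Leibniz on a relation $f^n = 0$ and using that we are in characteristic zero shows directly that some power of $Z(f)$ must vanish.

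For an irreducible component, take its reduced scheme structure, so that its ideal is a minimal prime $\mathfrak{p} \subseteq \strc{\X}$. Since $\mathfrak{p}$ is minimal, the localization $\strc{\X,\mathfrak{p}}$ has a unique prime ideal, and hence its maximal ideal $\mathfrak{p}\strc{\X,\mathfrak{p}}$ is precisely its nilradical. The derivation $Z$ extends to the localization by the quotient rule and therefore, by the previous paragraph, preserves this nilradical; contracting back to $\strc{\X}$ gives $Z(\mathfrak{p}) \subseteq \mathfrak{p}$.

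For $\X_{sing}$, I would use that its defining ideal sheaf is built from the Fitting ideals of the coherent $\strc{\X}$-module $\forms[1]{\X}$. Fitting ideals are intrinsic invariants attached to a coherent sheaf, hence preserved by any automorphism of the underlying scheme; applying this to the infinitesimal automorphism above yields $Z(\sI{\X_{sing}}) \subseteq \sI{\X_{sing}}$. The point requiring the most care, and the main potential obstacle, is making the preservation of Fitting ideals under a \emph{formal} automorphism over $\complex[\epsilon]/(\epsilon^2)$ fully precise; concretely, one unravels it by picking a local finite free presentation of $\forms[1]{\X}$ and verifying by direct Leibniz computation that $Z$ sends each minor of the presentation matrix into the ideal those minors generate.
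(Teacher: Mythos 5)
Your overall strategy---prove that the ideals in question are preserved by \emph{all} vector fields, and then specialize to $\sPois{\ps}$---is exactly the paper's. The difference is in how that preservation is established. For $\X_{red}$ and the irreducible components the paper simply cites Lemma~1.1 of Polishchuk, whereas you reprove it: the Leibniz iteration on $f^n=0$ (inductively $f^{n-k}Z(f)^{2k-1}=0$, so $Z(f)^{2n-1}=0$) and the localization-at-a-minimal-prime argument are correct and make this part self-contained; note that characteristic zero is genuinely needed, since the statement for $\X_{red}$ fails in characteristic $p$. For $\X_{sing}$ the paper takes the ideal to be the annihilator of $\forms[n+1]{\X}$, $n=\dim\X$, and gets preservation from the one-line computation $0=\lie{Z}(fs)=Z(f)s+f\lie{Z}s=Z(f)s$; you instead put the Fitting-ideal (Jacobian) structure on $\X_{sing}$ and invoke invariance of Fitting ideals. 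That route can be completed---the essential point is that Fitting ideals commute with the base change to $\complex[\epsilon]/(\epsilon^2)$, so the $\epsilon$-coefficient really does land back in the same ideal---but be aware that you are then proving the claim for a possibly different scheme structure on $\X_{sing}$ than the one the paper actually uses, and the paper's choice admits the much shorter Lie-derivative argument above.

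One step of your plan, as stated, is not valid: for the nilradical, the automorphism $\mathrm{id}+\epsilon Z$ of $\strc{X}\otimes_\complex\complex[\epsilon]/(\epsilon^2)$ preserves the nilradical of \emph{that} ring, but that nilradical is $\sqrt{0}+\epsilon\,\strc{X}$, since every $\epsilon$-multiple is nilpotent; hence the membership $f+\epsilon Z(f)\in\sqrt{0}+\epsilon\,\strc{X}$ carries no information about $Z(f)$, and one cannot ``read off'' $Z(\sqrt{0})\subseteq\sqrt{0}$. Extracting the coefficient of $\epsilon$ only works when the ideal formed over $\complex[\epsilon]/(\epsilon^2)$ is the extension of the ideal over $\complex$---true for Fitting ideals by compatibility with base change, false for the nilradical. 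This is also signalled by the fact that the formal-automorphism argument makes no use of characteristic zero, while the conclusion for $\X_{red}$ is false in positive characteristic. Your fallback Leibniz computation closes this gap, so the proof as a whole stands, but the guiding principle should be stated with this caveat rather than as an automatic consequence of ``intrinsicness.''
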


\begin{proof}
Lemma 1.1 in \cite{Polishchuk1997} shows that the ideals defining the reduced subscheme and its irreducible components are preserved by all vector fields.  In particular, they are preserved by $\sPois{\ps}$.

Similarly, when $\X$ has dimension $n$, the ideal $\sI{\X_{sing}}$ of $\X_{sing}$ is the annihilator of $\forms[n+1]{\X}$.  If $Z \in \tshf{X}$ is a vector field, $s \in \forms[n+1]{X}$, and $f \in \sI{\X_{sing}}$, we have
$$
0 = \lie{Z}(fs) = Z(f)s + f\lie{Z}s = Z(f)s,
$$
so that $Z(f) \in \sI{\X_{sing}}$.  It follows that $\sI{\X_{sing}}$ is preserved by all vector fields.  In particular, it is preserved by $\sPois{\ps}$.
\end{proof}

\begin{lemma}\label{lem:strong-pairs}
If $\Y_1$ and $\Y_2$ are Poisson subschemes of $(\X,\ps)$, then so are $\Y_1\cap \Y_2$ and $\Y_1 \cup \Y_2$.  Furthermore, if $\Y_1$ and $\Y_2$ are strong Poisson subschemes, then $\Y_1 \cap \Y_2$ and $\Y_1 \cup \Y_2$ are also strong.
\end{lemma}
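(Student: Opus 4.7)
The plan is to reduce both parts of the lemma to a single elementary observation about ideals and derivations, and then apply it separately to the sheaves $\sHam{\ps}$ and $\sPois{\ps}$. For closed subschemes $\Y_1,\Y_2 \subset \X$, the scheme-theoretic intersection and union have ideal sheaves
$$
\sI{\Y_1 \cap \Y_2} = \sI{\Y_1} + \sI{\Y_2}, \qquad \sI{\Y_1 \cup \Y_2} = \sI{\Y_1} \cap \sI{\Y_2},
$$
so the statement reduces to checking that these operations on ideals preserve invariance under a specified collection of derivations.

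Concretely, I would first record the following stalk-local fact: if $\sA \subset \tshf{\X}$ is any $\complex$-linear subsheaf of derivations and $\sJ_1,\sJ_2 \subset \strc{\X}$ are ideals with $Z(\sJ_i) \subset \sJ_i$ for all $Z \in \sA$ and $i=1,2$, then $Z(\sJ_1+\sJ_2) \subset \sJ_1 + \sJ_2$ (by the Leibniz rule and $\complex$-linearity) and $Z(\sJ_1 \cap \sJ_2) \subset \sJ_1 \cap \sJ_2$ (if $f$ lies in both $\sJ_1$ and $\sJ_2$, then so does $Zf$). Both containments can be verified on stalks, so there is no sheaf-theoretic subtlety.

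Now for the first assertion, by \autoref{prop:subscheme} a closed subscheme $\Y_i$ is Poisson if and only if $\sI{\Y_i}$ is preserved by every Hamiltonian vector field. Applying the preceding observation with $\sA = \sHam{\ps}$ to $\sJ_i = \sI{\Y_i}$ shows that $\sI{\Y_1}+\sI{\Y_2}$ and $\sI{\Y_1}\cap\sI{\Y_2}$ are likewise preserved by $\sHam{\ps}$, so $\Y_1 \cap \Y_2$ and $\Y_1 \cup \Y_2$ are Poisson subschemes. For the strong case, the same argument applies verbatim with $\sA = \sPois{\ps}$ in place of $\sHam{\ps}$.

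I expect no real obstacle here; the argument is essentially formal, and the main thing to be careful about is distinguishing the scheme-theoretic union (defined by $\sI{\Y_1} \cap \sI{\Y_2}$) from the set-theoretic one. The only place where one might want to double-check is whether the intersection $\sI_1 \cap \sI_2$ behaves well as a sheaf of ideals, but since intersection of subsheaves is formed stalkwise this is immediate.
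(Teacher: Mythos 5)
Your argument is correct and is essentially the paper's proof: both reduce the statement to the elementary fact that a derivation preserving two ideal sheaves also preserves their sum and intersection, and then apply this to Hamiltonian vector fields (via \autoref{prop:subscheme}) for the Poisson case and to Poisson vector fields for the strong case. Your additional remarks on stalkwise verification and on distinguishing the scheme-theoretic union are accurate but not needed beyond what the paper states.
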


\begin{proof}
Simply notice that if $Z \in \tshf{X}$ is a vector field preserving the ideals $\sI{\Y_1}$ and $\sI{\Y_2}$ defining $\Y_1$ and $\Y_2$, it also preserves $\sI{\Y_1} + \sI{\Y_2}$ and $\sI{\Y_1} \cap \sI{\Y_2}$.  Apply this observation to Hamiltonian and Poisson vector fields, respectively.
\end{proof}

Perhaps the most important examples of Poisson subschemes are the degeneracy loci of the Poisson structure:
\begin{definition}
Let $(\X,\ps)$ be a Poisson scheme.  The \defn{$2k^{th}$ degeneracy locus of $\ps$} is the locus $\D[2k]{\ps} \subset \X$ where the morphism $\pss : \forms[1]{X} \to \tshf{X}$ has rank $\le 2k$.  It is the closed subscheme whose ideal sheaf is the image of the morphism
$$
\xymatrix{
\forms[2k+2]{\X} \ar[r]^{\ps^{k+1}} & \strc{X},
}
$$
where
$$
\ps^{k+1} = \underbrace{\ps \wedge \cdots \wedge \ps}_{k+1\mathrm{\ times}} \in \cohlgy[0]{\X,\der[2k+2]{\X}}.
$$
\end{definition}

\begin{remark}
The rank of $\ps$ is always even because $\ps$ is skew-symmetric.  In the analytic topology, the subscheme $\D[2k]{\ps}$ is simply the union of all symplectic leaves of $\ps$ having dimension $\le 2k$.\qed
\end{remark}

\begin{remark}
The section $\ps^{k+1}$ has the property that for $f_1,\ldots,f_{2k+2} \in \strc{X}$, the element
$$
\ps^{k+1}(df_1\wedge\cdots\wedge df_{2k+2}) \in \strc{X}
$$
is equal to the Pfaffian of the matrix $\{f_i,f_j\}_{i,j \le 2k+2}$ of Poisson brackets.  In particular, this definition is appropriate even when $\X$ is singular. However, it is important to note that when $\X$ is singular, it is possible that $\D[2k]{\ps}$ is non-empty although $\ps^{k+1}$ is non-vanishing as a section of $\der[2k+2]{\X}$.\qed
\end{remark}

\begin{example}  Consider the Poisson structure of \autoref{ex:cone}, where $\X$ is the cone $uw=v^2$ in $\Aff{3}$, and the Poisson structure is given by
$$
\ps = 2u\cvf{u}\wedge\cvf{v} + 4v\cvf{u}\wedge\cvf{w} + 2w\cvf{v}\wedge\cvf{w} .
$$
In this case, $\ps$ gives a trivialization of the $\strc{X}$-module $\der[2]{\X}$, because $\X$ is normal, $\der[2]{\X}$ is reflexive, and $\ps$ is non-vanishing on the smooth locus.  Therefore, $\ps$ is a non-vanishing section of $\der[2]{\X}$.  However, the ideal defining $\D[0]{\ps}$ is generated by $u,v$ and $w$.  Hence, $\D[0]{\ps}$ is the singular point of the cone, which is the origin in $\Aff{3}$. \qed 
\end{example}

The definition of the degeneracy loci is compatible with Poisson subschemes:
\begin{lemma}
If $(\X,\ps)$ is a Poisson scheme, and $\Y$ is a Poisson subscheme, then $\D[2k]{\ps} \cap \Y = \D[2k]{\ps|_\Y}$ as schemes.
\end{lemma}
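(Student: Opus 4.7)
The plan is to unwind both sides of the claimed equality to see that they are cut out by the same ideal sheaf in $\strc{Y}$. By definition, $\D[2k]{\ps}\cap\Y$ is cut out in $\Y$ by the ideal $i^{-1}\sI{\D[2k]{\ps}}\cdot\strc{Y}$, where $\sI{\D[2k]{\ps}}$ is the image of $\ps^{k+1}\colon\forms[2k+2]{X}\to\strc{X}$; this ideal equals the image of the pullback morphism $i^*\ps^{k+1}\colon i^*\forms[2k+2]{X}\to\strc{Y}$. Meanwhile, $\D[2k]{\ps|_\Y}$ is cut out by the image of $(\ps|_\Y)^{k+1}\colon\forms[2k+2]{Y}\to\strc{Y}$. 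Thus the lemma reduces to identifying these two images in $\strc{Y}$.

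To identify them, I would begin with the conormal exact sequence $\sI{\Y}/\sI{\Y}^2\to i^*\forms[1]{X}\to\forms[1]{Y}\to 0$ and take its $(2k+2)$-fold exterior power, obtaining a surjection $i^*\forms[2k+2]{X}\twoheadrightarrow\forms[2k+2]{Y}$. The key claim is then that $i^*\ps^{k+1}$ factors through this surjection and that the induced map on $\forms[2k+2]{Y}$ equals $(\ps|_\Y)^{k+1}$. Granted this, the image of $i^*\ps^{k+1}$ coincides with the image of $(\ps|_\Y)^{k+1}$, and the lemma follows.

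The crux is the case $k=0$: that $i^*\ps$ descends along $i^*\forms[2]{X}\twoheadrightarrow\forms[2]{Y}$ to $\ps|_\Y$. For this I would argue locally: any element of the kernel of $i^*\forms[1]{X}\to\forms[1]{Y}$ is of the form $\alpha=\sum h_i\,df_i$ with $f_i\in\sI{\Y}$ and $h_i\in\strc{Y}$, and the Poisson-ideal property $\cbrac{\sI{\Y},\strc{X}}\subset\sI{\Y}$ from \autoref{prop:subscheme} then forces $\ps(\alpha\wedge\beta)|_\Y=0$ for every $\beta\in\forms[1]{X}$. Hence $\ps$ descends to a biderivation on $\Y$, which must coincide with $\ps|_\Y$ by the uniqueness clause of \autoref{prop:subscheme}. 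The cases $k>0$ then follow formally by taking wedge powers in the graded-commutative algebra $\der{\X}$, since wedge products commute with restriction along $i$.

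No real obstacle is expected: the argument is essentially a verification that the formation of degeneracy loci is functorial along closed Poisson embeddings, and everything ultimately reduces to the tautology that $\ps$ restricts to $\ps|_\Y$.
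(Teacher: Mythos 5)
Your proposal is correct and follows essentially the same route as the paper: both arguments come down to the surjectivity of $\forms[2k+2]{\X} \to \forms[2k+2]{\Y}$ together with the compatibility of $\ps^{k+1}$ with $(\ps|_\Y)^{k+1}$ under this map, which forces the two image ideals in $\strc{\Y}$ to agree. Your local verification via the conormal sequence and the Poisson-ideal property is just a spelled-out proof of the commutativity of the square that the paper asserts directly.
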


\begin{proof}
The diagram
$$
\xymatrix{
\forms[2k+2]{\X} \ar[d] \ar[r]^{\ps^{k+1}} & \strc{\X} \ar[d]\\
\forms[2k+2]{\Y} \ar[r]^{\ps|_\Y^{k+1}} & \strc{\Y}
}
$$
commutes.  Since the map $\forms[2k+2]{\X} \to \forms[2k+2]{\Y}$ is surjective, we see that the image of $\ps|_\Y^{k+1}$ is the same as the image of $\ps^{k+1}(\forms[2k+2]{\X})$ in $\strc{\Y}$, as required.
\end{proof}

Polishchuk showed~\cite[Corollary 2.3]{Polishchuk1997} that the degeneracy loci are Poisson subschemes.  In fact, as he notes, they are strong Poisson subschemes:
\begin{proposition}\label{prop:degen-strong}Let $(\X,\ps)$ be a Poisson scheme.  Then for $0 \le 2k \le \dim \X$, the degeneracy loci $\D[2k]{\ps}$ are strong Poisson subschemes of $\X$.
\end{proposition}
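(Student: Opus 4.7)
The plan is to verify the strong condition directly from the definition of $\D[2k]{\ps}$: by construction, its ideal sheaf is $\sI{\D[2k]{\ps}} = \image{\ps^{k+1}}$, where we view $\ps^{k+1}$ as the $\strc{X}$-linear map $\forms[2k+2]{\X} \to \strc{X}$ obtained by contraction. Thus it suffices to show that for every locally-defined Poisson vector field $Z \in \sPois{\ps}$ and every local section $\alpha$ of $\forms[2k+2]{\X}$, the function $Z(\ps^{k+1}(\alpha))$ again lies in $\image{\ps^{k+1}}$.

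The key ingredient is the Leibniz rule for the Lie derivative with respect to the natural pairing between $k$-derivations and $k$-forms. As noted at the end of the ``Notation'' section, all of the usual Cartan calculus (contraction, Lie derivative, Schouten bracket) is available on $\der{\X}$ because it is dual to the differential graded algebra $(\forms{\X},d)$. In particular, for any $P \in \der[m]{\X}$ and $\alpha \in \forms[m]{\X}$, one has
$$
\lie{Z}(P(\alpha)) \;=\; (\lie{Z}P)(\alpha) \;+\; P(\lie{Z}\alpha).
$$
Because $Z$ acts on functions as a derivation, $\lie{Z}(P(\alpha)) = Z(P(\alpha))$.

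Now I would apply this to $P = \ps^{k+1}$. Since Lie derivative is a graded derivation of the wedge product on $\der{\X}$, the hypothesis $\lie{Z}\ps = 0$ gives
$$
\lie{Z}\ps^{k+1} \;=\; (k+1)\,\ps^k \wedge \lie{Z}\ps \;=\; 0.
$$
Substituting into the Leibniz identity yields
$$
Z(\ps^{k+1}(\alpha)) \;=\; \ps^{k+1}(\lie{Z}\alpha),
$$
which is manifestly in $\image{\ps^{k+1}} = \sI{\D[2k]{\ps}}$. This proves $Z(\sI{\D[2k]{\ps}}) \subset \sI{\D[2k]{\ps}}$ for every $Z \in \sPois{\ps}$, and hence $\D[2k]{\ps}$ is a strong Poisson subscheme.

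The only potential obstacle is justifying the Leibniz rule when $\X$ is singular, since $\der{\X}$ is defined as the dual of $\forms{\X}$ rather than as multivector fields. However, this is precisely what the remark at the end of the notation section guarantees: the duality with $(\forms{\X},d)$ transports the full Cartan calculus, including the pairing identity above, so no further care is needed. The argument is thus essentially a one-line Lie-derivative computation once the formalism is in place.
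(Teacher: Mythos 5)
Your proof is correct and follows essentially the same route as the paper's: both reduce to the surjection $\ps^{k+1}:\forms[2k+2]{\X}\to\sI{\D[2k]{\ps}}$ and then use the Leibniz rule $\lie{Z}(\ps^{k+1}(\alpha)) = (\lie{Z}\ps^{k+1})(\alpha) + \ps^{k+1}(\lie{Z}\alpha)$ together with $\lie{Z}\ps^{k+1}=0$ for a Poisson vector field $Z$. Your extra remarks on why the Cartan calculus applies to $\der{\X}$ in the singular case are consistent with the paper's stated conventions.
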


\begin{proof}
Let $\sI{}$ be the ideal sheaf of $\D[2k]{\ps}$.  We have the exact sequence
$$
\xymatrix{
\forms[2k+2]{X} \ar[r]^-{\ps^{k+1}} &  \sI{} \ar[r] &0,
}
$$
so it suffices to show that for every $\alpha \in \forms[2k+2]{X}$ and every Poisson vector field $Z \in \sPois{\ps}$, the function $\lie{Z} (\ps^{k+1}(\alpha))$ lies in the image of $\ps^{k+1}$.  We compute
\begin{align*}
\lie{Z} (\ps^{k+1}(\alpha)) &= (\lie{Z}\ps^{k+1})(\alpha) + \ps^{k+1}(\lie{Z}\alpha) \\
&= \ps^{k+1}(\lie{Z}\alpha),
\end{align*}
since $\lie{Z}\ps= 0$, showing that $Z(f)\in \sI{{\D[2k]{\ps}}}$, as required.
\end{proof}

It will often be convenient to consider the degeneracy loci and the singular loci in tandem.  We therefore make the following definition:
\begin{definition}\label{defn:sing-pois}
Let $(\X,\ps)$ be a Poisson scheme, and suppose that the maximal rank of $\ps$ is equal to $2k$.  The \defn{Poisson singular locus} of $(\X,\ps)$ is the strong Poisson subscheme defined by
$$
\Sing{\X,\ps} = \X_{sing} \cup \D[2k-2]{\ps}. 
$$
We say that $(\X,\ps)$ is \defn{regular} if $\X$ is smooth and $\ps$ has constant rank, ie., $\Sing{\X,\ps} = \varnothing$.
\end{definition}

\section{Geometry of Poisson modules}
\label{sec:mod-geom}
\subsection{Poisson modules}

Let $(\X,\ps)$ be a Poisson scheme, and let $\sE$ be a sheaf of $\strc{X}$-modules.  A \defn{Poisson connection} on $\sE$ is a $\complex$-linear morphism of sheaves $\nabla : \sE \to \der[1]{\X}(\sE)$ satisfying the Leibniz rule
$$
\nabla(fs) = -\pss(df)\otimes s + f\nabla s
$$
for all $f \in \strc{X}$ and $s \in \sE$.  Recall that $\der[1]{\X}(\sE) = \sHom[]{{\forms[1]{X},\sE}}$, so if $\X$ is smooth or $\sE$ is locally-free, we have $\der[1]{\X}(\sE) \cong \tshf{X} \otimes \sE$.

\begin{remark}\label{rem:cnxn-jet} Let $\sJet[1]{\sE}$ be the sheaf of one-jets of sections of $\sE$.  Then a Poisson connection is an $\strc{X}$-linear map $\nabla : \sJet[1]{\sE} \to \der[1]{\X}(\sE)$ making the following diagram commute:
$$
\begin{CD}
\forms[1]{\X} \otimes \sE @>>> \sJet[1]{\sE} \\
@V{-\pss \otimes \id{\sE}}VV		@VV{\nabla}V \\
\tshf{\X} \otimes \sE @>>>		\der[1]{\X}(\sE)
\end{CD}
$$
In other words, a Poisson connection is a first-order differential operator on $\sE$ whose symbol is $-\pss \otimes \id{\sE}$.  
\qed
\end{remark}

Using a Poisson connection, we may differentiate a section of $\sE$ along a one-form: if $\alpha \in \forms[1]{X}$, we set
$$
\nabla_\alpha s = (\nabla s)(\alpha).
$$
The Poisson connection $\nabla$ is \defn{flat} if
$$
\nabla_{d\{f,g\}}s = (\nabla_{df}\nabla_{dg} - \nabla_{dg} \nabla_{df} )s 
$$
for all $f,g\in\strc{X}$ and $s \in \sE$.  A \defn{Poisson module} is a sheaf of $\strc{X}$-modules equipped with a flat Poisson connection.

\begin{remark}
For convenience, we will refer to a Poisson module $(\sE,\nabla)$ as \defn{locally-free} (resp., \defn{invertible}) if the underlying $\strc{X}$-module is locally-free (resp., invertible).  In particular, we warn the reader that a locally-free Poisson module need not be locally isomorphic to a direct sum of copies of $\strc{X}$ as a Poisson module.\qed
\end{remark}

There are two important examples of Poisson modules:

\begin{example}\rm The composite morphism
$$
-\pss d : \strc{X} \to \tshf{X} = \der[1]{\X}(\strc{X})
$$
taking a function to minus its Hamiltonian vector field defines a Poisson connection on $\strc{X}$ by the Leibniz rule for $d$.  This connection is flat because of the Jacobi identity for the Poisson bracket.\qed
\end{example}

\begin{example}\label{ex:can-mod}\rm
When $\X$ is smooth of pure dimension $n$, there is a natural Poisson module structure on the canonical sheaf $\can_\X = \forms[n]{X}$.  For $\alpha \in \forms[1]{\X}$ and $\mu \in \can_\X$, the connection is defined by the formula
$$
\nabla_{\alpha} \mu = - \alpha \wedge d \ips \mu,
$$
where $\ips$ denotes interior contraction by $\ps$.  For a function $f \in \strc{X}$, we have the identity
$$
\nabla_{df}\mu = -\lie{\pss(df)}\mu.
$$
This module is variously referred to as the \defn{modular representation}, the \defn{Evens--Lu--Weinstein module} or the \defn{canonical module}~\cite{Evens1999,Polishchuk1997}.  As we shall see, the geometry of the canonical module is intimately connected with the degeneracy loci of $\ps$. \qed
\end{example}

As usual, we can build new modules out of old:
\begin{example}\rm
If $\sE$ and $\sE'$ are sheaves of $\strc{X}$-modules with Poisson connections $\nabla$ and $\nabla'$, the sheaves $\sE \oplus \sE'$, $\sE \otimes \sE'$ and $\sHom[\strc{X}]{\sE,\sE'}$ inherit Poisson connections in the obvious way, as do the exterior and symmetric powers $\ext{\sE}$, $\sym{\sE}$.  When $\nabla$ and $\nabla'$ are flat, so are the newly constructed connections.

Similarly, if $\sL$ is an invertible sheaf with an isomorphism $\sL^{\otimes k} \cong \sE$, there is a unique Poisson connection on $\sL$ which induces the given one on $\sE$.  In other words, $\nabla$ has a well-defined $k^{th}$-root, which is flat if and only if $\nabla$ itself is.\qed
\end{example}

If $\sL$ is an invertible sheaf equipped with a Poisson connection $\nabla$, and $s \in \sL$ is a local trivialization, we obtain a unique vector field $Z \in \tshf{X}$ such that
$$
\nabla s = Z \otimes s.
$$
This vector field is called the \defn{connection vector field} for the trivialization $s$.  Then $\nabla$ is flat if and only if $Z$ is a Poisson vector field.

\begin{definition}
Let $(\X,\ps)$ be a Poisson scheme.  The connection vector field for a local trivialization $\phi \in \can_\X$ of the canonical module is called the \defn{modular vector field} associated to $\phi$.
\end{definition}

Given two Poisson connections $\nabla,\nabla'$ on $\sE$, the difference $\nabla-\nabla'$ is $\strc{X}$-linear, defining a global morphism $\nabla-\nabla' \in \Hom[\X]{{\forms[1]{\X}\otimes \sE,\sE}}$.  We thus have
\begin{lemma}
The space of Poisson connections on $\sE$ is an affine space modelled on $\Hom[\X]{{\forms[1]{\X}\otimes \sE,\sE}} = \cohlgy[0]{\X,\der[1]{\X}(\sEnd{\sE})}$, provided it is nonempty.
\end{lemma}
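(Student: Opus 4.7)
The plan is to verify directly that the difference of any two Poisson connections on $\sE$ is $\strc{X}$-linear, and conversely that any $\strc{X}$-linear perturbation of a Poisson connection is again a Poisson connection. This is a standard argument for connections being affine spaces; the only twist here is that the symbol of the Leibniz rule involves $-\pss \otimes \id{\sE}$ rather than the identity, but since this symbol depends only on $\ps$ and not on $\nabla$, it drops out of the difference.

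Concretely, given two Poisson connections $\nabla,\nabla'$ on $\sE$, the map $A = \nabla - \nabla'$ is $\complex$-linear, and for $f \in \strc{X}$ and $s \in \sE$ I compute
$$
A(fs) = \nabla(fs) - \nabla'(fs) = \bigl(-\pss(df)\otimes s + f\nabla s\bigr) - \bigl(-\pss(df)\otimes s + f\nabla' s\bigr) = f \, A(s),
$$
so $A$ is $\strc{X}$-linear and defines a global section of $\sHom[\strc{X}]{\sE,\der[1]{\X}(\sE)}$. Conversely, for any such $\strc{X}$-linear $A$ and any Poisson connection $\nabla$, the map $\nabla + A$ satisfies the Leibniz rule, since the contribution of $A$ to $(\nabla+A)(fs)$ is $A(fs) = fA(s)$, matching the pure $\strc{X}$-linear part. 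Hence the set of Poisson connections, when nonempty, is a torsor under $\Hom[\X]{\sE,\der[1]{\X}(\sE)}$.

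It remains to identify this Hom space with the one in the statement. Using that $\der[1]{\X}(\sE) = \sHom[\strc{X}]{\forms[1]{\X},\sE}$ and the tensor-Hom adjunction, together with the definition $\der[1]{\X}(\sEnd{\sE}) = \sHom[\strc{X}]{\forms[1]{\X},\sEnd{\sE}}$, I get the chain of identifications
$$
\Hom[\X]{\sE,\der[1]{\X}(\sE)} = \Hom[\X]{\forms[1]{\X}\otimes \sE,\sE} = \Hom[\X]{\forms[1]{\X},\sEnd{\sE}} = \cohlgy[0]{\X,\der[1]{\X}(\sEnd{\sE})},
$$
which is exactly the model space in the statement. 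There is no real obstacle in this argument; the only point requiring minor care is making sure the adjunction is applied in the generality of arbitrary $\strc{X}$-modules $\sE$, which is standard.
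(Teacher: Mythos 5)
Your proposal is correct and follows essentially the same route as the paper, which simply observes that the difference of two Poisson connections is $\strc{X}$-linear because the symbol term $-\pss(df)\otimes s$ cancels; you merely spell out the converse direction and the tensor--Hom identifications that the paper leaves implicit.
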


Let $\sF = \sImg{\pss}$ and $\sN{} = \der[1]{\X}/\sF$, and consider the exact sequence
$$
0 \to \sF \to \der[1]{\X} \to \sN{} \to 0.
$$
If $\sE$ is locally free, then $\der[1]{\X}(\sE) \cong \der[1]{\X}\otimes \sE$, and the sequence
$$
0 \to \sF\otimes \sE \to \der[1]{\X}(\sE) \to \sN{}\otimes \sE \to 0
$$
is also exact.  Given a Poisson connection $\nabla$ on $\sE$, the composite morphism
$$
\sE \stackrel{\nabla}{\to} \der[1]{\X}(\sE) \to \sN{} \otimes \sE
$$
is actually $\strc{X}$-linear, and hence defines a \defn{normal Higgs field}
$$
\Phi_\nabla \in \cohlgy[0]{\X,\sN{}\otimes \sEnd{\sE}},
$$
which measures the failure of the connection vector fields for $\nabla$ to be tangent to the symplectic leaves of $\ps$.

\begin{definition}\label{def:adapt}
The Poisson connection $\nabla$ on the locally-free sheaf $\sE$ is \defn{adapted} if its normal Higgs field $\Phi_\nabla$ vanishes.  Hence $\nabla$ is adapted exactly when it is induced by a $\complex$-linear morphism
$$
\sE \to \sF\otimes \sE,
$$
which may be seen as a Poisson connection on the symplectic leaves of $\ps$.
\end{definition}
Since a Poisson connection on a symplectic manifold is the same thing as a flat connection in the ordinary sense, we may think of an adapted Poisson module as defining a flat partial connection along the symplectic leaves.  We shall see that modules regularly fail to be adapted, and that this phenomenon has important consequences for the geometry of the degeneracy loci.

\begin{example}\label{ex:adapt}
Consider once again the the Poisson structure $\ps =\cvf{x}\wedge\cvf{y}$ on $\X = \Aff{3}=\Spec(\complex[x,y,z])$.  Any vector field $Z$ induces a Poisson connection on $\strc{X}$ by the formula
$$
\nabla f = -\pss(df) + fZ \in \der[1]{\X}.
$$
Then $\sF = \sImg{\pss}$ is generated by $\cvf{x}$ and $\cvf{y}$, and the normal Higgs field $\Phi_\nabla$ is just the class of $Z$ in the normal bundle $\sN{}=\der[1]{\X}/\sF$ to the symplectic leaves.  Hence the connection is adapted if and only if $Z$ is an $\strc{X}$-linear combination of $\cvf{x}$ and $\cvf{y}$.  In particular, the connection associated to the Poisson vector field $Z = \cvf{z}$ is not adapted, even though it is flat.\qed
\end{example}

\begin{example}\label{ex:can-adapt}
Consider the Poisson structure  $\ps = x \cvf{x}\wedge\cvf{y}$ on $\Aff{2} = \Spec(\complex[x,y])$, which vanishes on the $y$-axis $\Y \subset \Aff{2}$.  Its image sheaf $\sF$ is generated by $x\cvf{x}$ and $x\cvf{y}$, yielding a torsion normal sheaf $\sN{}$ freely generated over $\strc{\Y}$ by $\cvf{x}|_\Y$ and $\cvf{y}|_\Y$.  Using the formulae in \autoref{sec:mod-res}, the Poisson module structure on $\can_{\Aff{2}}$ is readily computed.  We have
$$
\nabla (dx\wedge dy) = \cvf{y} \otimes (dx \wedge dy),
$$
 and hence the normal Higgs field for the canonical module is
 $$
 \Phi_\nabla = \cvf{y}|_\Y \ne 0.
 $$
 Therefore $\can_{\Aff{2}}$ is not adapted.  However, its restriction to the open dense set $\Aff{2} \setminus \Y$ is adapted.\qed
\end{example}

The interaction between Poisson modules and Poisson subschemes is an important, but subtle, aspect of the theory.  This interaction will play an important role in our our proof of \autoref{thm:bondal}, allowing us to ``tunnel down'' to the lower-rank strata of the Poisson structure.  In general, it is not true that a Poisson module $(\sE,\nabla)$ on $\X$ restricts to a Poisson module  on a closed Poisson subscheme $\Y \subset \X$.  Indeed, if $\sI{\Y}$ is the ideal sheaf of $\Y$ in $\X$, the conormal sheaf is $\sI{\Y}/\sI{\Y}^2$, and we have an exact sequence
$$
0 \to \der[1]{\Y}(\sE) \to \der[1]{\X}(\sE)|_\Y \to\sHom[{\strc{\Y}}]{{\sI{\Y}/\sI{\Y}^2,\sE|_\Y}}.
$$
Similar to our above considerations, we obtain from the connection a normal Higgs field along $\Y$:
$$
\Phi^\Y_\nabla \in \Hom[\Y]{\sI{\Y}/\sI{\Y}^2,\sEnd{\sE}|_\Y}.
$$
This Higgs field vanishes identically if and only if $(\sE|_\Y,\nabla|_\Y)$ defines a Poisson connection for $\ps|_\Y$.  When $\sE$ is locally free, $\Phi^\Y_\nabla$ is a section of $\sN{\Y} \otimes \sEnd{\sE}$, where $\sN{\Y} = (\sI{\Y}/\sI{\Y}^2)^\vee$ is the normal sheaf of $\Y$ in $\X$.  

A useful aspect of strong Poisson subschemes is that they behave well with respect to Poisson modules:
\begin{lemma}\label{prop:strong-module}
Let $(\X,\ps)$ be a Poisson scheme and let $(\sL,\nabla)$ be an invertible Poisson module.  Then if $\Y \subset \X$ is any strong Poisson subscheme of $\X$, the restriction $(\sL|_\Y,\nabla|_\Y)$ is a Poisson module on $\Y$ with respect to the induced Poisson structure.
\end{lemma}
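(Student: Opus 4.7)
The plan is to use the normal Higgs field $\Phi^\Y_\nabla \in \Hom[\Y]{\sI{\Y}/\sI{\Y}^2,\sEnd{\sL}|_\Y}$ introduced just above the lemma: its vanishing is precisely the condition for $\nabla$ to restrict to a Poisson connection on $\sL|_\Y$ compatible with $\ps|_\Y$. Since $\sL$ is invertible, $\sEnd{\sL}|_\Y \cong \strc{\Y}$ and the Higgs field becomes a section of the normal sheaf $\sN{\Y}$, so the whole question reduces to a local computation.

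First I would pass to a local trivialization $s\in\sL$, with associated connection vector field $Z\in\tshf{\X}$ defined by $\nabla s = Z\otimes s$. Because $\nabla$ is flat on an invertible module, $Z$ lies in $\sPois{\ps}$, as recorded earlier in this section. Unwinding the construction of $\Phi^\Y_\nabla$, its local representative sends a class $[f]\in\sI{\Y}/\sI{\Y}^2$ to the class of $Z(f)$ in $\strc{\Y}$. The hypothesis that $\Y$ is a \emph{strong} Poisson subscheme says exactly that $Z(\sI{\Y})\subset\sI{\Y}$ for every $Z\in\sPois{\ps}$, so this class vanishes. Since $\Phi^\Y_\nabla$ is intrinsic to $\nabla$ and independent of trivialization, local vanishing gives $\Phi^\Y_\nabla = 0$ globally on $\Y$.

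With the Higgs field vanishing, $\nabla|_\Y$ factors through $\der[1]{\Y}(\sL|_\Y)$ and defines a Poisson connection for $\ps|_\Y$: the Leibniz rule descends because the anchor is compatible with restriction, by \autoref{prop:subscheme}. Flatness is then automatic, as the restricted connection vector field $Z|_\Y$ is itself a Poisson vector field for $\ps|_\Y$, being the restriction to $\Y$ of a Poisson vector field tangent to $\Y$. Together these give that $(\sL|_\Y,\nabla|_\Y)$ is a Poisson module on $\Y$.

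The main conceptual point, and the only step where the argument would fail under weaker hypotheses, is the indispensable role played by the \emph{strong} Poisson condition. The connection vector field of an invertible Poisson module lies in $\sPois{\ps}$ but in general neither in $\sHam{\ps}$ nor in $\sImg{\pss}$, so the ordinary Poisson subscheme condition (which by \autoref{prop:subscheme} only controls $\sHam{\ps}$) does not suffice to force $Z(\sI{\Y})\subset\sI{\Y}$. The example immediately preceding \autoref{lem:strg-subvar}, in which $\{z=0\}\subset\Aff{3}$ is a Poisson but not strong Poisson subscheme for $\ps = \cvf{x}\wedge\cvf{y}$, already witnesses that the statement of the lemma would fail if ``strong'' were dropped.
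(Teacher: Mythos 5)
Your proof is correct and follows essentially the same route as the paper's: pass to a local trivialization, use that flatness forces the connection vector field $Z$ to be Poisson, and invoke the strong Poisson condition $Z(\sI{\Y})\subset\sI{\Y}$ to kill the normal component so that $\nabla$ restricts to $\der[1]{\Y}(\sL|_\Y)$. Your phrasing via the vanishing of $\Phi^\Y_\nabla$ and the explicit remark that flatness descends because $Z|_\Y$ is Poisson for $\ps|_\Y$ are just slightly more spelled-out versions of what the paper's proof does implicitly.
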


\begin{proof}
Choosing a local trivialization $s$ of $\sL$, we have
$$
\nabla s = Z \otimes s,
$$
where $Z$ is a Poisson vector field.  Since $\Y$ is a strong Poisson subvariety, we have $Z(\sI{\Y})\subset \sI{\Y}$.  Hence the image of $Z \otimes s$ in $\sHom[{\strc{\Y}}]{{\sI{\Y}/\sI{\Y}^2,\sL|_\Y}}$ is zero and $Z \otimes s \in \der[1]{\Y}(\sL)$, as required.
\end{proof}

\subsection{The distribution induced by a Poisson module}
\label{sec:mod-dist}

The choice of a Poisson module structure $\nabla$ on the invertible sheaf $\sL$ induces a Poisson bracket $\cbrac{\cdot,\cdot}_\nabla$ on the section ring $\bigoplus_{n \ge 0} \sL^n$, rendering the total space of $\sL^\vee$ into a Poisson scheme~\cite[Corollary 5.3]{Polishchuk1997}.  The bracket is uniquely determined by the identities
\begin{align*}
\cbrac{f,g}_\nabla &= \cbrac{f,g} \\
\cbrac{f,s}_\nabla &= -\nabla_{df} s,
\end{align*}
for $f \in \strc{X}$ and $s \in \sL$, and is compatible with the grading in the sense that $\cbrac{\sL^i,\sL^j}_\nabla\subset \sL^{i+j}$ for all $i,j \ge 0$.  This Poisson structure has an interpretation in terms of the module of differential operators on $\sL$, which we now describe.

Let $\forms[1]{\sA_\sL} = \sJet[1]{\sL}\otimes \sL^\vee$, so that $\sA_\sL = (\forms[1]{\sA_\sL})^\vee$ is the sheaf of differential operators of degree $\le 1$ on the $\strc{X}$-module $\sL$.  Recall that $\sA_\sL$ is known as the \defn{Atiyah algebroid} of $\sL$, and the jet sequence for $\sL$ leads to the exact sequence
\begin{equation}\label{exactjet}
\xymatrix{
0 \ar[r] & \strc{X} \ar[r] & \sA_\sL \ar[r]^a & \der[1]{\X} \ar[r] & 0,
}
\end{equation}
where the map $\strc{X} \to \sA_\sL$ takes a function $f$ to the operator of multiplication by $f$, and a differential operator $D \in \sA_\sL$ is taken to the derivation $a(D)(f)=[D,f]$.  

The bracket $\cbrac{\cdot,\cdot}_\nabla$ of sections of $\sL$ depends only on their one-jet, and defines a bidifferential operator
$$
\ps_\nabla \in \cohlgy[0]{\X,\sA^2_\sL},
$$
where we use $\sA^k_\sL= (\wedge^k \forms[1]{\sA_\sL})^\vee$ to denote the sheaf of totally skew $k$-differential operators on $\sL$.  The section $\ps_\nabla$ will play an important role in our understanding of adaptedness in \autoref{sec:resolve}.  Its image under the natural map $\sA^2_\sL \to \der[2]{\X}$ is the Poisson structure $\ps$.

The section $\ps_\nabla$ makes $\sA_\sL$ into a triangular Lie bialgebroid in the sense of Mackenzie and Xu~\cite{Mackenzie1994}.  In particular, the sheaf $\forms[1]{\sA_\sL}$ inherits a Lie bracket, and the composition
$$
a^*:\xymatrix{
\forms[1]{\sA_\sL} \ar[r]^{\pss_\nabla} & \sA_\sL \ar[r]^a & \der[1]{\X}
}
$$
is a morphism of sheaves of Lie algebras.  
Moreover, the following diagram commutes:
$$
\xymatrix{
\forms[1]{\sA_\sL} \ar[r]^{\pss_\nabla} & \sA_\sL \ar[d] \\
\forms[1]{\X} \ar[u] \ar[r]^\pss & \der[1]{\X} 
}
$$
Combining these facts immediately gives us the following
\begin{proposition}
Suppose that $(\sL,\nabla)$ is an invertible Poisson module on $(\X,\ps)$.  Then the distribution $a^*(\forms[1]{\sA_\sL}) \subset \tshf{X}$ is involutive for the Lie bracket, and contains $\sImg{\pss}$.
\end{proposition}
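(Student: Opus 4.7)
The plan is to read both claims off directly from the Lie bialgebroid machinery just recalled; no genuinely new computation is needed. I would handle the containment first, as it is essentially a diagram chase. Given a local section $\alpha$ of $\forms[1]{\X}$, I lift it via the natural inclusion $\forms[1]{\X} \hookrightarrow \forms[1]{\sA_\sL}$ (dual to the anchor surjection $a$) to an element $\tilde\alpha$, and then commutativity of the square displayed just above the statement gives
$$
a^*(\tilde\alpha) \;=\; a\bigl(\pss_\nabla(\tilde\alpha)\bigr) \;=\; \pss(\alpha).
$$
Hence $\sImg{\pss} \subset a^*(\forms[1]{\sA_\sL})$.

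For involutivity, I would invoke the fact recalled immediately before the proposition, that $a^* = a \circ \pss_\nabla$ is a morphism of sheaves of Lie algebras from $\forms[1]{\sA_\sL}$ (equipped with the bracket coming from the Mackenzie--Xu triangular Lie bialgebroid structure determined by $\ps_\nabla$) to $\der[1]{\X}$ (equipped with the usual commutator bracket). This says exactly
$$
[a^*\alpha,\, a^*\beta]_{\tshf{X}} \;=\; a^*\bigl([\alpha,\beta]_{\forms[1]{\sA_\sL}}\bigr)
$$
for any local sections $\alpha,\beta$ of $\forms[1]{\sA_\sL}$, so the bracket of two sections in the image remains in the image, which is precisely involutivity.

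There is no real obstacle left, since the substantive content has already been absorbed into the preceding structural facts: the construction of the bialgebroid $(\sA_\sL,\ps_\nabla)$, the induced Lie bracket on the dual $\forms[1]{\sA_\sL}$, and the identification of $a \circ \pss_\nabla$ with $\pss$ on $\forms[1]{\X}$. If anything deserves a second glance, it is keeping the bookkeeping straight between the four sheaves $\forms[1]{\sA_\sL}$, $\sA_\sL$, $\forms[1]{\X}$, $\der[1]{\X}$ and their various anchors and duals; but this is already organized by the jet exact sequence $0 \to \strc{X} \to \sA_\sL \to \der[1]{\X} \to 0$ together with its dual and the bialgebroid compatibilities stated above.
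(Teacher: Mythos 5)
Your proposal is correct and is essentially the paper's own argument: the proposition is stated there as an immediate consequence of the two facts you cite, namely that $a^* = a \circ \pss_\nabla$ is a morphism of sheaves of Lie algebras (giving involutivity of the image) and that the displayed square commutes (giving $\sImg{\pss} \subset a^*(\forms[1]{\sA_\sL})$). The paper additionally records, in a remark, a more hands-on variant using the connection vector field $Z$ of a local trivialization and the Hamiltonian vector fields, but your route coincides with the primary derivation.
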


\begin{remark}
A more down-to-earth proof is as follows: given a local trivialization $s \in \sL$, we obtain the connection vector field $Z \in \der[1]{\X}$, and it is clear that $a^*(\forms[1]{\sA_\sL})$ is generated over $\strc{\X}$ by $Z$ and $\sHam{\ps}$.  Since the connection is flat, $Z$ is Poisson and hence $\lie{Z}$ preserves $\sHam{\ps}$.  Therefore the distribution is involutive.\qed
\end{remark}

\begin{remark}
It follows that every invertible Poisson module induces a singular foliation of the analytic space associated to $\X$.  Since the canonical sheaf $\can_\X$ carries a natural Poisson module structure $\nabla^{\can_\X}$, every Poisson manifold comes equipped with \emph{two} natural singular foliations: there is the usual foliation by symplectic leaves, and then there is the image of $a^*$ for the canonical module, which we call the \defn{modular foliation}.  While we always have $\sImg{\pss}~\subset~\sImg{a^*}$, the inclusion is, in general, strict as we saw in \autoref{ex:can-adapt}.  It would be interesting to study this secondary foliation in greater detail. 
\qed
\end{remark}

\subsection{Residues of invertible Poisson modules}
\label{sec:defn-res}
In this section, we define the residues of a Poisson line bundle.  These residues are multiderivations supported on the degeneracy loci, and they encode features of the connection as well as the degeneracy loci themselves.

\begin{proposition}
Let $(\X,\ps)$ be a Poisson scheme, with an invertible Poisson module $(\sL,\nabla)$.  Then the morphism defined by the composition
$$
\xymatrix{\sJet[1]{\sL}\ar[r]^-{\nabla} & \der[1]{\X}(\sL)\ar[r]^-{\ps^k} & \der[2k+1]{\X}(\sL)}
$$
descends, upon restriction to $\D[2k]{\ps}$, to a morphism $\sL\to\der[2k+1]{{\D[2k]{\ps}}}(\sL)$, defining a multiderivation 
$$
\res{k}\nabla \in \cohlgy[0]{\D[2k]{\ps},\der[2k+1]{{\D[2k]{\ps}}}}.
$$
\end{proposition}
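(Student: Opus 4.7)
The plan is to verify the factorization by a local computation that splits into two stages. First, we show the composite $\phi : \sJet[1]{\sL} \to \der[2k+1]{\X}(\sL)$ annihilates the subsheaf $\forms[1]{\X}\otimes\sL \subset \sJet[1]{\sL}$ modulo sections that vanish when viewed in $\der[2k+1]{\D[2k]{\ps}}(\sL)$, so that $\phi$ descends to a map out of the quotient $\sL$. Second, we show that the image $\phi(s)=\ps^k\wedge \nabla s$ of a local section $s$ further descends to a multiderivation intrinsic to $\D[2k]{\ps}$, which amounts to checking that $\phi(s)$ evaluates into $\sI{\D[2k]{\ps}}\cdot\sL$ on any $(2k+1)$-form of the shape $df\wedge \omega$ with $f \in \sI{\D[2k]{\ps}}$ (these generate the kernel of $\forms[2k+1]{\X}|_{\D[2k]{\ps}} \to \forms[2k+1]{\D[2k]{\ps}}$).

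For the first stage, \autoref{rem:cnxn-jet} identifies $\nabla|_{\forms[1]{\X}\otimes\sL}$ with $-\pss\otimes\id{\sL}$, and the standard identity $\iota_\alpha\ps^{k+1} = (k+1)\,\pss(\alpha)\wedge\ps^k$ gives
$$\phi(\alpha\otimes s) \;=\; -\pss(\alpha)\wedge\ps^k \otimes s \;=\; -\tfrac{1}{k+1}\,\iota_\alpha\ps^{k+1}\otimes s.$$
Applied to any $(2k+1)$-form $\omega$, this produces $-\tfrac{1}{k+1}\,\ps^{k+1}(\alpha\wedge\omega)\otimes s$, which lies in $\sI{\D[2k]{\ps}}\cdot\sL$ by the very definition of the degeneracy ideal.

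For the second stage, write $\nabla s = Z\otimes s$ locally; since $\nabla$ is flat, $Z$ is a Poisson vector field. The identity $\iota_{df}\ps^k = k\,\pss(df)\wedge\ps^{k-1}$ combined with the graded Leibniz rule for the interior product yields
$$(\ps^k\wedge Z)(df\wedge\omega) \;=\; k\,(\pss(df)\wedge\ps^{k-1}\wedge Z)(\omega) \;+\; Z(f)\,\ps^k(\omega).$$
The second summand lies in $\sI{\D[2k]{\ps}}$ because $\D[2k]{\ps}$ is a strong Poisson subscheme (\autoref{prop:degen-strong}) and $Z$ is Poisson, so $Z(f) \in \sI{\D[2k]{\ps}}$. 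For the first summand, the Poisson ideal property of $\sI{\D[2k]{\ps}}$ forces the Hamiltonian $\pss(df)$ to satisfy $\pss(df)(g) = \{f,g\} \in \sI{\D[2k]{\ps}}$ for every local function $g$; hence contracting $\pss(df)$ against any $1$-form yields an element of $\sI{\D[2k]{\ps}}$, and expanding $(\pss(df)\wedge\ps^{k-1}\wedge Z)(\omega)$ by the duality pairing gives a sum that lies in $\sI{\D[2k]{\ps}}$ as well.

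Combining the two stages, $\phi$ descends to a well-defined morphism $\sL|_{\D[2k]{\ps}} \to \der[2k+1]{\D[2k]{\ps}}(\sL)$ of sheaves on $\D[2k]{\ps}$; since $\sL$ is invertible, this datum is equivalent to a global section $\res{k}\nabla \in \cohlgy[0]{\D[2k]{\ps},\der[2k+1]{\D[2k]{\ps}}}$. The only real obstacle is notational bookkeeping: finding the interior-product decomposition that cleanly separates the $Z(f)$-piece (controlled by the strong-Poisson property) from the Hamiltonian-wedge piece (controlled by the Poisson-ideal property). Once this split is seen, the vanishing of $\ps^{k+1}$ on $\D[2k]{\ps}$ together with the Poisson stability of its ideal close the argument.
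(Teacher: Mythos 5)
Your proof is correct and is essentially the paper's argument: the descent through the jet sequence rests on the same identity $\pss(\alpha)\wedge\ps^k=\tfrac{1}{k+1}\iota_\alpha\ps^{k+1}$, whose evaluations land in the degeneracy ideal, and the tangency to $\D[2k]{\ps}$ rests on the connection vector field being Poisson together with the strong Poisson property of the degeneracy locus (\autoref{prop:degen-strong}). The only difference is one of packaging: where the paper first invokes \autoref{prop:strong-module} to restrict the module to $\D[2k]{\ps}$ and then descends along the jet sequence on the subscheme, you work on $\X$ and verify by hand, via evaluation against forms $df\wedge\omega$ with $f$ in the ideal, the fact that this lemma (combined with the conormal description of $\forms[2k+1]{{\D[2k]{\ps}}}$) encapsulates.
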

\begin{proof}
Let $\Y=\D[2k]{\ps}$. We abuse notation and denote the restricted Poisson structure by $\ps \in \der[2]{\Y}$.  Since $\Y$ is a strong Poisson subscheme, \autoref{prop:strong-module} guarantees that $\sL|_\Y$ is a Poisson module, and so the connection on $\sL$ restricts to a morphism $\nabla|_\Y : \sJet[1][\Y]{\sL} \to \der[1]{\Y}(\sL)$.  Consider the commutative diagram
$$
\xymatrix{
\forms[1]{\Y}\otimes \sL|_\Y \ar[d] \ar[dr]^{-\pss\otimes \id{\sL}} & & \\
\sJet[1][\Y]{\sL} \ar[d] \ar[r]^-{\nabla|_\Y} & \der[1]{\Y}(\sL) \ar[r]^-{\ps^k} & \der[2k+1]{\Y}(\sL) \\
\sL|_\Y \ar@{-->}[urr]& &\\
}
$$
By exactness of the jet sequence, the composition $\ps^k\circ\nabla|_\Y$ will descend to give the dashed arrow we seek provided that $\ps^k \wedge \pss(\xi) = 0$ for all $\xi \in \forms[1]{\Y}$. But using the contraction $i:\forms[1]{\Y} \otimes \der{\Y} \to \der[\bullet-1]{\Y}$, we compute
$$
\pss(\xi)\wedge \ps^k = (i_\xi\ps)\wedge\ps^k =  \tfrac{1}{k+1} i_\xi(\ps^{k+1}),
$$
which vanishes identically on $\Y$ since $\ps^{k+1}|_\Y = 0$.
\end{proof}

\begin{definition}\label{def:mod-tensor}
The section
$$
\res{k}{\nabla} \in \cohlgy[0]{\D[2k]{\ps},\der[2k+1]{{\D[2k]{\ps}}} }
$$
defined by an invertible Poisson module $(\sL,\nabla)$ is called the \defn{$k^{th}$ residue of $\nabla$}.  The $k^{th}$ residue of the canonical module $\omega_\X$ is called the \defn{$k^{th}$ modular residue of $\ps$}, and denoted
$$
\mres{k}{\ps} \in \cohlgy[0]{\D[2k]{\ps},\der[2k+1]{{\D[2k]{\ps}}} }.
$$ 
\end{definition}

\begin{remark}
The modular residue is so named because it locally has the form $Z \wedge \ps^k$, where $Z$ is the modular vector field of $\ps$ with respect to a local trivialization.  In \autoref{sec:elliptic}, we will see a family of examples where the modular residues are all non-vanishing, giving trivializations of $\der[2k+1]{{\D[2k]{\ps}}}$.\qed
\end{remark}

\begin{remark}
By the argument above, any Poisson connection on the $\strc{X}$-module $\sE$ defines a section of $\der[2k+1]{\X}(\sEnd{\sE})|_{\D[2k]{\ps}}$.  Invertibility and flatness imply that this section actually defines a multi-derivation on $\D[2k]{\ps}$.\qed
\end{remark}

\begin{remark}\label{rem:higgs-res}
The residue of $\nabla$ can also be described in terms of the normal Higgs field $\Phi_\nabla \in \cohlgy[0]{\X,\sN{}}$: since the rank of $\ps$ on $\D[2k]{\ps}$ is $\le 2k$, the exterior product of $\Phi_\nabla$ and $\ps^k$ gives a well-defined section of $\der[2k+1]{\X}|_{\D[2k]{\ps}}$, and we have
$$
i_*\res{k}{\nabla} = \Phi_\nabla|_{\D[2k]{\ps}} \wedge \ps|^k_{\D[2k]{\ps}}  \in \der[2k+1]{\X}|_{\D[2k]{\ps}},
$$
where $i : \D[2k]{\ps} \to \X$ is the inclusion.  If $Z \in \der[1]{\X}$ is the connection vector field associated to a local trivialization of the module, we have
$$
i_*\res{k}{\nabla} = (Z \wedge \ps^k)|_{\D[2k]{\ps}}
$$
on the domain of $Z$.\qed
\end{remark}

\section{The degeneracy loci of a Poisson module}
\label{sec:resolve}
In this section, we suppose that $(\X,\ps)$ is a smooth, connected Poisson scheme.
Recall from \autoref{sec:mod-dist} that an invertible Poisson module $(\sL,\nabla)$ gives rise to a canonical section $\ps_\nabla \in \cohlgy[0]{\X,\sA^2_\sL}$.  We now study the degeneracy loci $\AD[2k]{\nabla}$ of this section.  These loci provide a new family of Poisson subschemes with the property that the Poisson module is flat along their $2k$-dimensional symplectic leaves.

\begin{remark}\label{rem:adapt-lb}
One could also think of $\ps_\nabla$ as the Poisson structure induced on the total space of $\sL^\vee$.  If $\Y$ is the principal $\complex^*$-bundle obtained by removing the zero section of $\sL^\vee$, then $\AD[2k]{\nabla}$ is the image of the $2k^{th}$ degeneracy locus of the Poisson structure on $\Y$ under the projection.\qed
\end{remark}

The following proposition shows that the degeneracy loci of $\ps$ and $\ps_\nabla$ are compatible, and explains the relationship between $\ps_\nabla$ and the adaptedness of the module in the sense of \autoref{def:adapt}:
\begin{proposition}\label{prop:adapt-locus}
The degeneracy locus $\AD[2k]{\nabla}$ of an invertible Poisson module $(\sL,\nabla)$ is the zero locus	of the section
$$
i_* \res{k}{\nabla} \in \cohlgy[0]{\D[2k]{\ps},\der[2k+1]{\X}|_{\D[2k]{\ps}}}
$$
where $i : \D[2k]{\ps} \to \X$ is the embedding.  In particular, we have inclusions
$$
\D[2k-2]{\ps} \subset \AD[2k]{\nabla} \subset \D[2k]{\ps}.
$$
Moreover, $\nabla$ restricts to a Poisson module on $\AD[2k]{\nabla}$ which is adapted on the open set $\AD[2k]{\nabla} \setminus \D[2k-2]{\ps}$.
\end{proposition}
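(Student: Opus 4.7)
The plan is to reduce everything to a local calculation in a trivialization of $\sL$. Choose a local trivialization $s\in\sL$, let $Z\in\tshf{\X}$ be the associated connection vector field, and let $\epsilon\in\sA_\sL$ denote the generator of $\ker a$ that acts as the identity on $\sL$. The trivialization determines a splitting $\sA_\sL \cong \strc{\X}\cdot\epsilon \oplus \tshf{\X}$, and a direct computation from the defining identities $\cbrac{f,g}_\nabla = \ps(df,dg)$ and $\cbrac{f,s}_\nabla = -\nabla_{df}s$ shows
\[
\ps_\nabla \;=\; \ps + \epsilon\wedge Z \quad\text{in } \sA^2_\sL.
\]
Establishing this local form is the main obstacle; everything else follows by straightforward manipulation of wedge products.

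Since $\epsilon\wedge\epsilon=0$, we obtain $\ps_\nabla^{k+1} = \ps^{k+1} + (k+1)\,\epsilon\wedge(Z\wedge\ps^k)$, so in the induced local splitting of $\wedge^{2k+2}\forms[1]{\sA_\sL}$ the ideal $\sI{\AD[2k]{\nabla}}$ is generated by the images of the two maps $\ps^{k+1}\colon\forms[2k+2]{\X}\to\strc{\X}$ and $Z\wedge\ps^k\colon\forms[2k+1]{\X}\to\strc{\X}$. The first set of generators cuts out $\D[2k]{\ps}$, giving the inclusion $\AD[2k]{\nabla}\subset\D[2k]{\ps}$; on $\D[2k-2]{\ps}$ the vanishing $\ps^k=0$ makes both sets of generators vanish, giving $\D[2k-2]{\ps}\subset\AD[2k]{\nabla}$. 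By the local formula $i_*\res{k}{\nabla} = (Z\wedge\ps^k)|_{\D[2k]{\ps}}$ of \autoref{rem:higgs-res}, the second set of generators reduces modulo $\sI{\D[2k]{\ps}}$ to the image of $i_*\res{k}{\nabla}$, so $\AD[2k]{\nabla}$ is the scheme-theoretic zero locus of $i_*\res{k}{\nabla}$.

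To show that $\nabla$ descends to a Poisson module on $\AD[2k]{\nabla}$, I would verify that both generator sets are preserved by Hamiltonian vector fields $X_f=\pss(df)$ and by the local connection vector field $Z$. Since flatness of $\nabla$ makes $Z$ Poisson, the standard identity $[X_f,Z]=-X_{Z(f)}$ together with the computation from the proof of the residue proposition gives
\[
\lie{X_f}(Z\wedge\ps^k) \;=\; [X_f,Z]\wedge\ps^k \;=\; -\tfrac{1}{k+1}\,i_{d(Z(f))}\ps^{k+1},
\]
which lies in the image of $\ps^{k+1}$; the remaining identities $\lie{Z}(Z\wedge\ps^k)=0$ and $\lie{X_f}\ps^{k+1}=\lie{Z}\ps^{k+1}=0$ are immediate. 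Thus both $\ps$ and $\nabla$ descend to $\AD[2k]{\nabla}$, yielding a flat Poisson connection there. Finally, on the open subscheme $U=\AD[2k]{\nabla}\setminus\D[2k-2]{\ps}$ the Poisson structure has rank exactly $2k$ at every point, so $\ps^k$ pointwise generates the line $\wedge^{2k}\sImg{\pss}$. The relation $Z\wedge\ps^k=0$ on $\AD[2k]{\nabla}$ therefore forces $Z\in\sImg{\pss}$ pointwise on $U$, which is precisely the vanishing of the normal Higgs field of the restricted connection, so $\nabla|_U$ is adapted.
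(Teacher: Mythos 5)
Your proof is correct and follows essentially the same route as the paper: expand $\ps_\nabla^{k+1}$ in a local trivialization to obtain $\ps^{k+1}$ together with the component $Z\wedge\ps^k$, identify the latter with $i_*\res{k}{\nabla}$, check invariance of the ideal under Hamiltonian (and connection) vector fields using $\lie{Z}\ps=0$, and conclude adaptedness pointwise where $\ps^k\ne 0$. The only differences are cosmetic: you work with the explicit splitting $\sA_\sL\cong\strc{\X}\oplus\tshf{\X}$ rather than the extension $0\to\der[2k+1]{\X}\to\sA^{2k+2}_\sL\to\der[2k+2]{\X}\to 0$, and you spell out the check (left implicit in the paper) that the connection vector field preserves the ideal, so that the module genuinely restricts.
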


\begin{proof}
Because of the exact sequence~\eqref{exactjet}, we have the extension
$$
0 \to \der[2k+1]{\X} \to {\sA^{2k+2}_\sL} \to \der[2k+2]{\X} \to 0.
$$
The image of $\ps_\nabla^{k+1}$ in $\der[2k+2]{\X}$ is simply $\ps^{k+1}$.  Hence the zero scheme of $\ps_\nabla^{k+1}$ is contained in $\D[2k]{\ps}$, and is given by the vanishing of the section
$$
s = \ps_\nabla^{k+1}|_{\D[2k]{\ps}} \in \der[2k+1]{\X}|_{\D[2k]{\ps}}.
$$
One readily checks using the definition of $\ps_\nabla$ that $s$ has the form
\begin{align}
s = Z \wedge \ps^{k}|_{\D[2k]{\ps}}, \label{eqn:ad-res}
\end{align}
where $Z$ is a connection vector field for $\nabla$ with respect to any local trivialization.  But then $s = i_*\res{k}{\nabla}$ by \autoref{rem:higgs-res}, and so $\AD[2k]{\nabla}$ is the zero scheme of $i_*\res{k}{\nabla}$, as claimed.  Since $s$ has a factor $\ps^k$, we have $\D[2k-2]{\ps} \subset \AD[2k]{\nabla}$.

We wish to show that the zero locus of $s$ is a Poisson subscheme.  Given a function $f \in \strc{X}$, we have
\begin{align*}
\lie{\pss(df)}(\ps^k\wedge Z) &= (\lie{\pss(df)} \ps^k) \wedge Z + \ps^k \wedge (\lie{\pss(df)}Z )\\
&= - \ps^k \wedge \lie{Z} \pss(df) \\
&= - \ps^k \wedge \pss(\lie{Z}df) \\
&= 0 \mathrm{\ \ mod\ }\sI{\D[2k]{\ps}},
\end{align*}
where we have used the identities $\lie{\pss(df)}\ps = 0$ and $\lie{Z}\ps = 0$.  It follows that the ideal defining $\AD[2k]{\nabla}$ is preserved by all Hamiltonian vector fields, and hence $\AD[2k]{\nabla}$ is a Poisson subscheme.

To see that the module is adapted on $\AD[2k]{\nabla} \setminus \D[2k-2]{\ps}$, simply notice that if at some point $x \in \D[2k]{\ps}$ we have $\ps^k_x \ne 0$, then $Z_x$ lies in the image of $\pss_x$ if and only if $Z_x \wedge \ps^k_x = 0$, ie. $x \in \AD[2k]{\nabla}$.
\end{proof}

\begin{example}\label{ex:euler-planes}
Consider the Poisson structure $\ps = (x\cvf{x} + y\cvf{y})\wedge \cvf{z}$ on the affine space $\Aff{3} = \Spec(\complex[x,y,z])$.  Denote by $\Z \subset \Aff{3}$ the $z$-axis, given by $x=y=0$.  The degeneracy loci are $\D[2]{\ps} = \Aff{3}$ and $\D[0]{\ps} = \Z$.  Every plane $\W$ containing $\Z$ is a Poisson subvariety, and $\W \setminus \Z$ is a symplectic leaf.

The vector field
$$
U = x\cvf{x}
$$
is Poisson.  (Indeed, any $\complex$-linear combination of $x \cvf{x}, y\cvf{x}, x\cvf{y}$ and $y\cvf{y}$ is Poisson.)  Hence $U$ defines a Poisson module structure on $\strc{X}$ by the formula
$$
\nabla f = -\pss(df) \otimes 1 + U \otimes f.
$$
The degeneracy locus $\AD[2]{\nabla}$ is given by the vanishing of the tensor
$$
U \wedge \ps = xy\cvf{y}\wedge\cvf{z},
$$
and hence  $\AD[2]{\nabla}$ is the union of the $x=0$ and $y=0$ planes, which are indeed Poisson subschemes.  We therefore see that the inclusions
$$
\D[0]{\ps} \subset \AD[2]{\nabla} \subset \D[2]{\ps}
$$
are strict in this case.\qed
\end{example}

The expected dimensions and fundamental classes of the degeneracy loci of skew morphisms are described by the following result in intersection theory. 
\begin{theorem}[\cite{Jozefiak1979,Jozefiak1981,Harris1984a}]\label{prop:ad-chow}
Let $\X$ be a connected scheme which is smooth, (or, more generally, Cohen-Macaulay).  Let $\sE$ be a vector bundle over $\X$ of rank $r$, and $\rho \in \cohlgy[0]{\X,\ext[2]{\sE}}$ a skew form.  For $k \ge 0$ an integer, let $d(r,k) = {r-2k \choose 2}$, and for $0\le j \le n$, denote by $c_j = c_j(\sE) \in \chow[j]{\X}$ the Chern classes of $\sE$ in the Chow ring of $\X$.  If $\D[2k]{\rho}$ is non-empty, then each of its components has codimension $\le d(r,k)$ in $\X$.  Moreover, the class
$$
\det\rbrac{
\begin{matrix}
c_{n-k} 	& c_{n-k+1}	& \cdots &\\
c_{n-k-2} 	& c_{n-k-1} & 		 &\\
\vdots		&			& \ddots & \\
			&			&		& c_1
\end{matrix}}
\in \chow[d(r,k)]{\X}
$$
is supported on $\D[2k]{\rho}$ and is equal to the fundamental class of $\D[2k]{\rho}$ if the dimensions agree.
\end{theorem}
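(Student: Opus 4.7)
This statement synthesizes the classical intersection-theoretic results on degeneracy loci of skew morphisms due to J\'ozefiak--Pragacz and Harris--Tu. I would prove it by combining a linear-algebra calculation in the universal case with a Grassmannian-bundle desingularization of $\D[2k]{\rho}$.

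To establish the codimension bound, I would first analyse the universal model. Viewing $\rho$ as a skew morphism $\rho^\sharp : \sE^\vee \to \sE$, the locus $\D[2k]{\rho}$ is the preimage, under the bundle section $\X \to \ext[2]{\sE}$ determined by $\rho$, of the universal rank-$\le 2k$ stratum in the affine space of skew $r \times r$ forms. A Darboux-type argument shows this stratum is irreducible of codimension exactly $\binom{r-2k}{2}$: generically, a skew form of rank $\le 2k$ is parametrised by its $(r-2k)$-dimensional radical together with a non-degenerate skew form on a complementary $2k$-plane, yielding the dimension count. The Cohen--Macaulay hypothesis on $\X$ then forces the codimension inequality to persist on every non-empty $\D[2k]{\rho}$.

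For the fundamental class, I would resolve $\D[2k]{\rho}$ by the incidence variety
$$
\widetilde{\spc{D}} \,=\, \set{(x,V) \in \spc{Gr}(r-2k,\sE)}{V \subset \spc{ker}(\rho^\sharp_x)}
$$
sitting inside the relative Grassmannian bundle over $\X$. Writing $\sS$ for the tautological rank-$(r-2k)$ subbundle and $\pi$ for the projection, $\widetilde{\spc{D}}$ is the zero scheme of the composite
$$
\sS \hookrightarrow \pi^*\sE \xrightarrow{\pi^*\rho^\sharp} \pi^*\sE^\vee \twoheadrightarrow \sS^\vee,
$$
which by skew-symmetry takes values in $\ext[2]{\sS^\vee}$. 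When $\widetilde{\spc{D}}$ has the expected codimension $\binom{r-2k}{2}$, its class is therefore the top Chern class of $\ext[2]{\sS^\vee}$, and $\pi$ restricts to a birational morphism onto $\D[2k]{\rho}$.

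The stated formula would then follow by Gysin pushforward. The top Chern class of $\ext[2]{\sS^\vee}$ is a universal symmetric polynomial in the Chern roots of $\sS$; applying $\pi_*$ and using the tautological sequence $0 \to \sS \to \pi^*\sE \to \mathcal{Q} \to 0$ expresses $\pi_*[\widetilde{\spc{D}}]$ as a polynomial in $c_\bullet(\sE)$. The main obstacle is precisely this last combinatorial step: recasting the Gysin pushforward as the displayed determinant of Chern classes. Rather than reprove this by hand, I would invoke the Schur--Pfaffian identity of Pragacz, which realises the Gysin image of the Euler class of $\ext[2]{\sS^\vee}$ on a Grassmannian bundle as exactly the determinant appearing in the theorem.
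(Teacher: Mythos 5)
This statement is not proved in the paper at all: it is quoted as a known result of J\'ozefiak--Pragacz and Harris--Tu \cite{Jozefiak1979,Jozefiak1981,Harris1984a}, so there is no internal argument to compare against. Your sketch is, in outline, the standard proof from those sources: analyse the universal Pfaffian stratum, desingularize by the incidence variety in a Grassmannian bundle, identify it as the zero scheme of a section of $\Lambda^2\sS^\vee$, and evaluate the Gysin pushforward of its Euler class by Pragacz's Schur--Pfaffian formula to obtain the displayed determinant. That architecture is correct and is exactly what the cited references do.

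Two points need repair. First, the codimension bound does not come from the Cohen--Macaulay hypothesis on $\X$, and the step ``the CM hypothesis forces the codimension inequality to persist'' would not survive scrutiny. The correct mechanism is the height bound for Pfaffian ideals (the skew analogue of the Eagon--Northcott/Macaulay bound, due to Kleppe--Laksov and J\'ozefiak--Pragacz): in any Noetherian local ring, the ideal generated by the $(2k+2)$-Pfaffians of a skew $r\times r$ matrix has height at most $\binom{r-2k}{2}$; applying this at the generic point of each component of $\D[2k]{\rho}$ gives the bound with no extra hypotheses. (If instead you try to deduce it from the resolution $\widetilde{\spc{D}}$, beware that a component of $\D[2k]{\rho}$ may lie entirely inside $\D[2k-2]{\rho}$, where the fibres of $\pi$ are positive-dimensional, so the ``birational over the rank-exactly-$2k$ locus'' argument does not bound every component.) The Cohen--Macaulay hypothesis is what the \emph{last} clause uses: when the codimension equals $\binom{r-2k}{2}$, CM-ness of $\X$ ensures the pushed-forward localized Euler class is the fundamental class of the degeneracy scheme, with no embedded components or excess multiplicities. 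Second, a duality slip: since $\rho\in\cohlgy[0]{\X,\ext[2]{\sE}}$, the induced map is $\rho^\sharp:\sE^\vee\to\sE$ and its kernel sits inside $\sE^\vee$, so the incidence variety should live in the Grassmannian of rank-$(r-2k)$ subbundles of $\sE^\vee$, with the skew composite $\sS\hookrightarrow\pi^*\sE^\vee\to\pi^*\sE\twoheadrightarrow\sS^\vee$; as written, your composite presumes $\rho\in\ext[2]{\sE^\vee}$. This only affects bookkeeping (which tautological Chern roots enter the Gysin computation, hence signs), but it should be fixed for the final formula to come out in terms of $c_j(\sE)$ as stated.
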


Bondal's conjecture suggests that for the Poisson structure $\ps$, the codimensions predicted by \autoref{prop:ad-chow} are likely to be much higher than the actual codimension of $\D[2k]{\ps}$.  We therefore expect similar behaviour from the degeneracy loci $\AD[2k]{\nabla}$ of a module.  However, there is one particular case in which this estimate of the codimension will be useful to us:
\begin{corollary}\label{cor:adapt-codim3}
Let $(\X,\ps)$ be a smooth connected Poisson scheme of even dimension $2n$ and for $1 \le j \le 2n$, let $c_j = c_j(\tshf{X}) \in \chow[j]{\X}$ be its $j^{th}$ Chern class.  If $(\sL,\nabla)$ is an invertible Poisson module and $\AD[2n-2]{\nabla}$ is non-empty, then every component of $\AD[2n-2]{\nabla}$ has codimension $\le 3$ in $\X$.

If the codimension of every component is exactly equal to three, then
$$
[\AD[2n-2]{\nabla}]=c_1c_2-c_3,
$$
and there is a locally-free resolution
$$
\xymatrix{
0 \ar[r] &  \can_\X^{2} \ar[r]^-{\ps_\nabla^n} & \sA_\sL^\vee \otimes \can_\X \ar[r]^-{\pss_\nabla\otimes 1 } & \sA_\sL\otimes \can_\X \ar[r]^-{\ps_\nabla^n} & \strc{X} \ar[r] & \strc{\AD[\mathrm{2}\mathit{n}-\mathrm{2}]{\nabla}} \ar[r] & 0
}
$$
where
$$
\ps^n_\nabla \in {\sA^{2n}_\sL} \cong \sA_\sL^\vee \otimes \can_\X
$$
is the $n^{th}$ exterior power.  In particular, $\AD[2n-2]{\nabla}$ is Gorenstein with dualizing sheaf
$$
\can_{\AD[2n-2]{\nabla}} \cong \can_\X^{-1}|_{\AD[2n-2]{\nabla}}.
$$
\end{corollary}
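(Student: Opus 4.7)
The plan is to identify this corollary as a direct application of \autoref{prop:ad-chow} to the skew form $\ps_\nabla \in \cohlgy[0]{\X, \ext[2]{\sA_\sL}}$.  The Atiyah extension~\eqref{exactjet} shows that $\sA_\sL$ has rank $r = 2n+1$, so setting $k = n-1$ gives $d(r,k) = \binom{3}{2} = 3$, which supplies the codimension bound.  The same extension also implies $c_j(\sA_\sL) = c_j(\tshf{\X}) = c_j$ for every $j$, and substituting these into the determinantal formula of \autoref{prop:ad-chow} in the case $r-2k = 3$ yields the Pfaffian class $c_1 c_2 - c_3$ as the fundamental class of $\AD[2n-2]{\nabla}$ whenever the codimension is exactly three.

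Next I would exhibit the four-term locally-free resolution as an instance of the Buchsbaum--Eisenbud structure theorem for codimension-three Gorenstein ideals, or equivalently the J\'ozefiak--Pragacz resolution of the scheme of skew-symmetric $(2n+1)\times(2n+1)$ matrices of rank at most $2n-2$.  Concretely, viewing $\ps_\nabla$ as the skew morphism $\pss_\nabla : \forms[1]{\sA_\sL} \to \sA_\sL$, the exterior power $\ps_\nabla^n$ lies in $\sA^{2n}_\sL$, where it supplies both the map $\sA_\sL \otimes \can_\X \to \strc{\X}$ on the right end and, by transposition, the map $\can_\X^2 \to \sA_\sL^\vee \otimes \can_\X$ on the left end; the middle map is $\pss_\nabla \otimes \id{\can_\X}$.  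Exactness is the classical statement that this complex resolves the universal Pfaffian locus, and Buchsbaum--Eisenbud acyclicity transfers exactness to our setting whenever $\AD[2n-2]{\nabla}$ attains the expected codimension three.

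The Gorenstein claim and the formula for the dualizing sheaf then follow from a self-duality of this resolution.  Writing the four nonzero terms as $F_0 = \strc{\X}$, $F_1 = \sA_\sL \otimes \can_\X$, $F_2 = \sA_\sL^\vee \otimes \can_\X$, and $F_3 = \can_\X^2$, one verifies term-by-term that $F_i^\vee \cong F_{3-i} \otimes \can_\X^{-2}$, so dualizing the resolution produces the same complex twisted by $\can_\X^{-2}$.  Consequently $\sExt[i]{\strc{\AD[2n-2]{\nabla}},\strc{\X}}[\strc{\X}]$ vanishes for $i < 3$, and for $i = 3$ it is the invertible sheaf $\can_\X^{-2}|_{\AD[2n-2]{\nabla}}$.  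Invertibility of this top Ext sheaf is exactly the Gorenstein condition, and the adjunction formula for dualizing sheaves then identifies
$$
\can_{\AD[2n-2]{\nabla}} \cong \can_\X|_{\AD[2n-2]{\nabla}} \otimes \can_\X^{-2}|_{\AD[2n-2]{\nabla}} \cong \can_\X^{-1}|_{\AD[2n-2]{\nabla}},
$$
as claimed.

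The principal technical hurdle is proving exactness of the four-term complex in our twisted geometric setting.  This is the Pfaffian analogue of the Eagon--Northcott / Hilbert--Burch theorem; once established in the universal $3 \times 3$ principal-Pfaffian case, it transfers to our situation via the classifying map determined by $\ps_\nabla$ together with the standard depth argument that underlies the Buchsbaum--Eisenbud acyclicity criterion.
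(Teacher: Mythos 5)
Your proposal is correct and takes essentially the same route as the paper: apply \autoref{prop:ad-chow} to the skew form $\ps_\nabla$ on the rank-$(2n+1)$ Atiyah algebroid $\sA_\sL$ (whose Chern classes coincide with $c_j(\tshf{X})$ by the extension \eqref{exactjet}) to get the codimension bound and the class $c_1c_2-c_3$, then invoke the Buchsbaum--Eisenbud codimension-three Pfaffian structure theory for the resolution and the Gorenstein property. The only cosmetic difference is that you re-derive the dualizing sheaf from the self-duality $F_i^\vee \cong F_{3-i}\otimes\can_\X^{-2}$ of the complex, whereas the paper cites Okonek's global formula for skew-symmetric bundle maps together with the observation $\det\sA_\sL \cong \can_\X^{-1}$.
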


\begin{proof}
In this case, the vector bundle is the Atiyah algebroid $\sA_\sL$, which has rank odd rank $2n+1$.  Since $\sA_\sL$ is an extension
$$
\xymatrix{
0 \ar[r] & \strc{X} \ar[r] & \sA_\sL \ar[r] & \der[1]{\X} \to 0,
}
$$
we have $c_j(\sA_\sL) = c_j(\X)$ for all $j$.  The statements about the codimension and fundamental class are therefore contained in \autoref{prop:ad-chow}.

Locally, we are interested in the vanishing of the submaximal Pfaffians of a $(2n+1)\times(2n+1)$ matrix.  The fact that such degeneracy loci are Gorenstein when the codimension is three, and that the free resolutions have the form in question, was proven by Buchsbaum and Eisenbud~\cite{Buchsbaum1977}.  The global version of the resolution for skew-symmetric bundle maps is described by Okonek in \cite{Okonek1994}; we simply apply the formula therein, noting that for the Atiyah algebroid $\sA_\sL$, we have $\det \sA_\sL \cong \can_\X^{-1}$.
\end{proof}

\begin{remark}
We originally found this three-term resolution in the case when $\sL = \can_\X^{-1}$ by using the geometry of logarithmic vector fields.  We are grateful to Ragnar-Olaf Buchweitz for suggesting that these subschemes might be Gorenstein and referring us to the work of Buchsbaum and Eisenbud, which allowed us to generalize and simplify the result.\qed
\end{remark}

\section{Characteristic classes of Poisson modules}
\label{sec:char-class}
In this section we discuss the Chern classes of Poisson modules.  In particular, we give a vanishing theorem for adapted modules, which was our original motivation for the definition of adaptedness.  Using this result, we obtain lower bounds on the dimension of the degeneracy locus of the Poisson structure.  We will need to use some basic results regarding the Borel-Moore homology of complex varieties, which we presently recall. We refer the reader to \cite[Chapter 19]{Fulton1998} for more details.

If $\X$ is a scheme of finite type over $\complex$, we denote by $\X^{an}$ the associated complex analytic space, and by $\cohlgy{\X} = \cohlgy{\X^{an},\complex}$ its singular cohomology.  A locally-free sheaf $\sE$ on $\X$ then has Chern classes
$$
\chern[p]{\sE} \in \cohlgy[2p]{\X}.
$$
We denote by $\Chern{\sE}$ the subring generated by the Chern classes.

We denote by $\hlgy{\X}$ the Borel-Moore homology groups of $\X^{an}$ with complex coefficients.  These groups satisfy the following properties:
\begin{enumerate}
\item If $\Y \subset \X$ is a closed subscheme with complement $\U = \X \setminus \Y$, there is a long exact sequence
\begin{equation}\label{eqn:les-bm-hlgy}
\xymatrix{
\cdots \ar[r] & \hlgy[j]{\Y} \ar[r] & \hlgy[j]{\X} \ar[r] & \hlgy[j]{\U} \ar[r] & \hlgy[j-1]{\Y} \ar[r] & \cdots 
}
\end{equation}
\item If $\dim \X = n$, then $\hlgy[j]{\X} = 0$ for $j > 2n$, and $\hlgy[2n]{\X}$ is the vector space freely generated by the $n$-dimensional irreducible components of $\X$.  Moreover, there is a fundamental class $[\X] \in \hlgy[2n]{\X}$ which is a linear combination of these generators, with coefficients given by the multiplicities of the components.
\item There are cap products $\hlgy[j]{\X} \otimes \cohlgy[k]{\X} \to \hlgy[j-k]{\X}$ satisfying the usual compatibilities.
\end{enumerate}

\begin{theorem}\label{thm:bottvan}
Let $(\X,\ps)$ be a regular Poisson scheme, so that $\ps$ has constant rank $2k$.  If $\sE$ is a locally-free, adapted Poisson module, then its Chern ring vanishes in degree $> 2(n-2k)$:
$$
\Chern[p]{\sE} = 0 \subset \cohlgy[p]{\X}
$$
if $p > 2(n-2k)$.
\end{theorem}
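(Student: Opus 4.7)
The plan is to reduce the theorem directly to Bott's classical vanishing theorem for foliated vector bundles. The adaptedness hypothesis is precisely the Poisson-geometric incarnation of the hypothesis of Bott's theorem, so once the dictionary is set up there is no additional content to prove.

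First, I would check that a regular Poisson structure yields a genuine holomorphic foliation. Since $\pss : \forms[1]{\X}\to\tshf{\X}$ has constant rank $2k$, its image $\sF = \sImg{\pss}$ is a locally free subsheaf of $\tshf{\X}$ of rank $2k$, and $\sN = \tshf{\X}/\sF$ is locally free of rank $q := n-2k$. Because $\sF$ is spanned (locally) by Hamiltonian vector fields and $[\pss(df),\pss(dg)] = \pss(d\{f,g\})$, the subsheaf $\sF$ is involutive, so it defines a regular holomorphic foliation of $\X^{an}$ whose leaves are the symplectic leaves.

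Next, I would unpack the adaptedness hypothesis. By \autoref{def:adapt}, the connection $\nabla$ factors through the inclusion $\sF\otimes\sE\hookrightarrow\tshf{\X}\otimes\sE \cong \der[1]{\X}(\sE)$, giving a $\complex$-linear morphism
$$
\bar\nabla : \sE \to \sF\otimes\sE.
$$
The Leibniz rule for $\nabla$ together with the flatness condition $\nabla_{d\{f,g\}} = [\nabla_{df},\nabla_{dg}]$ translates into the assertion that $\bar\nabla$ is a flat partial connection on $\sE$ along the foliation $\sF$, i.e. a leafwise-flat (holomorphic) connection in the sense of Bott. Thus $(\sE,\bar\nabla)$ is a foliated bundle over the foliated manifold $(\X^{an},\sF)$ of complex codimension $q = n-2k$.

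Finally, I would invoke Bott's vanishing theorem from \cite{Bott1972}: for any smooth complex vector bundle equipped with a flat partial connection along a regular holomorphic foliation of complex codimension $q$, the Chern subring of the singular cohomology of the underlying space vanishes in real degree $> 2q$. The argument proceeds by extending $\bar\nabla$ to a smooth (or holomorphic) ``Bott connection'' on $\sE$ whose curvature has no leaf components, so that the Chern--Weil representatives of the Chern classes factor through forms pulled back from the conormal sheaf $\sN^\vee$; wedge products of more than $q$ such forms vanish for dimensional reasons. Applied with $q = n-2k$, this gives exactly $\Chern[p]{\sE} = 0$ in $\cohlgy[p]{\X}$ for $p > 2(n-2k)$.

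The main (and essentially only) obstacle is the second step, namely carefully verifying that the Poisson flatness condition on $\nabla$ corresponds exactly to the integrability condition for $\bar\nabla$ as a partial connection along $\sF$; this is a direct but slightly fiddly check using the identity $[\pss(df),\pss(dg)] = \pss(d\{f,g\})$. Once this dictionary is established, the remainder of the argument is a citation of Bott's theorem in its standard form, applied to $\sE^{an}$ on $\X^{an}$.
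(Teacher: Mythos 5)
Your proposal is correct and follows essentially the same route as the paper: identify the constant-rank image of $\pss$ as an involutive subbundle $\sF$, reinterpret adaptedness plus flatness as a flat partial connection $\sE\to\sF^\vee\otimes\sE$ along the symplectic foliation, and then run the proof of Bott's vanishing theorem with $\sE$ in place of the normal bundle. The extra Chern--Weil details you sketch are exactly what the paper delegates to the citation of \cite{Bott1972}.
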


\begin{proof}
Since $\X$ is smooth and $\ps$ has constant rank, the image of $\ps$ defines an involutive subbundle $\sF{} \subset \tshf{X}$.  Since $\sE$ is an adapted Poisson module, it admits a flat partial connection $\nabla : \sE \to \sFd{}\otimes \sE$ along $\sF$.  Therefore the proof of Bott's vanishing theorem \cite{Bott1972} applies (substituting $\sE$ for the normal bundle $\tshf{X}/\sF{}$) and establishes the theorem.
\end{proof}

Baum and Bott~\cite{Baum1972} applied Bott's vanishing theorem to a singular foliation, obtaining ``residues'' in the homology of the singular set which are Poincar\'e dual to the Chern classes of the normal sheaf of the foliation.  In the same way, we apply \autoref{thm:bottvan} to a locus $\Y$ on which a Poisson module $\sL$ is generically adapted, obtaining residues in the homology of the Poisson singular locus of $\Y$ which are dual to the Chern classes of $\sL$:
\begin{corollary}\label{cor:adapt-residue}
Let $(\Y,\ps)$ be an irreducible Poisson scheme of dimension $n$, and let $(\sL,\nabla)$ be an invertible Poisson module.  Suppose that the rank of $\ps$ is generically equal to $2k$, and that $\AD[2k]{\nabla}=\Y$, so that the module is adapted away from the Poisson singular locus $\Z = \Sing{\Y,\ps}$.

If $p > n-2k$, then there is a class $R_p \in \hlgy[2n-2p]{\Z}$ such that
$$
i_*(R_p) = \chern[1]{\sL}^p \cap [\Y] \in \hlgy[2n-2p]{\Y},
$$
where $i : \Z \to \Y$ is the inclusion.  In particular, if
$$
\chern[1]{\sL}^{n-2k+1} \cap [\Y] \ne 0
$$
then $\Z$ has a component of dimension $\ge 2k+1$.
\end{corollary}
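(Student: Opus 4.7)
The plan is to reduce to Bott's vanishing theorem on the smooth, regular, adapted open locus where \autoref{thm:bottvan} applies, and then to transport the resulting cohomological vanishing back to $\Y$ via the Borel-Moore long exact sequence, so that $\chern[1]{\sL}^p \cap [\Y]$ is forced to come from a class supported on $\Z$.

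First, I would work on the open complement $\U = \Y \setminus \Z$ and verify that the hypotheses of \autoref{thm:bottvan} are satisfied there. By definition of $\Z = \Sing{\Y,\ps} = \Y_{sing}\cup \D[2k-2]{\ps}$, the scheme $\U$ is smooth and $\ps|_\U$ has constant rank $2k$, so $(\U,\ps|_\U)$ is a regular Poisson scheme. The assumption $\AD[2k]{\nabla} = \Y$ together with the last sentence of \autoref{prop:adapt-locus} guarantees that $\nabla$ is adapted on $\AD[2k]{\nabla}\setminus \D[2k-2]{\ps}$, which contains $\U$. Hence $(\sL|_\U,\nabla|_\U)$ is a locally-free, adapted Poisson module over the regular Poisson scheme $\U$, and \autoref{thm:bottvan} gives
\[
\chern[1]{\sL|_\U}^{p}=0\in\cohlgy[2p]{\U}\qquad\text{for all } p>n-2k.
\]

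Next, I would apply the Borel-Moore long exact sequence \eqref{eqn:les-bm-hlgy} for the closed embedding $i\colon\Z\hookrightarrow\Y$ with open complement $\U$, specialized to degree $2n-2p$:
\[
\hlgy[2n-2p]{\Z}\stackrel{i_*}{\longrightarrow}\hlgy[2n-2p]{\Y}\stackrel{j^*}{\longrightarrow}\hlgy[2n-2p]{\U}.
\]
By naturality of the cap product, $j^*\bigl(\chern[1]{\sL}^{p}\cap [\Y]\bigr)=\chern[1]{\sL|_\U}^{p}\cap [\U]$, which is zero by the previous step. Exactness therefore yields the desired class $R_p\in\hlgy[2n-2p]{\Z}$ with $i_*R_p=\chern[1]{\sL}^{p}\cap [\Y]$. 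For the dimension statement, Borel-Moore homology of a scheme vanishes above twice its complex dimension, with the top non-trivial group spanned by fundamental classes of top-dimensional components; non-vanishing of $R_{n-2k+1}\in\hlgy[2n-2p]{\Z}$ therefore forces some irreducible component of $\Z$ to have the claimed dimension, and the required lower bound follows by combining this with the Poisson structure $\Z$ inherits as a strong Poisson subscheme.

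The main obstacle is the first step: one must unpack the conditions $\AD[2k]{\nabla}=\Y$ and $\Z\supset\D[2k-2]{\ps}$ carefully to be sure that $\nabla|_\U$ really does satisfy the technical definition of ``adapted'' in the sense of \autoref{def:adapt}, so that \autoref{thm:bottvan} can be invoked on $\U$. Once this geometric input is in place, the cohomological lift is a formal consequence of the Borel-Moore exact sequence and the naturality of cap products, and the dimensional conclusion is read off from property (2) of Borel-Moore homology.
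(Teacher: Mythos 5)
Your argument is the paper's own proof: Bott vanishing (\autoref{thm:bottvan}) for the adapted module on the regular locus $\U=\Y\setminus\Z$, then the Borel--Moore exact sequence \eqref{eqn:les-bm-hlgy} and naturality of the cap product to produce $R_p$, then the vanishing $\hlgy[j]{\Z}=0$ for $j>2\dim\Z$. Your extra care in checking that $(\U,\ps|_\U)$ is regular and that $\nabla|_\U$ is adapted (via $\AD[2k]{\nabla}=\Y$ and the last part of \autoref{prop:adapt-locus}) is exactly what the hypothesis ``so that the module is adapted away from $\Z$'' encodes, so that step is sound.

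The one step you should not gloss over is the final dimension count. With $p=n-2k+1$ the class $R_p$ lies in $\hlgy[2n-2p]{\Z}=\hlgy[4k-2]{\Z}$, so its non-vanishing forces $2\dim\Z\ge 4k-2$, i.e.\ a component of $\Z$ of dimension $\ge 2k-1$, not $2k+1$. Your closing appeal to ``the Poisson structure $\Z$ inherits as a strong Poisson subscheme'' does not upgrade this: $\Z=\Sing{\Y,\ps}$ contains $\Y_{sing}$ and $\D[2k-2]{\ps}$, about whose dimensions nothing further is known at this point. So, as written, your argument (like the paper's own proof) delivers the bound $2k-1$; this is also all that is ever used downstream, where the corollary is invoked to conclude $\dim\Sing{\Y,\ps|_\Y}\ge 2k-1$ in the ample-module lemma. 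In other words, the ``$2k+1$'' in the printed statement is a slip for $2k-1$; state and prove the $2k-1$ bound cleanly rather than suggesting the stronger figure follows from the homological argument.
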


\begin{proof}
Let $\Y^\circ = \Y \setminus \Z$ be the regular locus.  Since $\nabla|_{\Y^\circ}$ is adapted, the previous theorem informs us that $\chern[1]{\sL|_{\Y^\circ}}^p = 0$, and hence
$$
\chern[1]{\sL|_{\Y^\circ}}^p \cap [\Y^\circ] = 0 \in \hlgy[2n-2p]{\Y^\circ}.
$$
Appealing to the long exact sequence \eqref{eqn:les-bm-hlgy}, we find the desired class $R_p$.  If
$$
\chern[1]{\sL}^p\cap[\Y] \ne 0,
$$
it follows that $R_p \ne 0$, and so the statement regarding the dimension of $\Z$ now follows from the vanishing $\hlgy[j]{\Z} = 0$ for $j > 2\dim \Z$.
\end{proof}

\begin{remark}
Under certain non-degeneracy conditions, Baum and Bott relate their homology classes to the local behaviour of the foliation near the singular set of the foliation.  One therefore wonders whether there may be a connection between the residue multiderivations defined in \autoref{sec:defn-res} and the homology classes in the Poisson singular locus.  We hope to address this issue in future work.\qed
 \end{remark}


We are particularly interested in applying \autoref{cor:adapt-residue} to an ample Poisson module, because the positivity of its Chern class implies the nonvanishing of the homology classes $R_p$ described above, placing lower bounds on the dimensions of certain degeneracy loci.

\begin{lemma}
Let $(\X,\ps)$ be a Poisson scheme with $\X$ a smooth projective variety.  Let $(\sL, \nabla)$ be a Poisson module, and suppose that $\sL$ is an ample invertible sheaf.  Let $k>0$, and let $\Y$ be an  irreducible, closed, strong Poisson subscheme of $\AD[2k]{\nabla}$ which is not contained in $\D[2k-2]{\ps}$.  Then the Poisson singular locus $\Sing{\Y,\ps|_\Y}$ is non-empty and has a component of dimension $\ge 2k-1$.
\end{lemma}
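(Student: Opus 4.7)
The plan is to reduce to an application of Corollary \ref{cor:adapt-residue} on the pair $(\Y,\ps|_\Y)$ equipped with the restricted module $(\sL|_\Y,\nabla|_\Y)$. First, because $\Y$ is a strong Poisson subscheme of $\X$, Lemma \ref{prop:strong-module} guarantees that $(\sL|_\Y,\nabla|_\Y)$ is a genuine invertible Poisson module on $\Y$. Since $\Y$ is irreducible, is contained in $\D[2k]{\ps}$ (as $\AD[2k]{\nabla}\subset \D[2k]{\ps}$ by Proposition \ref{prop:adapt-locus}), but is not contained in $\D[2k-2]{\ps}$, the rank of $\ps|_\Y$ is generically exactly $2k$; in particular $n := \dim\Y \ge 2k$.

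Next I would verify the adaptedness hypothesis required by the corollary. Since $\Y\subset \AD[2k]{\nabla}$, Proposition \ref{prop:adapt-locus} shows that $\nabla|_\Y$ is adapted on the open subset $\Y\setminus \D[2k-2]{\ps|_\Y}$ of $\Y$, and therefore on the complement of the Poisson singular locus $\Z := \Sing{\Y,\ps|_\Y} = \Y_{sing}\cup \D[2k-2]{\ps|_\Y}$. Equivalently, the construction of $\ps_\nabla^{k+1}$ is compatible with restriction, so $\AD[2k]{\nabla|_\Y} = \Y\cap \AD[2k]{\nabla} = \Y$, putting us in the setting of Corollary \ref{cor:adapt-residue}.

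Now set $p = n - 2k + 1$, the smallest integer greater than $n - 2k$, and observe that $1 \le p \le n$. Because $\sL$ is ample on the projective variety $\X$, the restriction $\sL|_\Y$ is ample on $\Y$. Replacing $\sL|_\Y$ by a sufficiently high power and invoking Bertini, the class $\chern[1]{\sL|_\Y}^p\cap [\Y]$ is represented by the cycle of $p$ general hyperplane sections of $\Y$, which is nonempty of complex dimension $n-p = 2k-1 \ge 0$; hence this class is nonzero in $\hlgy[4k-2]{\Y}$. Corollary \ref{cor:adapt-residue} then produces a class $R_p \in \hlgy[4k-2]{\Z}$ whose image under $i_*$ equals $\chern[1]{\sL|_\Y}^p \cap [\Y]\ne 0$, so $\hlgy[4k-2]{\Z}\ne 0$, and the vanishing of Borel-Moore homology above twice the dimension forces $\Z$ to contain a component of complex dimension at least $2k-1$.

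The main obstacle I anticipate is the nonvanishing of $\chern[1]{\sL|_\Y}^p\cap[\Y]$, since $\Y$ is allowed to be singular; but this is a standard consequence of the positivity of intersections with powers of an ample class on a projective variety, proven by reducing via a sufficiently high power of $\sL$ to a very ample embedding and cutting by generic hyperplanes. With this input in hand, the remainder of the argument is a direct assembly of Lemma \ref{prop:strong-module}, Proposition \ref{prop:adapt-locus}, and Corollary \ref{cor:adapt-residue}.
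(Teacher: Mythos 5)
Your proposal is correct and follows essentially the same route as the paper: restrict the module to $\Y$, observe $\AD[2k]{\nabla|_\Y}=\Y$ with $\ps|_\Y$ of generic rank $2k$, and apply \autoref{cor:adapt-residue} with $p=\dim\Y-2k+1$, using ampleness of $\sL|_\Y$ to get $\chern[1]{\sL|_\Y}^{p}\cap[\Y]\ne 0$ (the paper simply asserts this nonvanishing, whereas you justify it by hyperplane sections). One small correction: $\Y$ is a strong Poisson subscheme of $\AD[2k]{\nabla}$, not of $\X$, so the restriction of $\nabla$ should be done in two steps --- first to $\AD[2k]{\nabla}$ via \autoref{prop:adapt-locus}, then to $\Y$ via \autoref{prop:strong-module} --- but this does not affect the argument.
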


\begin{proof}
Since $\Y$ is a strong Poisson subscheme of $\AD[2k]{\nabla}$, the module $\nabla$ restricts to a Poisson module on $\Y$, and we have $\AD[2k]{\nabla|_\Y} = \Y$.  Hence the pair $(\Y,\nabla|_\Y)$ satisfies the hypotheses of \autoref{cor:adapt-residue}.

Let $d$ be the dimension of $\Y$.  Then $d \ge 2k$ because $\ps|_\Y$ has generic rank $2k$.  Since $k >0$, we have
$$
0 < d-2k+1 < d.
$$
Since $\sL$ is ample, so is $\sL|_{\Y}$, and hence
$$
\chern[1]{\sL|_\Y}^{d-2k+1} \cap [\Y] \ne 0 \in \hlgy[2(2k-1)]{\Y}
$$
Therefore
$$
\dim \Sing{\Y,\ps|_\Y} \ge 2k-1,
$$
as required.
\end{proof}

\begin{corollary}\label{cor:ample-degen}
Let $(\X,\ps)$ be a Poisson scheme with $\X$ smooth and projective, and suppose that $(\sL,\nabla)$ is an ample invertible Poisson module.  If $\AD[2k]{\nabla}$ is non-empty and has a component of dimension $\ge 2k-1$, then $\D[2k-2]{\ps}$ is also non-empty and has a component of dimension $\ge 2k-1$.
\end{corollary}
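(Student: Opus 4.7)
The plan is to argue by Noetherian descent on dimension, iterating the preceding lemma. I would begin by choosing, by hypothesis, an irreducible component $\Z_1$ of $\AD[2k]{\nabla}$ with $\dim \Z_1 \ge 2k-1$, and assume for contradiction that $\D[2k-2]{\ps}$ has no irreducible component of dimension $\ge 2k-1$. In particular, $\Z_1 \not\subset \D[2k-2]{\ps}$, since otherwise any irreducible component of $\D[2k-2]{\ps}$ containing $\Z_1$ would already have dimension at least $2k-1$, contradicting the standing assumption.

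Next I would apply the preceding lemma to $\Z_1$, which is a strong Poisson subscheme of $\AD[2k]{\nabla}$ by \autoref{lem:strg-subvar}. This produces a component of
$$
\Sing{\Z_1,\ps|_{\Z_1}} = (\Z_1)_{sing} \cup \D[2k-2]{\ps|_{\Z_1}}
$$
of dimension $\ge 2k-1$. The standing assumption rules out this component lying in $\D[2k-2]{\ps|_{\Z_1}} \subset \D[2k-2]{\ps}$, so it must lie in $(\Z_1)_{sing}$. Taking $\Z_2$ to be that component, I would have $\dim \Z_2 \ge 2k-1$ while $\dim \Z_2 \le \dim (\Z_1)_{sing} < \dim \Z_1$. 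Iterating will produce a strictly descending chain $\dim \Z_1 > \dim \Z_2 > \cdots$ of integers bounded below by $2k-1$, which must terminate in finitely many steps, yielding the sought contradiction.

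The hard part will be verifying that each $\Z_i$ is again a strong Poisson subscheme of the ambient $\AD[2k]{\nabla}$ (and not merely of its immediate predecessor $\Z_{i-1}$), so that the lemma can be reapplied at every stage. I would justify this by observing that any Poisson vector field $V$ on $\AD[2k]{\nabla}$ preserves the ideal of the irreducible component $\Z_{i-1}$ by \autoref{lem:strg-subvar}, so $V$ restricts to a vector field on $\Z_{i-1}$ that remains Poisson with respect to $\ps|_{\Z_{i-1}}$; applying \autoref{lem:strg-subvar} once more, this restriction then preserves the ideal of $(\Z_{i-1})_{sing}$ and of each of its irreducible components, including $\Z_i$. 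Everything else is immediate from the preceding lemma, and the non-emptiness of $\D[2k-2]{\ps}$ follows a fortiori once a component of dimension $\ge 2k-1$ has been exhibited.
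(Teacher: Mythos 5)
Your proposal is correct and follows essentially the same route as the paper: both iterate the preceding lemma along a descending chain of reduced, irreducible strong Poisson subschemes of $\AD[2k]{\nabla}$ obtained as components of successive Poisson singular loci, using \autoref{lem:strg-subvar} and transitivity of strongness to keep the lemma applicable, and terminating by a Noetherian/dimension count. The only difference is bookkeeping: you run it as a contradiction (the big component must always fall into the singular locus, forcing dimensions to drop forever), whereas the paper selects maximal-dimensional components and stops at the last stage where the dimension is $\ge 2k-1$, concluding there that the rank of $\ps$ has dropped to $\le 2k-2$.
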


\begin{proof}
Select an irreducible component $\Y_0'$ of $\AD[2k]{\nabla}$ of maximal dimension, and let $\Y_0 = (\Y_0')_{red}$ be the reduced subscheme.  For $j > 0$, select by induction an irreducible component $\Y_j'$ of $\Sing{\Y_{j-1},\ps|_{\Y_{j-1}}}$ of maximal dimension, and let $\Y_j = (\Y_j')_{red}$.  We therefore obtain a decreasing chain
$$
\Y_0 \supset \Y_1 \supset \Y_2 \supset \cdots
$$
of strong Poisson subschemes of $\AD[2k]{\nabla}$, with each inclusion strict.

Since $\X$ is Noetherian, the chain must eventually terminate, and so there is a maximal integer $J$ such that $\dim \Y_{J} \ge 2k-1$.  We claim that the rank of $\ps|_{\Y_{J}}$ is $\le 2k-2$.  Indeed, if it were not, then $\Y_{J}$ would satisfy the conditions of the previous lemma, and so we would find that $\Y_{J+1}$ is non-empty of dimension $\ge 2k-1$, contradicting the maximality of $J$.
\end{proof}

\section{Degeneracy loci and the canonical module}
\label{sec:degen}

\subsection{The modular residues}\label{sec:mod-res}

In this section we will give a simple formula for the residues of the canonical module, and relate it to the geometry of the degeneracy loci.  Throughout this section, $(\X,\ps)$ will be a smooth connected Poisson scheme.

For a multiderivation $U \in \der{\X}$, we denote by $\iota_U : \forms{\X} \to \forms{\X}$ the operator of interior contraction by $U$.  Then the Schouten bracket $[U,V]$ of sections $U, V\in \der{\X}$ is characterized by the identity
\begin{align}
\iota_{[U,V]} = [[\iota_{U},d],\iota_{V}],\label{eqn:schout-id}
\end{align}
where the brackets on the right-hand side are graded commutators.

\begin{lemma}\label{lem:ps-comm}
For a Poisson structure $\ps \in \der[2]{\X}$, we have the commutator
$$
[\ips[k],d] = k \ips[k-1] [\ips,d]
$$
of operators on $\forms{\X}$.
\end{lemma}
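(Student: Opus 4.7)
The plan is to prove the identity by induction on $k$, using equation~\eqref{eqn:schout-id} to convert vanishing Schouten brackets into commutation relations among contraction operators.

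The base case $k=1$ is trivial since $\iota_{\ps^0} = \mathrm{id}$. For the inductive step, I would first observe that because $\ps$ is a bivector (hence of even degree), the contraction operator $\iota_\ps$ commutes without signs and we have the factorization
$$
\iota_{\ps^{k+1}} = \iota_{\ps}\,\iota_{\ps^{k}}
$$
as operators on $\forms{\X}$. Applying the graded Leibniz rule for the graded commutator with $d$ (noting that $\iota_\ps$ and $\iota_{\ps^k}$ both have even degree, so the sign in the Leibniz rule is $+1$), this gives
$$
[\iota_{\ps^{k+1}},d] \;=\; \iota_\ps\,[\iota_{\ps^k},d] \;+\; [\iota_\ps,d]\,\iota_{\ps^k}.
$$
The first term is handled by the inductive hypothesis: it equals $k\,\iota_\ps\iota_{\ps^{k-1}}[\iota_\ps,d] = k\,\iota_{\ps^k}[\iota_\ps,d]$. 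To combine this with the second term into $(k+1)\,\iota_{\ps^k}[\iota_\ps,d]$, the key step is to show that $[\iota_\ps,d]$ commutes with $\iota_{\ps^k}$.

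This commutation is exactly where the integrability $[\ps,\ps]=0$ enters. By the defining identity~\eqref{eqn:schout-id} applied to $U = \ps$ and $V = \ps^k$, the graded commutator $[[\iota_\ps,d],\iota_{\ps^k}]$ equals $\iota_{[\ps,\ps^k]}$. A straightforward induction using the graded Leibniz rule for the Schouten bracket shows that
$$
[\ps,\ps^k] \;=\; k\,[\ps,\ps]\wedge \ps^{k-1},
$$
which vanishes since $\ps$ is Poisson. Hence $[\iota_\ps,d]$ graded-commutes with $\iota_{\ps^k}$; and since $[\iota_\ps,d]$ has odd degree $-1$ while $\iota_{\ps^k}$ has even degree, graded commutativity here coincides with ordinary commutativity. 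Putting the two terms together yields the claimed identity.

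The only subtlety I anticipate is bookkeeping of signs in the graded Leibniz rules for both the commutator with $d$ and the Schouten bracket; but since every operator in sight ($\iota_\ps$, $\iota_{\ps^k}$) has even degree, every sign that could appear is $+1$, so the manipulation is essentially mechanical once the induction hypothesis is in place.
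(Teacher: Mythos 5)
Your proof is correct and follows essentially the same route as the paper: induction on $k$, the graded Leibniz rule for the commutator with $d$, and the identity \eqref{eqn:schout-id} together with $[\ps,\ps]=0$ to obtain the needed commutation. The only (harmless) difference is that you invoke \eqref{eqn:schout-id} with $V=\ps^k$ and the auxiliary computation $[\ps,\ps^k]=k\,[\ps,\ps]\wedge\ps^{k-1}$, whereas the paper applies it once with $U=V=\ps$ to get $[[\iota_{\ps},d],\iota_{\ps}]=0$ and then simply moves $[\iota_{\ps},d]$ past powers of $\iota_{\ps}$.
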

\begin{proof}
Let $U = [\ps,d]$.  Since $[\ps,\ps] = 0$, the formula \eqref{eqn:schout-id} gives the commutator
$$
[U,\ips] = 0.
$$
For $k=1$ the formula we seek is trivially satisfied, so suppose by induction that it is correct for some $k \ge 1$.  The derivation property of the commutator gives
$$
[\ips[k+1],d] = [\ips[k],d]\ips + \ips[k][\ips,d]
$$
which, by the inductive hypothesis, gives
\begin{align*}
[\ips[k+1],d] &= k\ips[k-1]U\ips +  \ips[k]U \\
&= (k+1)\ips[k]U,
\end{align*}
since $U$ and $\ips$ commute.
\end{proof}

Recall from \autoref{ex:can-mod} that the Poisson module structure on $\can_\X$ is defined by the formula
$$
\nabla_\alpha \mu =  -\alpha \wedge d \ips \mu
$$
for $\mu\in\can_\X$ and $\alpha \in \forms[1]{\X}$.  For $\mu$ a local trivialization with corresponding connection vector field $Z$, we have
\begin{align*}
\nabla_\alpha \mu &= (\iota_Z \alpha)\mu \\
&= \iota_Z(\alpha \wedge \mu) + \alpha \wedge \iota_Z \mu \\
&= \alpha \wedge \iota_Z \mu,
\end{align*}
because degree considerations force $\alpha \wedge \mu = 0$.  Therefore, $Z$ is uniquely determined by the formula
\begin{align*}
\iota_Z \mu &= -d\ips\mu  \\
&= [\ips,d]\mu 
\end{align*}
since $d\mu = 0$.  Using this formula we can compute the residues:
\begin{theorem}\label{thm:mod-res-formula}
For $k \ge 0$, denote by
$$
D\ps^{k+1} = j^1(\ps^{k+1}) | _{\D[2k]{\ps}} \in \forms[1]{\X} \otimes \der[2k+1]{\X}|_{\D[2k]{\ps}}
$$
the derivative of $\ps^{k+1}$ along its zero set, and denote by
$$
\tr : \forms[1]{\X} \otimes \der[2k+2]{\X} \to \der[2k+1]{\X}
$$
the contraction.  Then the $k^{th}$ modular residue is given by the formula
$$
i_*\mres{k}{\ps} = \frac{-1}{k+1}\tr(D\ps^{k+1}) \in \der[2k+1]{\X}|_{\D[2k]{\ps}},
$$
where $i : \D[2k]{\ps} \to \X$ is the inclusion.
\end{theorem}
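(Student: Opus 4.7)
The plan is to establish the formula locally in a trivialization $\mu$ of $\can_\X$ and then pass from an identity of differential forms to one of multiderivations via the isomorphism $V \mapsto \iota_V\mu \otimes \mu^{-1}$ from $\der[p]{\X}$ to $\forms[n-p]{\X}\otimes\can_\X^{-1}$, where $n = \dim\X$. Let $Z$ denote the modular vector field associated with $\mu$, so that $\iota_Z\mu = [\ips,d]\mu = -d\iota_\ps\mu$ by the calculation preceding the theorem. By \autoref{rem:higgs-res}, we have $i_*\mres{k}{\ps} = (Z \wedge \ps^k)|_{\D[2k]{\ps}}$, so it suffices to prove
$$
(Z \wedge \ps^k)|_{\D[2k]{\ps}} = \frac{-1}{k+1}\tr(D\ps^{k+1}).
$$

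First, I would apply \autoref{lem:ps-comm} to the closed form $\mu$. Since $d\mu = 0$, the commutator identity $[\ips[k+1], d] = (k+1)\ips[k] [\ips,d]$ yields
$$
-d\iota_{\ps^{k+1}}\mu = (k+1)\ips[k]\iota_Z\mu = (k+1)\iota_{Z \wedge \ps^k}\mu,
$$
where the last equality uses the convention $\iota_{U \wedge V} = \iota_V \iota_U$. Rearranging gives the form-level identity
$$
\iota_{Z \wedge \ps^k}\mu = \frac{-1}{k+1}\,d\iota_{\ps^{k+1}}\mu.
$$

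Next, I would translate this into a statement about multiderivations. Define a $\mu$-dependent divergence $\mathrm{div}_\mu : \der[p]{\X} \to \der[p-1]{\X}$ by $\iota_{\mathrm{div}_\mu V}\mu = d\iota_V\mu$. Although $\mathrm{div}_\mu$ depends on the trivialization, its restriction to the zero locus of a section $V$ is intrinsic: a direct coordinate calculation with $\mu = dx^1 \wedge \cdots \wedge dx^n$ shows that, on $\{V = 0\}$,
$$
\mathrm{div}_\mu V = -\tr(DV),
$$
where $\tr : \forms[1]{\X} \otimes \der[p]{\X} \to \der[p-1]{\X}$ is the natural contraction and $DV = j^1(V)|_{\{V=0\}}$ is the intrinsic derivative. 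Applying this identification to the previous display, with $V = \ps^{k+1}$ on $\D[2k]{\ps}$, produces
$$
(Z \wedge \ps^k)|_{\D[2k]{\ps}} = \frac{-1}{k+1}\tr(D\ps^{k+1}),
$$
as required. Although the derivation uses a local trivialization, both sides are intrinsically defined on $\D[2k]{\ps}$, so the identity automatically holds globally.

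The main obstacle is bookkeeping of signs across three independent steps: the graded commutator in \autoref{lem:ps-comm}, the convention $\iota_{U \wedge V} = \iota_V \iota_U$ for iterated interior contraction, and the coordinate identification of $\mathrm{div}_\mu$ with $-\tr(D{-})$ on a zero locus. The precise coefficient $-1/(k+1)$ in the final formula is produced by carefully combining these three signs, and a slip in any one of them would shift the overall constant. All other ingredients---notably $d\mu = 0$, the description of the residue as $Z \wedge \ps^k$ from \autoref{rem:higgs-res}, and the intrinsic character of the jet of $\ps^{k+1}$ on its zero locus---follow directly from material already developed earlier in the paper.
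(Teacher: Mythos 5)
Your overall route is the one the paper itself takes: reduce via \autoref{rem:higgs-res} to the identity $(Z\wedge\ps^k)|_{\D[2k]{\ps}} = \tfrac{-1}{k+1}\tr(D\ps^{k+1})$, use \autoref{lem:ps-comm} together with $d\mu=0$ to obtain $\iota_{Z\wedge\ps^k}\mu = \tfrac{-1}{k+1}\,d\,\iota_{\ps^{k+1}}\mu$, and then convert this form-level identity into a multiderivation identity on the zero locus, using that the one-jet of $\ps^{k+1}$ is intrinsic there. The paper packages the last step as ``symbol of $d$ plus the Hodge isomorphism $V\mapsto\iota_V\mu$''; your divergence operator $\mathrm{div}_\mu$ is the same device in different clothing, so there is no genuinely new route here.

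The problem is that your final step is internally inconsistent, and it is exactly the sign bookkeeping you flagged as the main risk. Your two displays, $Z\wedge\ps^k = \tfrac{-1}{k+1}\mathrm{div}_\mu\ps^{k+1}$ and $\mathrm{div}_\mu V|_{\{V=0\}} = -\tr(DV)$, combine to give $(Z\wedge\ps^k)|_{\D[2k]{\ps}} = +\tfrac{1}{k+1}\tr(D\ps^{k+1})$, not the asserted $-\tfrac{1}{k+1}\tr(D\ps^{k+1})$. The culprit is the claimed coordinate identity: on the zero locus one has $d\,\iota_V\mu = \sum_i dx^i\wedge\iota_{\partial_i V}\mu$, and comparing this with $\iota_{\tr(DV)}\mu$ produces a sign that is \emph{not} uniformly $-1$ under any single consistent convention. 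With your conventions ($\iota_{U\wedge V}=\iota_V\iota_U$, one-form contracted into the first slot) the sign is $(-1)^{p-1}$ for $V$ of degree $p$; with the convention in which $\tr$ is the contraction for which $V\mapsto\iota_V\mu$ carries $\tr(\alpha\otimes -)$ to $\alpha\wedge(-)$ --- which is precisely what the paper means by ``the Hodge isomorphism intertwines interior contraction and exterior product,'' and is the convention under which the theorem's formula is stated --- the identity is $\mathrm{div}_\mu V|_{\{V=0\}} = +\tr(DV)$. With that corrected sign your chain closes and yields the stated coefficient $-1/(k+1)$. If you instead keep the first-slot convention, your auxiliary identity does hold with a $-1$ for the even degree $p=2k+2$ relevant here, but then the $\tr$ in the theorem has changed meaning and your conclusion no longer matches its statement. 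So: fix the convention for $\tr$ once and for all, re-derive the divergence-versus-trace identity in that convention (noting its sign is degree-dependent in the other one), and the proof is complete and agrees with the paper's.
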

\begin{proof}
Let $\Y = \D[2k]{\ps}$.  The question is local, so we may pick a trivialization $\mu \in \can_\X$.  Let $Z$ be the connection vector field with respect to this trivialization, so that $\iota_Z \mu = [\ips,d]\mu$.  

By \autoref{rem:higgs-res} it suffices to show that
$$
(Z \wedge \ps^k) |_\Y = \frac{-1}{k+1}\tr(D\ps^{k+1}).
$$ 
We compute
\begin{align*}
\iota_{Z\wedge\ps^k} \mu &= \ips[k]\iota_Z \mu \\
&= \ips[k][\ips,d]\mu \\
&= \frac{1}{k+1} [\ips[k+1],d] \mu
\end{align*}
by \autoref{lem:ps-comm}.  Therefore
$$
\iota_{Z \wedge \ps^k}\mu = \frac{-1}{k+1} d\ips[k+1]\mu,
$$
since $\mu$ is closed.  But $\ips[k+1]\mu$ vanishes on $\D[2k]{\ps}$, and hence its one-jet restricts to the derivative
$$
D(\ips[k+1]\mu) \in \forms[1]{\X} \otimes \forms[n-2k-2]{\X} |_\Y,
$$
where $n = \dim \X$.  Since the symbol of the exterior derivative is the exterior product $\epsilon$, we have that
$$
\iota_{Z \wedge \ps^k}\mu|_\Y = \frac{-1}{k+1}\epsilon(D(\ips[k+1]\mu)) \in \forms[n-2k-1]{\X}|_\Y.
$$
Now, the Hodge isomorphism $\der{\X} \to \forms[n-\bullet]{\X}$ defined by $\mu$ intertwines interior contraction and exterior product, so this formula shows that
$$
(Z \wedge \ps^k)|_\Y = \frac{-1}{k+1} \tr(D\ps^{k+1}),
$$
as desired.
\end{proof}

\begin{remark}
Suppose that $\X$ is smooth of dimension $2n$, and $\ps$ is generically symplectic.  Suppose further that the degeneracy divisor $\Y = \D[2n-2]{\ps}$ is also smooth.  Inverting $\ps^n$ gives a meromorphic volume form $\mu = (\ps^n)^{-1}$ with poles on $\Y$, which has a Poincar\'e residue $\res{}{\mu} \in \cohlgy[0]{\Y,\omega_\Y}$.  We have the identity
$$
\mres{n-1}{\ps} = -n\res{}{\mu}^{-1} \in \cohlgy[0]{\Y,\can_\Y^{-1}},
$$
so that the modular residues of top degree are residues in the usual sense.\qed
\end{remark}

\begin{example}
For the Poisson structure
$$
\ps = (x\cvf{x}+y\cvf{y})\wedge \cvf{z}
$$
on $\Aff{3} = \Spec(\complex[x,y,z])$ considered in \autoref{ex:euler-planes}, the zero locus is the $z$-axis $\Z \subset \Aff{3}$.  To compute the modular residue, we calculate the derivative
$$
D\ps =  \rbrac{dx \otimes (\cvf{x}\wedge\cvf{z}) + dy \otimes (\cvf{y}\wedge\cvf{z}) } |_\Z,
$$
and contract the one-forms into the bivectors to find
$$
\mres{0}{\ps} = - \tr(D\ps) = -2 \cvf{z}|_\Z,
$$
giving a trivialization of $\der[1]{\Z}$.\qed
\end{example}

While the degeneracy loci of a Poisson module are always Poisson subschemes, the degeneracy loci of the canonical module are, in fact, strong Poisson subschemes:
\begin{proposition}
Let $(\X,\ps)$ be a smooth Poisson scheme.  Then the degeneracy locus $\AD[2k][mod]{\ps}$ of the canonical module is a strong Poisson subscheme.
\end{proposition}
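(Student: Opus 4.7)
The plan is to work locally on $\X$, trivializing $\can_\X$ by a volume form $\mu$ so that we obtain an explicit modular vector field $Z \in \tshf{\X}$, characterized by $\iota_Z \mu = -d\ips\mu$. By \autoref{prop:adapt-locus} (applied to the canonical module), the ideal sheaf of $\AD[2k][mod]{\ps}$ is then locally generated by the ideal $\sI{\D[2k]{\ps}}$ of the degeneracy locus, together with the functions $\gamma(Z \wedge \ps^k)$ for $\gamma \in \forms[2k+1]{\X}$. I would verify that both families of generators are preserved by $\lie{W}$ for every Poisson vector field $W$.

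For the first family there is nothing to prove: \autoref{prop:degen-strong} already guarantees that $\sI{\D[2k]{\ps}}$ is preserved by all Poisson vector fields. The content of the proposition therefore concerns the generators of the second family. The Leibniz rule together with $\lie{W}\ps = 0$ gives $\lie{W}(Z \wedge \ps^k) = [W,Z]\wedge \ps^k$, so the task reduces to showing that the pairing of $[W,Z]\wedge \ps^k$ with any $(2k+1)$-form lies in the ideal of $\AD[2k][mod]{\ps}$.

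The crux of the argument, and where I expect the main obstacle to lie, is the identity $[W,Z] \in \sHam{\ps}$ for every $W \in \sPois{\ps}$. My strategy is to begin with the commutator formula $\iota_{[W,Z]}\mu = \lie{W}\iota_Z\mu - \iota_Z\lie{W}\mu$, write $\lie{W}\mu = h\mu$ for the divergence function $h$, and use the Poisson condition in the form $\lie{W}\ips = \ips\lie{W}$ to rewrite the right-hand side as $-dh \wedge \ips\mu$. The last step is the contraction identity $dh \wedge \ips\mu = \iota_{\pss(dh)}\mu$, valid for any bivector and any top form; combined with the previous step this yields $[W,Z] = -\pss(dh) \in \sHam{\ps}$. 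This is essentially the classical ``invariance of the modular class'' phenomenon, and the algebraic manipulations are routine but require care with signs.

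Once $[W,Z]$ is Hamiltonian, the conclusion follows from the identity $\pss(df) \wedge \ps^k = \tfrac{1}{k+1} \iota_{df}\ps^{k+1}$ already used in the proof of \autoref{prop:adapt-locus}: pairing $[W,Z]\wedge \ps^k$ against any $\gamma \in \forms[2k+1]{\X}$ produces a scalar multiple of $\ps^{k+1}(dh \wedge \gamma)$, which visibly lies in $\sI{\D[2k]{\ps}}$ and hence in the ideal of $\AD[2k][mod]{\ps}$. Thus $\lie{W}$ preserves the full ideal, showing that $\AD[2k][mod]{\ps}$ is a strong Poisson subscheme.
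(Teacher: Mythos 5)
Your proof is correct and is essentially the paper's own argument: both work in a local trivialization $\mu$ of $\can_\X$ with modular vector field $Z$, use the divergence $\lie{W}\mu = h\mu$ of the Poisson vector field $W$, and absorb the resulting term into $\ps^{k+1}$ so that it lands in $\sI{{\D[2k]{\ps}}}$, which is contained in the ideal of $\AD[2k][mod]{\ps}$. The only difference is organizational: you isolate the intermediate statement $[W,Z]=\pm\pss(dh)\in\sHam{\ps}$ and then apply $\pss(dh)\wedge\ps^k=\tfrac{1}{k+1}\iota_{dh}\ps^{k+1}$, whereas the paper runs the same computation on the form side, expanding $\lie{W}\bigl(\iota_{Z\wedge\ps^k}\mu\bigr)$ via $\iota_{Z\wedge\ps^k}\mu=-\tfrac{1}{k+1}\,d\,\ips[k+1]\mu$; your contraction identity $dh\wedge\ips\mu=\pm\iota_{\pss(dh)}\mu$ is correct up to the sign convention for $\ips$, and either sign yields the Hamiltonian conclusion you need.
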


\begin{proof}
Again, the question is local, so we may pick a trivialization $\phi \in \can_\X$ with connection vector field $Z$.  We are interested in the simultaneous vanishing set of the tensors $\ps^{k+1}$ and $Z \wedge \ps^k$.  If $U \in \der[1]{\X}$ is a Poisson vector field, we compute
\begin{align*}
-(k+1) \lie{U}(\iota_{Z\wedge \ps^k} \mu) &= \lie{U}(d\ips[k+1]\mu) \\
&=  d \lie{U}(\ips[k+1]\mu) \\
&=  d(\ips[k+1] \lie{U}\mu)
\end{align*}
since $\lie{U}\ps = 0$.  Now $\lie{U}\mu = f \mu$ for some $f \in \strc{X}$, and hence
\begin{align*}
-(k+1)\lie{U}(\iota_{Z\wedge \ps^k} \mu) &=  d(\ips[k+1]f \mu) \\
&=  df \wedge \ips[k+1]\mu + f d \ips[k+1]\mu \\
&= df  \wedge \ips[k+1]\mu -(k+1) f\iota_{Z \wedge \ps^k} \mu
\end{align*}
which vanishes on $\AD[2k][mod]{\ps}$.  Hence the ideal defining the $2k^{th}$ degeneracy locus is preserved by $U$ as required.
\end{proof}

\subsection{Singularities of degeneracy loci}

We are now in a position to study the structure of the degeneracy loci in detail.  We begin with the following observation:
\begin{lemma}
Let $(\X,\ps)$ be a smooth Poisson scheme, and let $k \ge 0$.  Then the one-jet $j^1\ps^{k+1}$ vanishes on $\D[2k-2]{\ps}$.
\end{lemma}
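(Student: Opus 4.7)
The plan is to derive the claim from a single Leibniz-type identity. First I would verify that for every local vector field $U \in \tshf{X}$,
$$
\lie{U}\ps^{k+1} = (k+1)\,\ps^k \wedge \lie{U}\ps,
$$
by induction on $k$. The Lie derivative $\lie{U}$ is a graded derivation of the wedge product on $\der{\X}$, and since $\ps$ is of even degree it commutes with $\lie{U}\ps$ under wedge, so the two contributions at the inductive step combine into a single factor of $\ps^k$ on the right. The crucial feature of this identity is that the right-hand side has $\ps^k$ as an explicit factor.

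By definition, the ideal sheaf $\sI{\D[2k-2]{\ps}}$ is the image of the contraction $\ps^k : \forms[2k]{\X} \to \strc{X}$, so in any local frame the components of $\ps^k$ generate this ideal. The identity above therefore shows that the components of $\lie{U}\ps^{k+1}$ lie in $\sI{\D[2k-2]{\ps}}$ for every vector field $U$, and taking $U$ to be the coordinate vector fields $\partial_l$ shows that every partial derivative of every component of $\ps^{k+1}$ lies in $\sI{\D[2k-2]{\ps}}$. Since the components of $\ps^{k+1} = \ps \wedge \ps^k$ themselves also lie in this ideal, both the value and the derivative parts of the one-jet of $\ps^{k+1}$ vanish on $\D[2k-2]{\ps}$, which is exactly the claim.

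Intrinsically, one can avoid coordinates by observing that at a point $x \in \D[2k-2]{\ps}$, the derivative of $\ps^{k+1}$ in a tangent direction $v \in \tb[x]{\X}$ is well-defined (because $\ps^{k+1}(x) = 0$) and may be computed as $(\lie{U}\ps^{k+1})(x)$ for any vector field $U$ with $U(x) = v$; the identity evaluates this to $(k+1)\,\ps^k(x) \wedge \lie{U}\ps(x)$, which vanishes because $\ps^k(x) = 0$. No substantive obstacle arises: once the Leibniz identity is in place, the rest is bookkeeping, and the identity itself is a routine consequence of the derivation property of $\lie{U}$.
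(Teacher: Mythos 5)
Your proof is correct and is essentially the paper's argument: the whole content is the Leibniz identity exhibiting $\ps^k$ as an explicit factor of the derivative of $\ps^{k+1}$, so that both the value and the derivative part of the one-jet lie in the ideal $\sI{\D[2k-2]{\ps}}$, which is generated by the components of $\ps^k$. The only cosmetic difference is that you differentiate with Lie derivatives along (commuting, coordinate) vector fields, whereas the paper splits the jet using a locally chosen connection on $\der[1]{\X}$ and applies the same Leibniz rule there.
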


\begin{proof}
The question is local, so we may choose a connection $\nabla: \der[1]{\X} \to \forms[1]{\X} \otimes \der[1]{\X}$. which induces connections on $\der[2j]{\X}$ for all $j$.  Using the connection, we may write
$$
j^1 \ps^{k+1} = (\ps^{k+1},(k+1)\nabla\ps \wedge \ps^k) \in \der[2k+2]{\X} \oplus \rbrac{\forms[1]{\X}\otimes\der[2k-2]{\X}}
$$
by the Leibniz rule.  Hence $j^1\ps^{k+1}$ vanishes when $\ps^k$ does.
\end{proof}

\begin{corollary}\label{thm:rank-sing}
Suppose that $(\X,\ps)$ is a smooth connected Poisson scheme and $\D[2k]{\ps} \ne \X$.  Then
$$
\D[2k-2]{\ps} \subset \D[2k]{\ps}_{sing},
$$
so that the Poisson structure has rank $2k$ on the smooth locus of $\D[2k]{\ps}$.  In particular, every reduced component of $\D[2k]{\ps}$ has dimension $\ge 2k$.
\end{corollary}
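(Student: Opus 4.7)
My plan is to derive both assertions directly from the preceding lemma, which tells us that the one-jet $j^1\ps^{k+1}$ vanishes on $\D[2k-2]{\ps}$.

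First I would establish the inclusion $\D[2k-2]{\ps}\subset\D[2k]{\ps}_{sing}$. Since $\X$ is smooth, the sheaf $\der[2k+2]{\X}$ is locally free; trivializing it on a small open set, I would write $\ps^{k+1}=(f_\alpha)$ as a tuple of regular functions, so that the ideal of $\D[2k]{\ps}$ is generated locally by the $f_\alpha$. The preceding lemma ensures that at every point $x\in\D[2k-2]{\ps}$, both the values $f_\alpha(x)$ and the differentials $df_\alpha(x)$ vanish, whence the Zariski tangent space $\tb[x]{\D[2k]{\ps}}$ is cut out by no conditions and equals the ambient $\tb[x]{\X}$. Since $\X$ is smooth and connected, hence irreducible, the hypothesis $\D[2k]{\ps}\ne\X$ forces the local dimension of $\D[2k]{\ps}$ at $x$ to be strictly less than $\dim\X=\dim\tb[x]{\X}$. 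The Zariski tangent dimension therefore strictly exceeds the local dimension, so $x$ must be a singular point of the scheme $\D[2k]{\ps}$.

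The rank statement is then immediate: a smooth point $x$ of $\D[2k]{\ps}$ cannot lie in $\D[2k-2]{\ps}$ by contraposition, so $\rank{\ps_x}>2k-2$; combined with $\rank{\ps_x}\le 2k$ and evenness of the rank, this forces $\rank{\ps_x}=2k$.

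Finally, to bound the dimension of a reduced irreducible component $C$ of $\D[2k]{\ps}$, I would observe that by \autoref{prop:degen-strong} together with \autoref{lem:strg-subvar}, the reduced subscheme and each irreducible component of a strong Poisson subscheme are again strong Poisson subschemes, so $C$ inherits an induced Poisson structure. Picking a point $x\in C$ that is a smooth point of the scheme $\D[2k]{\ps}$, the previous paragraph gives $\rank{\ps_x}=2k$; since $C$ is a Poisson subscheme, its ideal is preserved by Hamiltonian vector fields, so $\sImg{\pss_x}\subseteq\tb[x]{C}$, and therefore $\dim C=\dim_\complex\tb[x]{C}\ge 2k$. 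The delicate point---which I expect to be the main obstacle---is guaranteeing the existence of such an $x$, i.e.\ that every reduced component of $\D[2k]{\ps}$ in fact meets the smooth locus of the scheme $\D[2k]{\ps}$ rather than lying entirely inside its singular locus. This amounts to generic reducedness of $\D[2k]{\ps}$ along each component, which is standard in this setting but should be addressed carefully.
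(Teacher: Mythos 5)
Your argument is correct and follows essentially the same route as the paper: the vanishing of $j^1\ps^{k+1}$ on $\D[2k-2]{\ps}$ forces the Zariski tangent space of $\D[2k]{\ps}$ to be all of $\tb[x]{\X}$ at such points while the local dimension is smaller (since $\D[2k]{\ps}\ne\X$ and $\X$ is irreducible), giving singularity, hence rank exactly $2k$ at smooth points, and then the Poisson-subscheme property of each reduced component gives the dimension bound. The ``delicate point'' you flag is handled exactly as the paper implicitly does: a reduced component is generically smooth (characteristic zero), and away from the other and embedded components the scheme $\D[2k]{\ps}$ coincides with it, so such a component always meets the smooth locus of $\D[2k]{\ps}$.
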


\begin{proof}
If $x \in \D[2k]{\ps}$ is a smooth point, then $j^1(\ps^{k+1})_x \ne 0$.  By the lemma, we have $\ps^k_x \ne 0$ and hence the rank of $\ps$ at $x$ is $\ge 2k$.  Therefore, $\D[2k-2]{\ps}$ is contained in the singular locus.  If $\Y$ is a component of $\D[2k]{\ps}$ which is reduced, then it has a smooth point, and so the rank of $\ps|_\Y$ is generically $2k$.  But $\Y$ is a Poisson subscheme, and hence it must have dimension $\ge 2k$.
\end{proof}

\begin{remark}
In light of this result, it is natural to wonder if every component of $\D[2k]{\ps}$ has dimension $\ge 2k$ whenever it is nonempty.  However, it is not the case.  Recall that if $\g$ is a Lie algebra, then the Lie bracket induces the Kirillov-Kostant-Souriau Poisson structure $\ps$ on $\g^\vee$, whose coefficients are linear polynomials on $\g^\vee$.  The symplectic leaves of $\ps$ are exactly the coadjoint orbits.  But the minimal dimension of a non-zero coadjoint orbit is rarely equal to two~\cite{Wolf1978}.  For example, every non-zero coadjoint orbit of $\mathfrak{sl}(3,\complex)$ has dimension at least four~\cite{Graham2005a}.  Hence, for many Lie algebras we have $\D[2]{\ps} = \{0\}$ as sets.  However, the previous result shows that the scheme structure cannot be reduced.  But this is clear: the ideal of $\D[2]{\ps}$ it is generated by the coefficients of $\ps\wedge \ps$, which are homogeneous quadratic polynomials on $\g^\vee$, and hence they do not generate the maximal ideal of the origin.\qed
\end{remark}

In the case of a generically symplectic Poisson structure, we can use the canonical module to study the structure of the singular locus of the degeneracy divisor:
\begin{theorem}\label{thm:adapt-sing}
Let $(\X,\ps)$ be a smooth connected Poisson scheme of dimension $2n$, and suppose that $\ps$ is generically symplectic.  Let $\Y = \D[2n-2]{\ps}$ be the anti-canonical divisor on which $\ps$ degenerates.  Then 
$\Y_{sing} = \AD[2n-2][mod]{\ps}$
 as schemes.  In particular, every component of $\Y_{sing}$ has dimension $\ge 2n-3$ if it is non-empty. Moreover, if $\dim \Y_{sing} = 2n-3$, then $\Y_{sing}$ is Gorenstein with dualizing sheaf $\can_\X^{-1}|_{\Y_{sing}}$ and its fundamental class is
$$
[\Y_{sing}] = c_1c_2-c_3 \in \chow[3]{\X}.
$$
\end{theorem}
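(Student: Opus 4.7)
The plan is to reduce the theorem to \autoref{cor:adapt-codim3} by identifying the singular locus $\Y_{sing}$ with the degeneracy locus $\AD[2n-2][mod]{\ps}$ of the canonical Poisson module, and then applying the corollary to $\sL=\can_\X$ with its natural Poisson module structure.

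The first step is the scheme-theoretic identity $\Y_{sing}=\AD[2n-2][mod]{\ps}$. This is a local question, so I would choose a local trivialization $\mu\in\can_\X$ and use it to identify $\ps^n\in\can_\X^{-1}$ with a scalar function $f\in\strc{X}$. Since $\ps$ is generically symplectic, $\Y=\{f=0\}$, and by the Jacobian criterion $\Y_{sing}$ is scheme-theoretically defined by the ideal generated by $f$ together with the components of $df$. On the other hand, by \autoref{prop:adapt-locus} the subscheme $\AD[2n-2][mod]{\ps}\subset\Y$ is cut out by $\mres{n-1}{\ps}$. Using \autoref{thm:mod-res-formula} in the top-degree case $k=n-1$, one finds that $\mres{n-1}{\ps}$ is, up to the nonzero constant $-1/n$, the image of $D\ps^n\in\forms[1]{\X}\otimes\can_\X^{-1}|_\Y$ under the Hodge star $\forms[1]{\X}\xrightarrow{\sim}\der[2n-1]{\X}$ induced by $\mu$. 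The ideal it generates on $\Y$ therefore coincides with that generated by $df$, and the scheme-theoretic equality follows. (Invariantly, this says that the Jacobian ideal of the section $\ps^n\in\can_\X^{-1}$ defining $\Y$ coincides with the ideal generated by $\mres{n-1}{\ps}$.)

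Once the identification is in place, the remaining assertions are immediate from \autoref{cor:adapt-codim3} applied to $(\sL,\nabla)=(\can_\X,\nabla^{\can_\X})$. The corollary gives codimension $\le 3$ for every component of $\AD[2n-2][mod]{\ps}$, hence dimension $\ge 2n-3$ for every component of $\Y_{sing}$. When $\dim\Y_{sing}=2n-3$, the lower bound forces every component to have codimension exactly three, so the corollary supplies the three-term Buchsbaum--Eisenbud resolution, showing that $\Y_{sing}$ is Gorenstein with dualizing sheaf $\can_\X^{-1}|_{\Y_{sing}}$ and fundamental class $c_1c_2-c_3$.

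The main subtlety will be the first step: one must check that the ideals agree scheme-theoretically and not merely set-theoretically. This reduces to a careful bookkeeping of the contraction/Hodge identifications in the proof of \autoref{thm:mod-res-formula}, which is where all of the nontrivial content sits; after that, the deduction of the Gorenstein structure and the Chern-class formula is a formal consequence of the general result for submaximal Pfaffian loci used to prove \autoref{cor:adapt-codim3}.
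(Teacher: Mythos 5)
Your proposal is correct and follows essentially the same route as the paper: identify $\Y_{sing}$ with $\AD[2n-2][mod]{\ps}$ via \autoref{prop:adapt-locus} and \autoref{thm:mod-res-formula}, then invoke \autoref{cor:adapt-codim3} for the canonical module. The only cosmetic difference is that you verify the scheme-theoretic equality in a local trivialization via the Jacobian ideal, whereas the paper phrases the same point invariantly by noting that the contraction $\tr : \forms[1]{\X}\otimes\der[2n]{\X} \to \der[2n-1]{\X}$ is an isomorphism, so the vanishing scheme of $\tr(D\ps^n)$ agrees with that of $D\ps^n$, which is exactly the singular scheme of $\Y$.
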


\begin{proof}
According to \autoref{prop:adapt-locus}, the subscheme $\AD[2n-2][mod]{\ps}$ is defined by the vanishing of the section
$$
i_*\mres{n-1}{\ps} \in \der[2n-1]{\X}|_\Y
$$
where $i : \Y \to \X$ is the inclusion.  By \autoref{thm:mod-res-formula} we have
$$
i_*\mres{n-1}{\ps} = -\frac{1}{n}\tr(D\ps^n) \in \cohlgy[0]{\Y,\der[2n-1]{\X}|_\Y}.
$$
Notice that the contraction
$$
\tr : \forms[1]{\X} \otimes \der[2n]{\X} \to \der[2n-1]{\X}
$$
is an isomorphism, so the vanishing scheme of $\tr(D\ps^n)$ coincides identically with the vanishing scheme of $D\ps^n$ itself.  But the latter section defines the singular scheme of $\Y$.  Hence $\Y_{sing} = \AD[2n-2][mod]{\ps}$ as schemes.  The rest of the theorem now follows from \autoref{cor:adapt-codim3}.
\end{proof}

\subsection{Fano manifolds}
We now describe several properties of the degeneracy loci of Poisson structures on Fano varieties. 

\begin{lemma}\label{lem:fano-forms}
If $\X$ is a Fano manifold then $\cohlgy[0]{\X,\forms[q]{\X}} = 0$ for $q > 0$.
\end{lemma}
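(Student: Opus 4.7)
The plan is to invoke Kodaira vanishing and Hodge theory. Since $\X$ is Fano, the line bundle $\can_\X^{-1}$ is ample, so by Kodaira's embedding theorem, $\X$ is projective, hence Kähler. Kodaira's vanishing theorem then applies to the ample line bundle $L = \can_\X^{-1}$: it gives
$$
\cohlgy[i]{\X,\can_\X \otimes \can_\X^{-1}} = \cohlgy[i]{\X,\strc{\X}} = 0
$$
for all $i > 0$. This is the main input.

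Next, I would appeal to the Hodge decomposition on the compact Kähler manifold $\X$: one has $H^k(\X,\complex) = \bigoplus_{p+q=k} H^{p,q}(\X)$ with $H^{p,q}(\X) \cong \cohlgy[q]{\X,\forms[p]{\X}}$, together with the complex-conjugation symmetry $H^{p,q}(\X) \cong \overline{H^{q,p}(\X)}$. In particular, setting $p = 0$ and $q$ arbitrary, one obtains the dimension identity
$$
\dim \cohlgy[0]{\X,\forms[q]{\X}} = \dim H^{q,0}(\X) = \dim H^{0,q}(\X) = \dim \cohlgy[q]{\X,\strc{\X}}.
$$
Combining this with the Kodaira vanishing above yields $\cohlgy[0]{\X,\forms[q]{\X}} = 0$ for every $q > 0$.

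There is no real obstacle here; both Kodaira vanishing and Hodge symmetry are standard, and the only subtlety is confirming that the Fano hypothesis guarantees projectivity so that these tools apply, which follows immediately from the ampleness of $\can_\X^{-1}$. The whole proof is therefore a one-line application once the setup is recognized.
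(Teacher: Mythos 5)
Your proof is correct and is essentially the paper's argument: both rest on Hodge symmetry $\cohlgy[0]{\X,\forms[q]{\X}} \cong \overline{\cohlgy[q]{\X,\strc{\X}}}$ together with Kodaira vanishing for the ample bundle $\can_\X^{-1}$. The only cosmetic difference is that you apply Kodaira vanishing directly to $\strc{\X} = \can_\X \otimes \can_\X^{-1}$, while the paper first passes through Serre duality to kill $\cohlgy[n-q]{\X,\can_\X}$ — two equivalent formulations of the same vanishing.
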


\begin{proof}
By the Hodge decomposition theorem, the space in question is the complex conjugate of $\cohlgy[q]{\X,\strc{\X}}$, which is Serre dual to $\cohlgy[n-q]{\X,\omega_\X}$.  But $\omega_\X$ is anti-ample, so the latter space is zero by Kodaira's vanishing theorem.
\end{proof}

\begin{corollary}\label{cor:no-der-CY}
If $\X$ is a Fano manifold of dimension $n>2$ and $\Y \subset \X$ is a smooth anti-canonical divisor, then
$$
\cohlgy[0]{\Y,\der[q]{\Y}} = 0
$$
for $0<q<n-1$.
\end{corollary}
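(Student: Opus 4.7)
The plan is to reduce the statement to a question about holomorphic forms on $\Y$ via the Calabi--Yau duality, and then to invoke the Lefschetz hyperplane theorem together with \autoref{lem:fano-forms}.

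First, I would observe that since $\Y$ is a smooth anti-canonical divisor in the smooth variety $\X$, adjunction gives $\can_\Y \cong (\can_\X \otimes \strc{\X}(\Y))|_\Y \cong \strc{\Y}$; that is, $\Y$ is Calabi--Yau of dimension $n-1$. Because $\Y$ is smooth, $\der[q]{\Y}$ coincides with $\ext[q]{\tshf{\Y}}$, and the choice of a trivialization of $\can_\Y$ yields an isomorphism
$$
\der[q]{\Y} \;\cong\; \ext[q]{\tshf{\Y}} \;\cong\; \forms[n-1-q]{\Y}.
$$
Hence it suffices to establish that $\cohlgy[0]{\Y,\forms[p]{\Y}} = 0$ for every integer $p$ with $0 < p < n-1$ (taking $p = n-1-q$).

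Next, I would apply the Lefschetz hyperplane theorem. Since $-\can_\X$ is ample, $\Y$ is a smooth ample divisor in $\X$, so the restriction map $\cohlgy[k]{\X} \to \cohlgy[k]{\Y}$ is an isomorphism for $k < n-1 = \dim \Y$, and this isomorphism respects the Hodge decomposition. In particular, for $p < n-1$,
$$
\cohlgy[0]{\X,\forms[p]{\X}} \;\cong\; \cohlgy[0]{\Y,\forms[p]{\Y}}.
$$

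Finally, for $p > 0$ the left-hand side vanishes by \autoref{lem:fano-forms}, and combining this with the isomorphism above yields $\cohlgy[0]{\Y,\forms[p]{\Y}} = 0$ in the range $0 < p < n-1$, as required. There is no real obstacle here: once the Calabi--Yau identification is in place, the content is purely the Lefschetz hyperplane theorem applied to the Hodge numbers $h^{p,0}$, and the hypothesis $n > 2$ is exactly what ensures the range $0 < p < n-1$ is non-empty.
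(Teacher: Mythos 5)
Your proof is correct and follows essentially the same route as the paper: adjunction gives $\can_\Y \cong \strc{\Y}$, hence $\der[q]{\Y} \cong \forms[n-1-q]{\Y}$, and then the Lefschetz hyperplane theorem (compatibly with the Hodge decomposition) together with \autoref{lem:fano-forms} gives the vanishing of $\cohlgy[0]{\Y,\forms[n-1-q]{\Y}}$ in the stated range. No changes needed.
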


\begin{proof}
The line bundle $\can_\Y$ is trivial by adjunction.  We therefore have
$$
\der[q]{\Y} \cong \forms[n-1-q]{\Y}.
$$
But if $0<q<n-1$, then $0<n-1-q < n-1$, and hence
$$
\cohlgy[0]{\Y,\forms[n-1-q]{\Y}}\cong \cohlgy[0]{\X,\forms[n-1-q]{\X}} = 0
$$
by the Lefschetz hyperplane theorem and \autoref{lem:fano-forms}.
\end{proof}

Polishchuk \cite[Theorem 11.1]{Polishchuk1997} showed that if $\ps$ is a Poisson structure on the projective space $\Prj{d}$, and if $\Y \subset \Prj{d}$ is an anti-canonical divisor which is a Poisson subscheme, then $\Y$ must be singular.  This result implies, in particular, that the divisor on which a generically symplectic Poisson structure on $\Prj{2n}$ degenerates is always singular, although it places no bounds on the dimension of the singular locus.  We can now considerably strengthen this statement about the degeneracy divisor:
\begin{theorem}\label{thm:degen-sing}
Let $(\X,\ps)$ be a Poisson Fano manifold of dimension $2n$ with $n > 1$, and suppose that $\ps$ is generically symplectic.  Then the degeneracy divisor $\D[2n-2]{\ps} \subset \X$ is singular, and every component of its singular subscheme has dimension $\ge 2n-3$.
\end{theorem}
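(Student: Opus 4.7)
The plan is to argue by contradiction: assume that $\Y = \D[2n-2]{\ps}$ is smooth, derive a contradiction from the Calabi--Yau geometry of $\Y$, and then invoke \autoref{thm:adapt-sing} to get the dimension bound on $\Y_{sing}$.

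Since $\Y$ is the zero locus of $\ps^n \in \cohlgy[0]{\X, \can_\X^{-1}}$, it is an anticanonical divisor in $\X$. If $\Y$ were smooth, the adjunction formula would give $\can_\Y \cong \strc{\Y}$, so $\Y$ would be a Calabi--Yau variety of dimension $2n-1$. Because $\dim \X = 2n > 2$ and $0 < 2 < 2n - 1$ (the latter using $n \ge 2$), \autoref{cor:no-der-CY} would apply with $q = 2$, yielding $\cohlgy[0]{\Y, \der[2]{\Y}} = 0$. In particular, the induced Poisson structure $\ps|_\Y$---a global section of $\der[2]{\Y}$---would have to vanish identically.

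To contradict this, I would combine the vanishing with the pointwise rank result \autoref{thm:rank-sing}: since $\Y_{sing}=\varnothing$, we have $\D[2n-4]{\ps} \subset \Y_{sing} = \varnothing$, so $\ps$ has rank exactly $2n-2$ at every point of $\Y$. Because $\Y$ is a Poisson subscheme of codimension one, at each $x \in \Y$ the anchor $\pss_x$ kills the conormal direction and has image inside $\tb[x]{Y}$, so it factors as $\pss_x = i \circ (\ps|_\Y)^\sharp_x \circ \pi$ with $\pi : \ctb[x]{X} \twoheadrightarrow \ctb[x]{Y}$ and $i : \tb[x]{Y} \hookrightarrow \tb[x]{X}$. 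Hence $(\ps|_\Y)^\sharp_x$ has the same rank $2n-2 > 0$ as $\pss_x$, contradicting $\ps|_\Y = 0$. So $\Y$ must be singular, and \autoref{thm:adapt-sing} then delivers the dimension bound that every component of $\Y_{sing}$ has dimension $\ge 2n-3$.

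The only subtle point is verifying that the numerical hypotheses of \autoref{cor:no-der-CY} are satisfied in the borderline case $n = 2$: there $\Y$ is a Calabi--Yau threefold, and the decisive vanishing amounts to $\cohlgy[0]{\Y, \forms[1]{\Y}} = 0$, which is handled by the Lefschetz hyperplane theorem together with Kodaira vanishing inside the proof of \autoref{cor:no-der-CY}. Once this is in hand, the entire argument above goes through uniformly for all $n > 1$.
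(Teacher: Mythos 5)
Your proof is correct and follows essentially the same route as the paper: assume $\Y$ smooth, use adjunction and \autoref{cor:no-der-CY} to force $\ps|_\Y = 0$, contradict this via \autoref{thm:rank-sing}, and then invoke \autoref{thm:adapt-sing} for the dimension bound. The only difference is that you spell out the rank comparison between $\pss$ and $(\ps|_\Y)^\sharp$, which the paper leaves implicit, and this is a legitimate filling-in rather than a new approach.
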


\begin{proof}
If $\Y=\D[2n-2]{\ps}$ were smooth, the Poisson structure would vanish on $\Y$ by \autoref{cor:no-der-CY}.  But then $\Y$ would be singular by \autoref{thm:rank-sing}, a contradiction.  The statement about the dimension now follows from \autoref{thm:adapt-sing}.
\end{proof}

Now, using the non-emptiness of the singular locus, we deduce the non-emptiness of the degeneracy locus:
\begin{theorem}\label{thm:bondal}
\mainthm
\end{theorem}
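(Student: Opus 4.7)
The first assertion is formal: $\ps^n$ is a global section of $\det\tshf{\X}=\can_\X^{-1}$, an ample line bundle on the Fano variety $\X$. Either $\ps^n\equiv 0$, in which case $\D[2n-2]{\ps}=\X$, or $\ps^n$ is non-trivial; since an ample line bundle on a positive-dimensional projective variety admits no nowhere-vanishing global section, $\D[2n-2]{\ps}$ is then a non-empty proper hypersurface.

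For the second assertion I would split into cases according to the generic rank $2m$ of $\ps$. If $m\le n-2$ then $\ps^{n-1}\equiv 0$ and $\D[2n-4]{\ps}=\X$, so there is nothing to prove. If $m=n-1$, then Polishchuk's \autoref{thm:polish} applied with $k=n-1$ immediately gives $\D[2n-4]{\ps}$ non-empty with a component of dimension $\ge 2n-3$. The substantive case is $m=n$, where $\ps$ is generically symplectic and the degeneracy divisor $\Y:=\D[2n-2]{\ps}$ is a non-empty anti-canonical hypersurface.

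In this last case, the plan is to chain together the main results already established in the paper. By \autoref{thm:degen-sing}, $\Y_{sing}$ is non-empty and every component has dimension $\ge 2n-3$. By \autoref{thm:adapt-sing}, $\Y_{sing}=\AD[2n-2][mod]{\ps}$ scheme-theoretically. Dualizing the Evens--Lu--Weinstein connection places a Poisson module structure on the ample line bundle $\sL:=\can_\X^{-1}$; since duality only flips the sign of the connection vector field, the associated degeneracy locus $\AD[2n-2]{\nabla}$ coincides with $\AD[2n-2][mod]{\ps}$. I would then apply \autoref{cor:ample-degen} to this ample Poisson module with $k=n-1$: the hypothesis that $\AD[2n-2]{\nabla}$ is non-empty with a component of dimension $\ge 2(n-1)-1 = 2n-3$ is exactly what the two preceding theorems supply, and the conclusion is that $\D[2n-4]{\ps}$ is non-empty with a component of dimension $\ge 2n-3$, as required.

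The main obstacle is not this final assembly, which is a short chain of implications, but the deeper input that $\Y_{sing}$ has the claimed large dimension. That input rests in turn on the modular residue formula of \autoref{sec:mod-res}, the Buchsbaum--Eisenbud-type three-term resolution of \autoref{cor:adapt-codim3} forcing codimension three, and the Bott-type vanishing for adapted Poisson modules along strong Poisson subschemes developed in \autoref{sec:char-class}. Once those three ingredients are in place, the theorem is essentially a case analysis together with one application of \autoref{cor:ample-degen}.
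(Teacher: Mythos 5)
Your proposal is correct and follows essentially the same route as the paper's proof: reduce to the generically symplectic case (handling lower generic rank via \autoref{thm:polish} or trivially), then chain \autoref{thm:degen-sing}, \autoref{thm:adapt-sing}, and \autoref{cor:ample-degen}. Your explicit case split on the generic rank and your remark that dualizing the canonical module yields an ample Poisson module with the same degeneracy locus are just slightly more detailed versions of steps the paper leaves implicit.
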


\begin{proof}
If the rank of $\ps$ is less than $2n$ everywhere, then $\D[2n-2]{\ps} = \X$, and $\D[2n-4]{\ps}$ has dimension $\ge 2n-3$ by \autoref{thm:polish}.  So, we may assume that $\ps$ is generically symplectic, ie. that the section $\ps^n \in \cohlgy[0]{\X,\ext[2n]{\tshf{X}}}$ is non-zero.

Let $\Y = \D[2n-2]{\sigma}$ be the zero locus of $\ps^n$, which is non-empty of dimension $2n-1$ because $\X$ is Fano.  By the previous theorem, $\Y$ is singular, and hence \autoref{thm:adapt-sing} informs us that $\AD[2n-2][mod]{\ps}$ is non-empty of dimension $\ge 2n-3$.  Now we are in the situation of \autoref{cor:ample-degen}, and we conclude that $\D[2n-4]{\ps}$ is non-empty of dimension $\ge 2n-3$, as desired.
\end{proof}

\begin{corollary}
Bondal's \autoref{con:bondal} holds for Fano manifolds of dimension four.
\end{corollary}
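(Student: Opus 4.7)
The strategy is simply to specialize \autoref{thm:bondal} to the case $n=2$ and observe that it covers every case appearing in \autoref{con:bondal} when $\dim \X = 4$. For a connected Fano Poisson fourfold, the conjecture requires that $\D[2k]{\ps}$ be non-empty with a component of dimension at least $2k+1$ for every integer $k \ge 0$ with $2k < 4$. The only values to check are therefore $k = 0$ and $k = 1$.

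For $k = 1$ we need $\D[2]{\ps} = \D[2n-2]{\ps}$ to be non-empty with a component of dimension $\ge 3$. The first clause of \autoref{thm:bondal} provides exactly this: the locus is either all of $\X$, which has dimension $4 \ge 3$, or else a non-empty hypersurface of dimension $2n - 1 = 3$. Either way the required bound $2k+1 = 3$ holds.

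For $k = 0$ we need $\D[0]{\ps} = \D[2n-4]{\ps}$ to be non-empty with a component of dimension $\ge 1$. The second clause of \autoref{thm:bondal} supplies a component of dimension at least $2n - 3 = 1 = 2k+1$, so this case is also settled.

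Having verified both values of $k$, we conclude that \autoref{con:bondal} holds on every Fano manifold of dimension four. There is essentially no obstacle in this last step: \autoref{thm:bondal} was designed to give exactly this corollary, and the main work already went into proving that theorem, most notably in bounding the dimension of $\D[2n-4]{\ps}$ via the interplay between the singular locus of the degeneracy divisor, the canonical Poisson module, and the Bott-type vanishing of \autoref{cor:adapt-residue}.
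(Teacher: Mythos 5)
Your proposal is correct and is exactly the argument the paper intends: the corollary is an immediate specialization of \autoref{thm:bondal} to $n=2$, with the cases $k=1$ and $k=0$ handled by the hypersurface statement and the bound $\dim \D[2n-4]{\ps} \ge 2n-3 = 1$, respectively. Nothing further is needed.
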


\section{Residues and elliptic normal curves}
\label{sec:elliptic}
In this section, we apply our results to Poisson structures associated with elliptic normal curves.  Let $\X$ be an elliptic curve, and let $\sE$ be a stable vector bundle over $\X$ of rank $r$ and degree $d$, with $\gcd(r,d)=1$.  Feigin and Odesski{\u\i}~\cite{Feigin1989,Feigin1998} have explained that the projective space $\Prj{d-1} = \Prj{}(\Ext[1]{\sE,\strc{X}}[\X])$ inherits a natural Poisson structure; see also \cite{Polishchuk1998} for a generalization of their construction.

For the particular case when $\sE = \sL$ is a line bundle of degree $d$, the projective space is
$$
\Prj{d-1} = \Prj{}(\cohlgy[1]{\X,\sL^\vee}),
$$
which is the Kodaira embedding space for $\sL$.  Hence $\X$ is embedded in $\Prj{d-1}$ as an elliptic normal curve of degree $d$.  

The vector space $\cohlgy[1]{\X,\sL^\vee}$ inherits a homogeneous Poisson structure~\cite{Bondal1993,Feigin1989}.  This homogeneous Poisson structure has a unique homogeneous symplectic leaf of dimension $2k+2$, given by the cone over the $k$-secant\footnote{We warn the reader that our index $k$ differs from other sources.} variety $\Sec[k]{\X}$~\cite{Feigin1989}---that is, the union of all the $k$-planes in $\Prj{d-1}$ which pass through $k+1$ points on $\X$ (counted with multiplicity).   It follows that the induced Poisson structure $\ps$ on $\Prj{d-1}$ has rank $2k$ on $\Sec[k]{\X}\setminus\Sec[k-1]{\X}$, and so
$$
\D[2k]{\ps} \cap \Sec[k+1]{\X} = \Sec[k]{\X}
$$
for all $k < (d-1)/2$.  When $d=2n+1$ is odd, this gives an equality
$$
\Sec[k]{\X} = \D[2k]{\ps}_{red}
$$
of the underyling reduced schemes, because $\ps$ is symplectic on $\Prj{2n} \setminus \Sec[n-1]{\X}$.  When $d$ is even, though, $\D[2k]{\ps}$ may have additional components.  For example, when $d=4$, the Poisson structure on $\Prj{3}$ vanishes on the elliptic curve $\X \subset \Prj{3}$ together with four isolated points~\cite{Polishchuk1997}.  We believe that $\Sec[k]{\X}$ is always a component of $\D[2k]{\ps}$ and that the latter scheme is reduced; see \autoref{con:reduced}.

The secant varieties have been well-studied:
\begin{theorem}[{\cite[\S8]{GrafvBothmer2004}}]\label{thm:secants}
The secant variety $\Sec[k]{\X}$ enjoys the following properties:
\begin{enumerate}
\item $\Sec[k]{\X}$ has dimension
$$
\dim{\Sec[k]{\X}} = 2k+1
$$
and degree
$$
\deg \Sec[k]{\X} = { d - k -2 \choose k} + { d-k-1 \choose k+1}.
$$
\item $\Sec[k]{\X}$ is normal and arithmetically Gorenstein.  Its dualizing sheaf is trivial.
\item $\Sec[k]{\X} \setminus \Sec[k-1]{\X}$ is smooth.
\end{enumerate}
\end{theorem}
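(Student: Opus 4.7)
The plan is to realize $\Sec[k]{\X}$ as the image of a natural projective bundle over the symmetric power $\X^{(k+1)}$ and to extract each assertion from that model. If $D = p_0 + \cdots + p_k$ is an effective divisor of degree $k+1$ on $\X$ with $k+1 \le d-2$, Riemann--Roch yields $h^0(\X,\sL(-D)) = d-k-1$, so the support of $D$ is linearly independent in $\Prj{d-1}$ and spans a unique $k$-plane $\langle D\rangle \cong \Prj{k}$. Globalizing over $\X^{(k+1)}$ produces a $\Prj{k}$-bundle $\pi : \widetilde{S} \to \X^{(k+1)}$ and an evaluation map $\epsilon : \widetilde{S} \to \Prj{d-1}$ whose image is $\Sec[k]{\X}$. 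Since $\dim \widetilde{S} = 2k+1$ and the uniqueness argument below shows $\epsilon$ is generically injective, the dimension assertion in (1) follows at once.

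For (3) and the regularity-in-codimension-one half of (2), I would show that $\epsilon$ is an isomorphism over the open stratum $\Sec[k]{\X} \setminus \Sec[k-1]{\X}$: if two distinct effective divisors $D, D'$ of degree $k+1$ had overlapping spans, then $h^0(\X,\sL(-D-D')) = d - 2k - 2$ would force their common linear span to lie in a $(k-1)$-plane, locating the intersection point in $\Sec[k-1]{\X}$. Smoothness of the stratum then follows because $\widetilde{S}$ is a $\Prj{k}$-bundle over a smooth base. The degree formula is obtained by intersecting $\Sec[k]{\X}$ with a generic linear subspace of the appropriate codimension and pushing the intersection down to $\X^{(k+1)}$ via $\pi$; by the projection formula this reduces to an intersection number on $\X^{(k+1)}$ involving powers of the $\pi$-relative hyperplane class, and evaluating via the standard description of the Picard group of $\X^{(k+1)}$ in terms of the Abel--Jacobi map and the theta divisor returns the claimed sum of binomials.

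The subtlest part is the arithmetically Gorenstein assertion in (2) together with the identification of the dualizing sheaf as $\mathcal{O}_{\Sec[k]{\X}}$. My approach combines adjunction with an explicit resolution. On one hand, the relative canonical $\can_\pi$ is $\mathcal{O}(-(k+1))$ along the $\Prj{k}$-fibers while $\can_{\X^{(k+1)}}$ is represented by the theta divisor, so a Leray-type computation shows $\epsilon_* \can_{\widetilde{S}} \cong \mathcal{O}_{\Sec[k]{\X}}$, identifying the candidate dualizing sheaf as trivial. On the other hand, to upgrade this to a statement about the affine cone I would invoke the classical self-dual Pfaffian-type minimal free resolution of $\mathcal{O}_{\Sec[k]{\X}}$ of length $d-2k-2$ available for secants of elliptic normal curves: its existence forces arithmetic Cohen--Macaulayness, and the Buchsbaum--Eisenbud structure theorem then promotes self-duality of the resolution to the Gorenstein property with trivial top term. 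I expect constructing and verifying this self-dual resolution to be the main obstacle, since the adjunction argument only produces a candidate on the projective side. Finally, normality in (2) follows from Serre's criterion $R_1 + S_2$: the $R_1$ condition was established above, and $S_2$ is a consequence of the arithmetic Cohen--Macaulayness just obtained.
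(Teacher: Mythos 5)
The first thing to note is that the paper does not prove this statement at all: it is quoted directly from \cite[\S 8]{GrafvBothmer2004}, so there is no internal argument to compare yours against, and your sketch is really an attempt to reconstruct the external reference. Within that attempt there are two genuine gaps. The first concerns smoothness of $\Sec[k]{\X}\setminus\Sec[k-1]{\X}$. Uniqueness of the spanning divisor only makes $\epsilon$ \emph{bijective} over the open stratum, and a bijective morphism from a smooth variety need not have smooth image (think of the normalization of a cuspidal cubic). To promote bijectivity to an isomorphism you would need either normality of the stratum --- which is circular, since you deduce normality only later from $R_1+S_2$, with $R_1$ supplied by this very smoothness claim --- or injectivity of the differential of $\epsilon$, i.e.\ a Terracini-type tangent-space computation, which the sketch never performs. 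The uniqueness step itself is also stated too loosely: knowing that $\langle D\rangle\cap\langle D'\rangle$ lies in some $(k-1)$-plane does not place a point of it in $\Sec[k-1]{\X}$; the correct argument uses that divisors of degree $\le d-1$ on an elliptic normal curve impose independent conditions, so that $\langle D\rangle\cap\langle D'\rangle$ is exactly the span of the common divisor of $D$ and $D'$, which has degree $\le k$. That repair is routine, but it must be made.

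The second and larger gap is part (2). Your adjunction/Leray argument produces the sheaf $\epsilon_*\can_{\widetilde{S}}$, but identifying this with the dualizing sheaf of $\Sec[k]{\X}$ already presupposes that $\Sec[k]{\X}$ is normal and Cohen--Macaulay with (say) rational singularities, none of which is available at that stage; as you concede, it only gives a ``candidate.'' The decisive input --- a self-dual, Pfaffian-type minimal free resolution of the homogeneous coordinate ring, from which arithmetic Cohen--Macaulayness and the Gorenstein property with trivial dualizing sheaf would follow --- is precisely the content of the cited work of Graf von Bothmer and Hulek, so ``invoking'' it as classical does not prove anything beyond restating the citation the paper already makes. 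As it stands, your proposal establishes the dimension statement, and (after the repairs above) the smoothness of the open stratum, while the degree computation via $\xi^{2k+1}$ on the $\Prj{k}$-bundle over $\X^{(k+1)}$ is a sound plan that still requires carrying out the Segre-class evaluation on the symmetric product to recover the stated binomial sum; but the arithmetically Gorenstein assertion is deferred rather than proven.
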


\begin{remark}
Since $\Sec[k]{\X}$ is contained in $\D[2k]{\ps}$, we have that $$\dim{\D[2k]{\ps}} \ge 2k+1$$ for $2k < d-1$ in accordance with \autoref{con:bondal}.  Indeed, Bondal cited these examples as motivation for his conjecture.\qed
\end{remark}

With this information in hand, we can now make a conclusion about the singular scheme of the highest secant variety of an odd-degree curve:
\begin{theorem}\label{thm:sec-sing}
Let $\X \subset{\Prj{2n}}$ be an elliptic normal curve of degree $2n+1$, and let $\Y = \Sec[n-1]{\X}$, a hypersurface of degree $2n+1$.  Then the singular subscheme $\Y_{sing}$ of $\Y$ is a $(2n-3)$-dimensional Gorenstein scheme of degree $2{2n+2 \choose 3}$ whose dualizing sheaf is $\strc{\Y_\mathit{sing}}(2n+1)$.
\end{theorem}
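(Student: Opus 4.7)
The plan is to invoke the Fe{\u\i}gin--Odesski{\u\i} Poisson structure $\ps$ on $\Prj{2n}$ associated with a line bundle of degree $2n+1$, and then apply \autoref{thm:adapt-sing}. Since $d=2n+1$ is odd, $\ps$ is generically symplectic, and the preceding discussion identifies $\Sec[n-1]{\X}$ with the reduced scheme underlying $\D[2n-2]{\ps}$. To upgrade this to a scheme-theoretic equality, observe that $\ps^n$ is a nonzero section of $\can_{\Prj{2n}}^{-1}=\strc{\Prj{2n}}(2n+1)$, so $\D[2n-2]{\ps}$ is a hypersurface of degree $2n+1$; on the other hand $\Sec[n-1]{\X}$ is itself a degree-$(2n+1)$ hypersurface by \autoref{thm:secants}, and is reduced because it is normal. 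Hence $\Y=\D[2n-2]{\ps}$ as closed subschemes of $\Prj{2n}$.

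Next I would apply \autoref{thm:adapt-sing}: every component of $\Y_{sing}$ has dimension $\ge 2n-3$. For the matching upper bound, I would use that a normal scheme has singular locus of codimension at least two (Serre's criterion), so $\dim\Y_{sing}\le\dim\Y-2=2n-3$. Non-emptiness can also be verified concretely: $\Sec[n-2]{\X}\subset\D[2n-4]{\ps}\subset\Y_{sing}$ by \autoref{thm:rank-sing}, and $\dim\Sec[n-2]{\X}=2n-3$ by \autoref{thm:secants}. Therefore $\dim\Y_{sing}=2n-3$ exactly, so the boundary case of \autoref{thm:adapt-sing} applies: $\Y_{sing}$ is Gorenstein with dualizing sheaf $\can_{\Prj{2n}}^{-1}|_{\Y_{sing}}=\strc{\Y_{sing}}(2n+1)$ and fundamental class $[\Y_{sing}]=c_1c_2-c_3\in\chow[3]{\Prj{2n}}$.

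It remains to read off the degree. The Euler sequence gives total Chern class $c(\tshf{\Prj{2n}})=(1+H)^{2n+1}$, so $c_j=\binom{2n+1}{j}H^j$. A short algebraic manipulation then yields
$$
c_1c_2-c_3=\left[(2n+1)\binom{2n+1}{2}-\binom{2n+1}{3}\right]H^3=\frac{4n(n+1)(2n+1)}{3}H^3=2\binom{2n+2}{3}H^3,
$$
so $\deg\Y_{sing}=2\binom{2n+2}{3}$. The only real obstacle is the scheme-theoretic identification $\Y=\D[2n-2]{\ps}$ in the first paragraph; once that is in place, the structure result \autoref{thm:adapt-sing} together with standard facts about $\Sec[n-1]{\X}$ and the cohomology of $\Prj{2n}$ supply everything else.
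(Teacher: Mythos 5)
Your proposal is correct and follows essentially the same route as the paper: identify $\Y$ with $\D[2n-2]{\ps}$ scheme-theoretically via the degree count, invoke \autoref{thm:adapt-sing} in the codimension-three case, and evaluate $c_1c_2-c_3$ on $\Prj{2n}$ to get $2\binom{2n+2}{3}$. The only (harmless) cosmetic difference is that you bound $\dim\Y_{sing}$ from above by normality of $\Sec[n-1]{\X}$, whereas the paper notes directly from \autoref{thm:secants} that the singular locus is supported on $\Sec[n-2]{\X}$, which has dimension $2n-3$.
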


\begin{proof}
By the discussion above, $\Y$ is the reduced scheme underlying the degeneracy locus $\D[2n-2]{\ps}$, which is an anti-canonical divisor.  Since the degree of $\can_{\Prj{2n}}^{-1}$ is $2n+1$, we must have $\Y = \D[2n-2]{\ps}$ as schemes.  The singular subscheme is supported on the subvariety $\Sec[n-2]{\X}$, which has codimension three in $\Prj{2n}$.  By \autoref{thm:adapt-sing}, $\Y_{sing}$ is Gorenstein with dualizing sheaf given by the restriction of $\can_{\Prj{2n}}^{-1} \cong \strc{\Prj{\mathit{n}}}(2n+1)$.  Its fundamental class is given by $c_1c_2-c_3$, where $c_j$ is the $j^{th}$ Chern class of $\Prj{2n}$.  But $c_j$ has degree $2n+1 \choose j$, and hence one obtains the formula for the degree of $\Y_{sing}$ by a straightforward computation.
\end{proof}

\begin{corollary}
In the situation of the previous theorem, $\Y_{sing}$ is not reduced.  Its geometric multiplicity (that is, the length of its structure sheaf at the generic point) is equal to eight.
\end{corollary}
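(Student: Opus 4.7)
The plan is to compare degrees. The support of $\Y_{sing}$ is exactly $\Sec[n-2]{\X}$: by part (3) of \autoref{thm:secants}, the complement $\Sec[n-1]{\X}\setminus\Sec[n-2]{\X}$ is smooth, so $\Y_{sing}$ is set-theoretically contained in $\Sec[n-2]{\X}$; conversely, by \autoref{thm:sec-sing}, $\Y_{sing}$ has dimension $2n-3$, which coincides with the dimension of the irreducible variety $\Sec[n-2]{\X}$, so the support must be all of $\Sec[n-2]{\X}$. Since $\Sec[n-2]{\X}$ is irreducible of the same dimension as $\Y_{sing}$, the geometric multiplicity $m$ (the length of $\strc{\Y_{sing}}$ at the generic point of $\Sec[n-2]{\X}$) is related to the degrees by $\deg\Y_{sing} = m\cdot \deg\Sec[n-2]{\X}$.

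Next I would compute each degree. Setting $d=2n+1$ and $k=n-2$ in part (1) of \autoref{thm:secants},
$$\deg\Sec[n-2]{\X} = \binom{n+1}{n-2}+\binom{n+2}{n-1}=\binom{n+1}{3}+\binom{n+2}{3}=\frac{n(n+1)(2n+1)}{6},$$
after combining the two binomials. On the other hand, \autoref{thm:sec-sing} gives
$$\deg\Y_{sing}=2\binom{2n+2}{3}=\frac{2(n+1)(2n+1)(2n)}{3}=\frac{4n(n+1)(2n+1)}{3}.$$
Dividing yields $m=8$, so $\Y_{sing}$ is not reduced and has geometric multiplicity $8$.

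There is no real obstacle; the whole argument is a length computation that uses only already-established facts. The only subtlety worth flagging is the identification of the support: one must confirm that every generic point of $\Sec[n-2]{\X}$ actually lies in $\Y_{sing}$, which follows because $\dim\Y_{sing}=2n-3=\dim\Sec[n-2]{\X}$ and the support is a closed subvariety of the irreducible variety $\Sec[n-2]{\X}$, hence equals it. Once this is in place, the ratio of degrees immediately gives the multiplicity.
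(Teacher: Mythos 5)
Your argument is correct and is essentially the paper's own proof: identify the reduced scheme underlying $\Y_{sing}$ with $\Sec[n-2]{\X}$, then compare the degree $2\binom{2n+2}{3}$ from \autoref{thm:sec-sing} with the secant-variety degree formula of \autoref{thm:secants} to get the ratio $8$. Your computations (both degrees equal $\tfrac{4n(n+1)(2n+1)}{3}$ and $\tfrac{n(n+1)(2n+1)}{6}$ respectively) check out, and the supporting remark on why the support is all of the irreducible variety $\Sec[n-2]{\X}$ just makes explicit what the paper leaves implicit.
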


\begin{proof}
The reduced scheme underlying $\Y_{sing}$ is $\Sec[n-2]{\X}$.  Using the formula in~\autoref{thm:secants}, we find that the degree of $\Y_{sing}$ is eight times that of $\Sec[n-2]{\X}$, and the result follows.
\end{proof}

We now return to the case when $d$ is arbitrary and study the modular residues:
\begin{proposition}\label{prop:res-nonvan}
The modular residue
$$
\mres{k}{\ps} \in \cohlgy[0]{\D[2k]{\ps},\der[2k+1]{{\D[2k]{\ps}}} }
$$
induces a non-vanishing multiderivation on $\Sec[k]{\X}\subset\D[2k]{\ps}$, in accordance with the fact that the dualizing sheaf of $\Sec[k]{\X}$ is trivial.
\end{proposition}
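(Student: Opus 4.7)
By \autoref{thm:secants}, $\Sec[k]{\X}$ is connected, normal, projective, and Gorenstein, of dimension $2k+1$ with trivial dualizing sheaf, and its smooth locus is $U=\Sec[k]{\X}\setminus\Sec[k-1]{\X}$. On $U$ the sheaf $\der[2k+1]{\Sec[k]{\X}}$ is isomorphic to $\can_{\Sec[k]{\X}}^{-1}|_U\cong\strc{U}$, and its reflexive extension to all of $\Sec[k]{\X}$ is $\strc{\Sec[k]{\X}}$. Hence $\cohlgy[0]{\Sec[k]{\X},\der[2k+1]{\Sec[k]{\X}}}=\complex$, so $\mres{k}{\ps}|_{\Sec[k]{\X}}$ is a constant section of a trivial line bundle; it vanishes nowhere or everywhere, and it suffices to check non-vanishing at a single point of $U$.

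At $p\in U$, the Poisson tensor has rank exactly $2k$ and the symplectic leaf $L_p$ is a hypersurface in $\Sec[k]{\X}$. Because $\Sec[k]{\X}$ is a reduced irreducible component of the strong Poisson subscheme $\D[2k]{\ps}$ (\autoref{prop:degen-strong}), it is itself a strong Poisson subscheme by \autoref{lem:strg-subvar}, so the modular vector field $Z$ associated with any local trivialization of $\can_{\Prj{d-1}}$ is tangent to $\Sec[k]{\X}$. By \autoref{rem:higgs-res}, $i_*\mres{k}{\ps}_p=(Z\wedge\ps^k)_p$, and this is nonzero in $\wedge^{2k+1}T_p\Sec[k]{\X}$ precisely when $Z(p)\notin T_pL_p$.

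To establish this transversality I pass to the affine cone $V=\cohlgy[1]{\X,\sL^\vee}$. The lifted Poisson tensor $\tps$ is quadratic---hence weight zero under the Euler field $E$---and is unimodular, i.e.\ the divergence $\sum_j\partial_j\tps^{ij}$ with respect to $\Omega_V=dz_1\wedge\cdots\wedge dz_d$ vanishes; this is already visible in the $d=3$ Jacobian model $\tps^{ij}=\varepsilon^{ijk}\partial_k F$, and can be checked directly from the Fe{\u\i}gin--Odesski{\u\i}\ construction in general. In a chart $z_d\neq 0$ of $\Prj{d-1}$, trivializing $\can_{\Prj{d-1}}$ by $d\bar z_1\wedge\cdots\wedge d\bar z_{d-1}$ corresponds upstairs to the weight-zero trivialization $\Omega_V/z_d^d$, so the transformation rule for modular vector fields under rescaling gives that the lift of $Z$ is $\tilde Z=-d\,\tps^\sharp(d\log z_d)$, a nonzero scalar multiple of $X_{z_d}:=\tps^\sharp(dz_d)$.

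The decisive point is then to check, at a lift $q\in\hat Y=\hat{\Sec[k]{\X}}$ of $p$, that $X_{z_d}(q)\notin\tps^\sharp(E^\perp_q)$. Since $\hat Y$ is a cone, $E(q)\in T_q\hat Y=\tps^\sharp(T^*_qV)$ forces $\ker\tps^\sharp|_q\subset E^\perp_q$; meanwhile $dz_d(E(q))=z_d(q)\neq 0$ places $dz_d|_q$ outside $E^\perp_q$. A hypothetical relation $\tps^\sharp(dz_d)|_q=\tps^\sharp(\alpha)$ with $\alpha\in E^\perp_q$ would yield $dz_d|_q-\alpha\in\ker\tps^\sharp|_q\subset E^\perp_q$, contradicting $dz_d|_q\notin E^\perp_q$. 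Projecting by $d\pi_q$---whose kernel $\complex E(q)$ lies inside $\tps^\sharp(E^\perp_q)$---then gives $Z(p)\in T_p\Sec[k]{\X}\setminus T_pL_p$, as required. The main obstacle in executing this plan is verifying unimodularity of $\tps$ on $V$ uniformly in $d$; once this is in hand, the rest is elementary linear algebra on the cone.
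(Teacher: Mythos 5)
Your strategy is viable and, at bottom, rests on the same nontrivial input as the paper's proof: the unimodularity of the homogeneous Poisson structure $\tps$ on the cone $\V=\cohlgy[1]{\X,\sL^\vee}$. The paper does not verify this either; it is imported by citation in \autoref{rem:bd-uni}, so the step you flag as ``the main obstacle'' can simply be cited rather than ``checked directly''. Where you genuinely diverge is in how unimodularity is exploited. The paper works with the tautological module, uses the blow-down to $\V$ to show that the open stratum $\Sec[k]{\X}\setminus\Sec[k-1]{\X}$ is a Poisson subscheme to which the module restricts, identifies non-vanishing of the residue there with the fact that $\AD[2k]{\nabla}$ misses that stratum (\autoref{prop:adapt-locus}, \autoref{rem:adapt-lb}), and then extends across $\Sec[k-1]{\X}$ by reflexivity, normality and the codimension-two bound. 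You instead trivialize $\der[2k+1]{{\Sec[k]{\X}}}$ globally from the trivial dualizing sheaf, so that everything reduces to one transversality check $Z(p)\notin T_pL_p$ at a single smooth point, carried out by an Euler-field computation on the cone. The constancy argument is a clean substitute for the reflexive-extension step, and the linear algebra at the lift $q$ is correct, including the parenthetical claim $E(q)\in\tps^\sharp(E^\perp_q)$: any $\beta$ with $\tps^\sharp\beta=E(q)$ automatically annihilates $E(q)$ by skew-symmetry.

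Two steps need more than assertion. First, the claim that the affine trivialization of $\can_{\Prj{d-1}}$ on the chart $\{z_d\neq 0\}$ ``corresponds upstairs'' to $\Omega_V/z_d^d$, so that the modular vector field downstairs is the projection of the modular vector field of $\tps$ relative to that weight-zero volume, is true but is precisely the upstairs/downstairs compatibility the paper buries inside ``the blow-down is a Poisson morphism'' (\autoref{rem:bd-uni}); it deserves the short local computation $\Omega_V=z_d^{d-1}\,\mu\wedge dz_d$ (with $\mu$ the affine volume form), from which one sees that the upstairs modular field differs from the lift of the downstairs one only by a vertical term, killed by the projection. Second, your tangency argument for $Z$ invokes ``$\Sec[k]{\X}$ is a reduced irreducible component of $\D[2k]{\ps}$''; the paper establishes this only for odd $d$, where $\Sec[k]{\X}=\D[2k]{\ps}_{red}$, and for even $d$ it is left as a belief adjacent to \autoref{con:reduced}, so as written this step is unjustified in general. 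It is repairable from your own computation: the lift of $Z$ is a multiple of $\tps^\sharp(dz_d)$, hence lies in the image of $\tps^\sharp$ and is tangent to the cone over the open stratum, so $Z$ is tangent to the stratum and, by density and reducedness, to $\Sec[k]{\X}$; this also furnishes the ``induces a multiderivation'' part of the proposition, which your first paragraph implicitly assumes.
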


\begin{proof}
Because $\can_{\Prj{d-1}} \cong \strc{\Prj{\mathit{d}-1}}(-d)$, the tautological bundle is a Poisson module.  With respect to this Poisson module structure, the residues of $\strc{\Prj{\mathit{d}-1}}(-1)$ are non-zero multiples of the residues of $\can_{\Prj{d-1}}$.  We may therefore work with $\strc{\Prj{\mathit{d}-1}}(-1)$ instead of $\can_{\Prj{d-1}}$.

Let $\V = \cohlgy[1]{\X,\sL^\vee}$ so that $\Prj{d-1} = \Prj{}(\V)$.   Let $\V'$ be the total space of the tautological bundle $\strc{\Prj{\mathit{d}-1}}(-1)$ and let $\pi : \V' \to \Prj{d-1}$ be the projection.  Being the total space of an invertible Poisson module, $\V'$ is a Poisson variety.  For a subscheme $\Y \subset \Prj{d-1}$, the preimage $\pi^{-1}(\Y)$ in $\V'$ is a Poisson subscheme if and only if $\Y$ is a Poisson subscheme of $\Prj{d-1}$ and $\nabla|_\Y$ is a Poisson module.  (This fact follows from the definition of the Poisson bracket on $\V'$ in \autoref{sec:mod-dist}.)

The key point is that the blow-down map $\V' \to \V$ is a Poisson morphism; see \autoref{rem:bd-uni}.  Let $\Y = \Sec[k]{\X}\setminus\Sec[k-1]{\X}$ and $\Y' = \pi^{-1}(\Y)$.  Then the blow-down identifies $\Y'\setminus 0 $ with the cone over $\Y$ in $\V$, which is a $2k$-dimensional symplectic leaf of the Poisson structure.  Hence  $\Y'\setminus 0$ is a symplectic leaf in $\V'$.  In particular, it is a Poisson subvariety, and so $\Y \subset \Prj{d-1}$ is a Poisson subscheme to which $\strc{\Prj{\mathit{d}-1}}(-1)$ restricts as a Poisson module. It follows that the residue is tangent to $\Y$.  
By \autoref{rem:adapt-lb}, the residue is actually non-vanishing on $\Y$.

To conclude that the residue is gives a non-vanishing multiderivation on all of $\Sec[k]{\X}$, we note that the sheaf $\der[2k+1]{{\Sec[k]{\X}}}$ is reflexive.  We know from \autoref{thm:secants} that $\Sec[k]{\X}$ is normal and $\Sec[k-1]{\X} = \Sec[k]{\X}\setminus \Y$ has codimension two, so it follows from~\cite{Hartshorne1980} that the residue on $\Y$ extends to a non-vanishing multiderivation on all of $\Sec[k]{\X}$.
\end{proof}

\begin{remark}\label{rem:bd-uni}
The fact that the blow-down is a Poisson morphism in this case is not entirely obvious.  As will be explained in a forthcoming paper~\cite{Pym2012} by the second author, we need to know that the Poisson structure on $\V$ is \defn{unimodular}.  For the examples in question, the unimodularity was proven in~\cite{Ortenzi2011}.\qed
\end{remark}

\begin{remark}
This proposition shows that in every dimension, there are examples of Poisson manifolds whose modular residues are all nonzero.  We therefore argue that these residues provide a possible explanation for the dimensions appearing in Bondal's conjecture.  Since these residues are non-vanishing on the secant varieties, it may be reasonable to expect the degeneracy loci of Poisson structures to be highly singular Calabi-Yau varieties.\qed
\end{remark}

We close with a conjecture regarding the degeneracy loci of the Fe{\u\i}gin-Odesski{\u\i} Poisson structures, which we hope to address issue in future work.

\begin{conjecture}\label{con:reduced}
The degeneracy loci of the Fe{\u\i}gin-Odesski{\u\i} Poisson structures on $\Prj{d-1}$  are reduced.
\end{conjecture}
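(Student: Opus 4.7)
I would attempt to verify Serre's criterion $R_0+S_1$ for reducedness of $\D[2k]{\ps}$: generic reducedness along each component, together with the absence of embedded primes. The top case $k=(d-3)/2$ with $d$ odd is already settled by Theorem~\ref{thm:sec-sing}, where $\D[2k]{\ps}$ is a hypersurface, so the crux is handling the lower-rank degeneracy loci.

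For generic reducedness, the key input is Proposition~\ref{prop:res-nonvan}: the modular residue $\mres{k}{\ps}$ does not vanish on $\Sec[k]{\X}$. Combined with the formula $\mres{k}{\ps}=-\tfrac{1}{k+1}\tr(D\ps^{k+1})$ of Theorem~\ref{thm:mod-res-formula}, this shows that the first-order derivative of the defining section $\ps^{k+1}$ is non-zero at generic points of $\Sec[k]{\X}$. For a smooth point $x$ of $\Sec[k]{\X}$ --- which means $x\in\Sec[k]{\X}\setminus\Sec[k-1]{\X}$ by Theorem~\ref{thm:secants} --- one would upgrade this scalar non-vanishing to the rank condition that $j^1\ps^{k+1}|_x$, viewed as a linear map from the conormal space of $\Sec[k]{\X}$ at $x$ into the fibre $\der[2k+2]{\Prj{d-1}}|_x$, is injective. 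Such transversality would force the generators of $\sI{\D[2k]{\ps}}$ to cut out $\Sec[k]{\X}$ with multiplicity one at $x$. When $d$ is even, an analogous residue argument would be needed for any additional components, such as the four isolated points arising on $\Prj{3}$.

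For the absence of embedded components, the natural approach is to establish that $\D[2k]{\ps}$ is Cohen-Macaulay, after which $R_0+S_1$ yields reducedness. Since $\Sec[k]{\X}$ is arithmetically Gorenstein by Theorem~\ref{thm:secants}, one strategy is to pull the problem back to the affine cone in $\V=\cohlgy[1]{\X,\sL^\vee}$ via the Poisson blow-down of Remark~\ref{rem:bd-uni} and construct an explicit Cohen-Macaulay free resolution of $\strc{\D[2k]{\ps}}$. A natural inductive setup is the chain $\Sec[0]{\X}\subset\Sec[1]{\X}\subset\cdots$, where at each stage one hopes to apply the three-term Buchsbaum-Eisenbud resolution of Corollary~\ref{cor:adapt-codim3} to the inclusion $\Sec[k-1]{\X}\subset\Sec[k]{\X}$ as a strong Poisson subscheme.

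The main obstacle is this Cohen-Macaulayness step. The defining ideal of $\D[2k]{\ps}$ is generated by the coefficients of $\ps^{k+1}$, which for the Fe{\u\i}gin--Odesski{\u\i} structures are intricate combinations of theta functions whose ideal-theoretic behaviour is hard to control directly. A possible avenue around this is a degeneration argument: the Fe{\u\i}gin--Odesski{\u\i} structures form a family over the moduli of elliptic curves, and degenerating to a nodal cubic yields a combinatorial (potentially toric) limit where the defining ideals can be analyzed explicitly; reducedness could then be transferred to nearby fibres by openness in flat families. Additional input from the Sklyanin algebras that quantize these Poisson brackets may also supply useful homological information about the ideals.
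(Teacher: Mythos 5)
The statement you are addressing is a conjecture: the paper does not prove it, and its ``proof'' environment is explicitly labelled \emph{Evidence}, consisting of the residue non-vanishing of \autoref{prop:res-nonvan} as a heuristic genericity condition, the top-degree case for odd $d$ (\autoref{thm:sec-sing}), and explicit checks for $d=3,4,5$. Your proposal is a research plan rather than a proof, and its two central steps both contain genuine gaps. First, for generic reducedness: the non-vanishing of $\mres{k}{\ps}$, via \autoref{thm:mod-res-formula}, only tells you that one particular contraction $\tr(D\ps^{k+1})$ is non-zero along $\Sec[k]{\X}$. Reducedness at a smooth point $x\in\Sec[k]{\X}\setminus\Sec[k-1]{\X}$ requires the differentials of the coefficients of $\ps^{k+1}$ to span the full conormal space, which has dimension $d-2k-2\ge 3$ for the lower loci; you state this ``upgrade'' as something ``one would'' do, but give no argument, and the paper itself warns that ``the non-triviality of the residue is not sufficient to conclude that $\D[2k]{\ps}$ is reduced.'' (In the hypersurface case $d=2n+1$, $k=n-1$ the paper does not use transversality at all: it compares the degree $2n+1$ of the anticanonical divisor with the degree of $\Sec[n-1]{\X}$, which forces equality of schemes.) The extra components for even $d$ are likewise left to an unspecified ``analogous residue argument.''

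Second, the $S_1$ step is unsupported. \autoref{cor:adapt-codim3} and the Buchsbaum--Eisenbud three-term resolution apply only to the codimension-three situation of submaximal Pfaffians of an odd-rank bundle (the locus $\AD[2n-2]{\nabla}$), not to the deeper degeneracy loci $\D[2k]{\ps}$, whose actual codimension $d-2k-2$ is far below the expected Pfaffian codimension $\binom{d-2k-1}{2}$; precisely because these loci are so non-generic, the standard determinantal/Pfaffian Cohen--Macaulayness results cannot be invoked, and there is no ``three-term resolution for the inclusion $\Sec[k-1]{\X}\subset\Sec[k]{\X}$'' in the theory developed in the paper. The degeneration to a nodal cubic and the appeal to Sklyanin algebras are interesting directions but are stated without any detail that would control embedded primes or allow transfer of reducedness to nearby fibres (reducedness is not automatically open in flat families without further hypotheses). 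So as it stands your text reproduces, in expanded form, the kind of evidence the paper already gives, together with a program whose key steps (conormal transversality of $j^1\ps^{k+1}$ and Cohen--Macaulayness of $\D[2k]{\ps}$) remain open; it does not settle the conjecture.
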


\begin{proof}[Evidence]
We focus on the case of Poisson structures on $\Prj{}(\cohlgy[1]{\X,\sL^\vee})$ for $\sL$ a line bundle of degree $d$.

According to \autoref{prop:res-nonvan}, the modular residue on every degeneracy locus $\D[2k]{\ps}$ is non-trivial.   This is a sort of genericity condition on the one-jet of $\ps^{k+1}$, and hence it may be reasonable to expect that $\ps^{k+1}$ vanishes transversally.  For $d=2n+1$ an odd number, the degeneracy divisor $\D[2n-2]{\ps}$ is always reduced, as we saw in \autoref{thm:sec-sing}.  In general, though, the non-triviality of the residue is not sufficient to conclude that $\D[2k]{\ps}$ is reduced. 

We also have explicit knowledge in low dimensions: for $d=3$, the zero locus is the smooth cubic curve $\X \subset \Prj{2}$, which is the reduced anti-canonical divisor.

For $d=4$, the formulae in \cite{Odesskii2002,Ortenzi2011} show that the two-dimensional symplectic leaves form a pencil of quadrics in $\Prj{3}$ intersecting in a smooth elliptic curve.  The zero scheme of the Poisson structure is reduced and consists of the elliptic curve, together with four isolated points~\cite{Polishchuk1997}.

For $d=5$, we checked using \texttt{Macaulay2}~\cite{M2} and the formulae in~\cite{Odesskii2002,Ortenzi2011} that the degeneracy loci are reduced.
\end{proof}

\bibliographystyle{hyperamsplain}
\bibliography{bondal4d}

\end{document}